\definecolor{darkblue}{rgb}{0.0,0,0.7} 
\definecolor{darkred}{rgb}{0.7,0,0} 
\newcommand{\darkred}{\color{darkred}} 
\newcommand{\defn}[1]{\emph{\darkred #1}}
\def\S{{\mathcal{S}}}
\def\R{{\mathcal{R}}}
\def\T{{\mathcal{T}}}
\def\Div{{\sf{Div}}}
\def\S{{\mathcal{S}}}
\newtheorem{theorem}{Theorem}[section]
\newtheorem{prop}[theorem]{Proposition}
\newtheorem{lemma}[theorem]{Lemma}
\newtheorem{cor}[theorem]{Corollary}
\newtheorem{conjecture}[theorem]{Conjecture}
\newtheorem{nota}[theorem]{Notation}
\theoremstyle{definition}
\newtheorem{definition}[theorem]{Definition}
\newtheorem{rmq}[theorem]{Remark}
\newtheorem{exple}[theorem]{Example}
\newtheorem{question}[theorem]{Question}
\numberwithin{equation}{section}
\title[On torus knot groups and a submonoid of the braid group]{On torus knot groups and a submonoid of the braid group}
\author{Thomas Gobet}
\address{Institut Denis Poisson, CNRS UMR 7350, Facult\'e des Sciences et Techniques, Universit\'e de Tours, Parc de Grandmont, 
37200 TOURS, France}
\email{thomas.gobet@lmpt.univ-tours.fr}
\begin{document}
\maketitle

\bigskip

\begin{center}
\textit{Dedicated to the memory of Patrick Dehornoy.
}\end{center}

\bigskip

\begin{abstract}
The submonoid of the $3$-strand braid group~$\mathcal{B}_3$ generated by $\sigma_1$ and $\sigma_1 \sigma_2$ is known to yield an exotic Garside structure on $\mathcal{B}_3$. We introduce and study an infinite family~$(M_n)_{n\geq 1}$ of Garside monoids generalizing this exotic Garside structure, \emph{i.e.}, such that $M_2$ is isomorphic to the above monoid. The corresponding Garside group~$G(M_n)$ is isomorphic to the~$(n,n+1)$-torus knot group--which is isomorphic to~$\mathcal{B}_3$ for $n=2$ and to the braid group of the exceptional complex reflection group~$G_{12}$ for $n=3$. This yields a new Garside structure on $(n,n+1)$-torus knot groups, which already admit several distinct Garside structures. 

The $(n,n+1)$-torus knot group is an extension of $\mathcal{B}_{n+1}$, and the Garside monoid~$M_n$ surjects onto the submonoid~$\Sigma_n$ of $\mathcal{B}_{n+1}$ generated by $\sigma_1, \sigma_1 \sigma_2, \dots, \sigma_1 \sigma_2\cdots \sigma_n$, which is \textit{not} a Garside monoid when $n>2$. Using a new presentation of $\mathcal{B}_{n+1}$ that is similar to the presentation of $G(M_n)$, we nevertheless check that $\Sigma_n$ is an Ore monoid with group of fractions isomorphic to $\mathcal{B}_{n+1}$, and give a conjectural presentation of it, similar to the defining presentation of $M_n$. This partially answers a question of Dehornoy--Digne--Godelle--Krammer--Michel.  
\end{abstract}

\tableofcontents

\section{Introduction}

\thispagestyle{empty}

The braid group on $n$ strands is one of the most basic example of a Garside group. Garside groups, originally introduced by~Dehornoy and Paris~\cite{DP} following an original idea of Garside~\cite{garside_69}, are defined as groups of fractions of certain monoids, called \textit{Garside monoids}, which have enough properties to ensure that every element of the group can be written uniquely as an irreducible fraction in two elements of the monoid. Computable normal forms for elements of these monoids can be defined, allowing one to effectively compute such fractions, which in particular yields a solution to the word problem in these groups. Garside groups also have many other properties. For example, they are torsion-free, and have a solvable conjugacy problem--see Section~\ref{sec:garside} below for basic definitions and properties of Garside monoids and groups, and~\cite{Garside} for more on the topic.  

While the word problem in the $n$-strand braid group has been known to be solvable since Artin's original paper~\cite{Artin} and several other approaches have been shown to be fruitful in between (see~\cite[Section~5]{BB} for a survey), Garside's approach allowed him to get the first solution to the conjugacy problem, and his results were generalized to get a uniform solution to these questions in Artin--Tits groups of spherical type~\cite{BS, Del}, \emph{i.e.}, Artin--Tits groups attached to finite Coxeter groups (see~\cite[Section~6.6]{KT} for an introduction to the topic). It also provides new proofs that Artin--Tits groups of spherical type are torsion-free, and allows one to determine their center. One can also note that Garside normal forms can be used to show faithfulness of (linear, and more recently categorical) representations of Garside groups~\cite{Krammer, BT, Jensen, LQ}. Roughly speaking, Garside groups are groups satisfying a set of axioms that ensures that generalizations of the techniques of Garside can be applied to solve the above-mentioned problems. 

In general, the Garside group does not determine an associated Garside monoid, \emph{i.e.}, several non-isomorphic Garside monoids may have isomorphic group of fractions (see~\cite[Section~6.4, Problem~10]{Dual}). Up to now, it seems that very few classification results of Garside monoids for a given Garside group are known. In the case of the $n$-strand braid group, Garside's original paper yields a so-called \textit{classical} Garside monoid, which is nothing but the positive braid monoid, while Birman, Ko, and~Lee~\cite{BKL} discovered a second Garside monoid, which strictly contains the first one. This Garside monoid is generated by a copy of the set of transpositions of the symmetric group. Bessis, Digne, and~Michel~\cite{BDM} generalized this monoid to Artin--Tits groups of Coxeter type $B_n$, and then Bessis gave a generalization of these constructions, called \textit{dual braid monoid}, which is valid for every Artin--Tits group attached to a finite Coxeter system~\cite{Dual}, and even to braid groups of well-generated complex reflection groups~\cite{Dualcplx}. Following Bessis' approach, some Artin--Tits groups of non-spherical type were also shown to be (quasi-)Garside groups~\cite{Dig, Dig1, Bessis_free}.  

Birman and Brendle asked if there exist other Garside monoids for the $n$-strand braid group (see~\cite[Open Problem~10]{BB}, where it is also claimed that it is very likely that the classical and dual presentations of $\mathcal{B}_{n+1}$ are the only presentations yielding a Garside monoid). There are several motivations for looking for other Garside presentations of $\mathcal{B}_n$. In addition to classification perspectives, one can cite for instance the look for a polynomial algorithm for the conjugacy problem. At the time of writing of this paper, it seems that the only known Garside monoids which can be defined for the $n$-strand braid group for all $n\geq 1$ are still the classical and the dual braid monoids. 

Nevertheless, for $n=3$, several exotic Garside monoids for the $3$-strand braid group~$\mathcal{B}_3$ were discovered (see~\cite[Section~IX.2.4]{Garside} for a survey). Two of them are given by the following presentations : 
\[\langle~ x, y \ \vert\ 
\begin{matrix}
x^2=y^3
\end{matrix}
\ \rangle\text{~~and~~} \langle~ a, b \ \vert\ 
\begin{matrix}
aba=b^2
\end{matrix}
\ \rangle\]
It is natural to wonder whether these monoids admit analogues in higher rank or if they should be considered as some sort of sporadic monoids only arising in low rank. For the first one, one can answer this question as follows: this presentation is in fact a presentation of the~\emph{torus knot group} of the torus knot~$T_{2,3}$: given $n,m$ two relatively prime integers, the torus knot group $G(n,m)$ is the fundamental group of the complement of the torus knot $T_{n,m}$. It has a presentation with two generators $x,y$ and a single relation $x^n=y^m$, and this presentation is known to yield a Garside monoid (see~\cite[Example~4]{DP}). One has an isomorphism $\mathcal{B}_3\cong G(2,3)$, while in general for $m=n+1$ one only has a surjection $G(n,n+1)\twoheadrightarrow \mathcal{B}_{n+1}$. Note that several other Garside structures for $G(n,m)$ are known (see Section~\ref{garside_torus} below). In this paper, we investigate the question for the second above-mentioned exotic Garside structure on $\mathcal{B}_3$. In terms of the classical generators, one has $a=\sigma_1$, $b=\sigma_1 \sigma_2$. It was the first example of a Garside monoid where the lcm of the atoms is not equal to the Garside element (see~\cite[Exemple 1.5]{dehornoy_garside}). Indeed, in this Garside monoid, the left-lcm of $a$ and $b$ is $b^2$, while the Garside element $\Delta$ is $b^3$ (the lattice of divisors of $\Delta$ under left-divisibility is given in Figure~\ref{fig_intro}). In fact, in the original paper~\cite{DP}, it was a requirement for the Garside element $\Delta$ to be the lcm of the atoms, but this condition was slightly relaxed in~\cite{dehornoy_garside}, and is not required anymore in the definition of Garside monoid which is used nowadays.

\begin{figure}
\begin{pspicture}(-2,-0.3)(3.5,5)
\psdots(1.5,0)(0.5,0.7)(2.5,1.4)(0.5,2.1)(2.5,2.1)(0.5,2.8)(2.5,3.5)(1.5,4.2)
\psline(1.5,0)(0.5,0.7)
\psline(0.5,0.7)(0.5,2.8)
\psline(1.5,0)(2.5,1.4)
\psline(2.5,1.4)(0.5,2.8)
\psline(2.5,1.4)(2.5,3.5)
\psline(2.5,3.5)(1.5,4.2)
\psline(0.5,2.8)(1.5,4.2)
\rput(1,-0.1){$1$}
\rput(0.15,0.7){$a$}
\rput(0,2.1){$ab$}
\rput(-0.5,2.8){$b^2=aba$}
\rput(-0.25,4.2){$b^3=abab=baba$}
\rput(2.9,1.4){$b$}
\rput(3,2.1){$ba$}
\rput(3.1,3.5){$bab$}
\end{pspicture}
\caption{The lattice of simples in the submonoid of $\mathcal{B}_3$ generated by $a=\sigma_1$ and $b=\sigma_1\sigma_2$.}
\label{fig_intro}
\end{figure}    

The submonoid of $\mathcal{B}_3$ mentioned above admits a natural generalization to $\mathcal{B}_{n+1}$, $n\geq 2$, given by the submonoid $\Sigma_{n}\subseteq \mathcal{B}_{n+1}$ generated by $\sigma_1, \sigma_1 \sigma_2, \dots, \sigma_1 \sigma_2 \cdots \sigma_n$. In~\cite[Chapter~IX, Question~30]{Garside}, the following question is raised:

\begin{question}\label{q1}
Does the submonoid $\Sigma_n$ admit a finite presentation ? Is it a Garside monoid ?
\end{question}

A positive answer to the last question would in particular yield a new Garside structure on $\mathcal{B}_{n+1}$, generalizing the exotic Garside structure given by $\Sigma_2$ on $\mathcal{B}_3$. Unfortunately, the submonoid $\Sigma_n$ is \textit{not} a Garside monoid when $n>3$: in fact, as already noticed by~Dehornoy before Question~\ref{q1} was asked, this monoid does not have lcm's (as follows easily from~\cite[Example~3.7]{subword}: there it is shown that a monoid conjecturally isomorphic to the opposite monoid of $\Sigma_3$ does not have lcm's, and the same argument can be given for $\Sigma_3$). But we shall show that the $(n,n+1)$-torus knot group $G(n,n+1)$ admits a Garside structure generalizing the above mentioned exotic Garside structure, and having as image the submonoid $\Sigma_{n}$. In other words, the above-mentioned exotic Garside monoid admits a generalization $M_n$, which has as group of fractions an extension of the braid group, isomorphic to it in low ranks.  

Let us now define our main object of study. Let $n\geq 1$ and let $M_n$ be the monoid defined by the presentation \[\langle~ \rho_1, \rho_2, \dots, \rho_n \ \vert\ 
\begin{matrix}
\rho_1 \rho_n \rho_i=\rho_{i+1} \rho_n\text{~for~}1\leq i \leq n-1
\end{matrix} \ \rangle\] Then our main results can be summarized as follows (see Theorem~\ref{garside_gn}, Propositions~\ref{prop:surjective} and \ref{prop:iso}, and Corollary~\ref{lcm_s} below)

\begin{theorem}\label{main_intro}
We have \begin{enumerate} \item The monoid $M_n$ is a Garside monoid, with (central) Garside element $\Delta=\rho_n^{n+1}$, and (left- or right-) lcm of the atoms $\rho_n^n$. 
\item The Garside group $G(M_n)$ obtained as group of fractions of $M_n$ is isomorphic to the $(n,n+1)$-torus knot group. In particular for $n=1$ and $n=2$ we have $G(M_n) \cong \mathcal{B}_{n+1}$, while for $n>2$ it is a proper extension of $\mathcal{B}_{n+1}$. 
\item The image of $M_n$ in $\mathcal{B}_{n+1}$ under the above-mentioned surjection onto $\mathcal{B}_{n+1}$ is the submonoid $\Sigma_n$. In particular $M_2=\Sigma_2$ holds.  
\end{enumerate}
\end{theorem}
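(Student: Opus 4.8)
The plan is to first identify the group $G(M_n)$ by Tietze transformations (Part~(2)), then establish the Garside structure of $M_n$ (Part~(1)) --- the substantial step --- and finally pin down the image of $M_n$ in $\mathcal{B}_{n+1}$ (Part~(3)); I expect the main obstacle to be the cancellativity and lattice property of $M_n$ needed for Part~(1).

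\textbf{Part~(2): the group of fractions.} In $G(M_n)$ the defining relations can be rewritten as $\rho_{i+1}=\rho_1\rho_n\rho_i\rho_n\inv$, so a short induction gives $\rho_i=(\rho_1\rho_n)^{i-1}\rho_1\rho_n^{-(i-1)}$. Using the relations with $1\le i\le n-2$ to eliminate $\rho_2,\dots,\rho_{n-1}$ one after another by Tietze transformations leaves a presentation on the two generators $\rho_1,\rho_n$ with the single relation coming from the case $i=n-1$, namely $(\rho_1\rho_n)^{n-1}\rho_1=\rho_n^{\,n}$. Putting $v:=\rho_n$, $u:=\rho_1\rho_n$ (so $\rho_1=uv\inv$) turns this into $u^n=v^{n+1}$, whence $G(M_n)\cong\langle u,v\mid u^n=v^{n+1}\rangle$, the standard presentation of the $(n,n+1)$-torus knot group; under this identification $\Delta=\rho_n^{\,n+1}=v^{n+1}=u^n$ generates the (infinite cyclic) center. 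That $G(M_n)\cong\mathcal{B}_{n+1}$ for $n\le 2$, and that for $n>2$ the map onto $\mathcal{B}_{n+1}$ is a proper quotient, are classical facts about torus knot groups (here $G(n,n+1)/\langle\Delta\rangle\cong\BZ/n\ast\BZ/(n+1)$).

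\textbf{Part~(1): the Garside structure, the main point.} Assigning the weight $i$ to $\rho_i$ makes every defining relation weight-homogeneous (both sides have weight $n+1+i$), so $M_n$ is atomic and Noetherian, its atoms being exactly $\rho_1,\dots,\rho_n$ (a routine check). The bookkeeping then rests on a few identities, each proved by an immediate induction from the relations:
\[\rho_i\rho_n^{\,i}=(\rho_1\rho_n)^i\qquad\text{and}\qquad\rho_n^{\,k}=(\rho_1\rho_n)^{k-1}\rho_{n-k+1}\qquad(1\le i,k\le n).\]
In particular $\rho_n^{\,n+1}=(\rho_1\rho_n)^n$, every atom both left- and right-divides $\Delta:=\rho_n^{\,n+1}$, and combining the two identities gives $\rho_i\Delta=(\rho_1\rho_n)^i\rho_n^{\,n+1-i}=(\rho_1\rho_n)^n\rho_i=\Delta\rho_i$, so $\Delta$ is central. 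Hence the left-divisors of $\Delta$ coincide with its right-divisors, this set contains all atoms (so it generates $M_n$), and it is finite, since a divisor of $\Delta$ has weight at most $n(n+1)$ and only finitely many elements have weight $\le n(n+1)$. It remains to show that $M_n$ is left- and right-cancellative and that any two elements have a left-lcm and a right-lcm; granting this, all the axioms of a Garside monoid are in place, establishing~(1) with Garside element $\Delta$, and Corollary~\ref{lcm_s} follows, $\rho_n^{\,n}=(\rho_1\rho_n)^{n-1}\rho_1$ being a common left- and right-multiple of all atoms while the divisor lattice shows that no proper divisor of $\rho_n^{\,n}$ is. I expect cancellativity and the lattice property to be the real obstacle: the defining presentation, although complemented, is not complete for Dehornoy's subword reversing, since for atoms $\rho_j,\rho_k$ with $2\le j<k$ no defining relation relates a word beginning with $\rho_j$ to one beginning with $\rho_k$, although the identities above do exhibit a common right-multiple, for instance $(\rho_1\rho_n)^k=\rho_j\rho_n^{\,j}(\rho_1\rho_n)^{k-j}=\rho_k\rho_n^{\,k}$. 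One must therefore first complete the presentation --- equivalently, produce a terminating confluent rewriting system, or describe explicitly the finite divisor lattice of $\Delta$ (generalizing Figure~\ref{fig_intro}) and appeal to the theory of Garside germs --- and then verify the cube condition on triples of generators; this completion, with its attendant case analysis, is what I anticipate to be the most delicate point.

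\textbf{Part~(3): the image in $\mathcal{B}_{n+1}$.} Write $\delta_i:=\sigma_1\sigma_2\cdots\sigma_i$, so that $\Sigma_n=\langle\delta_1,\dots,\delta_n\rangle$ by definition. I claim $\rho_i\mapsto\delta_i$ defines a monoid homomorphism $\psi\colon M_n\to\mathcal{B}_{n+1}$: the identities $\delta_1\delta_n\delta_i=\delta_{i+1}\delta_n$ ($1\le i\le n-1$) follow from the familiar braid identity $\delta_n\sigma_j=\sigma_{j+1}\delta_n$ ($1\le j\le n-1$), which gives $\delta_n\delta_i=(\sigma_2\sigma_3\cdots\sigma_{i+1})\delta_n$ and hence $\delta_1\delta_n\delta_i=\sigma_1(\sigma_2\cdots\sigma_{i+1})\delta_n=\delta_{i+1}\delta_n$. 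As the $\delta_i$ generate $\Sigma_n$, the image of $\psi$ is exactly $\Sigma_n$; the induced homomorphism $\hat\psi\colon G(M_n)\to\mathcal{B}_{n+1}$ is, in the coordinates of Part~(2), $u\mapsto\sigma_1\delta_n$, $v\mapsto\delta_n$, which is onto $\mathcal{B}_{n+1}$ (since $\sigma_1$ and $\delta_n$ generate it) and is the surjection $G(n,n+1)\twoheadrightarrow\mathcal{B}_{n+1}$ of the statement (consistently, $(\sigma_1\delta_n)^n=\delta_n^{\,n+1}$, the full twist). For $n=2$, $\hat\psi$ is a surjection $G(2,3)\to\mathcal{B}_3$ between groups already known to be isomorphic, hence an isomorphism since $\mathcal{B}_3$ is Hopfian; composing with the embedding $M_2\hookrightarrow G(M_2)$ furnished by Part~(1) shows $\psi|_{M_2}$ is injective, so $M_2=\Sigma_2$.
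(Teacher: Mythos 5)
Your Parts~(2) and~(3) are essentially correct and complete: the Tietze elimination of $\rho_2,\dots,\rho_{n-1}$ leading to $u^n=v^{n+1}$ is a legitimate (and slightly more systematic) alternative to the paper's direct verification that $\rho_i\mapsto x^iy^{-i}$ is an isomorphism (Proposition~\ref{prop_isom_torus}), and your use of the identity $\delta_n\sigma_j=\sigma_{j+1}\delta_n$ streamlines the paper's inductive proof of Proposition~\ref{prop:surjective}. The preparatory identities in Part~(1) ($\rho_i\rho_n^i=(\rho_1\rho_n)^i$, $\rho_n^k=(\rho_1\rho_n)^{k-1}\rho_{n-k+1}$, centrality of $\Delta$, finiteness of $\Div(\Delta)$) are also correct and match Proposition~\ref{garside_divisors}.

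However, there is a genuine gap at exactly the point you flag as "the real obstacle": you never prove that $M_n$ is cancellative or that lcms exist, and this is the substantive content of Part~(1). Saying that "one must first complete the presentation \dots and then verify the cube condition" describes the task without performing it. The paper carries this out as follows. It introduces \emph{two} different redundant presentations: $\langle\S,\R'\rangle$ with relations $\rho_i\rho_n^i\rho_{j-i}=\rho_j\rho_n^i$ for $i<j$ (Lemma~\ref{rel_bis}), which is right-complemented and satisfies the \emph{sharp} $\theta$-cube condition on all triples of distinct atoms (Lemma~\ref{cube_right}), yielding left-cancellativity and conditional right-lcms; and, because the defining presentation is not symmetric for $n\geq 3$, a second presentation $\langle\S,\R''\rangle$ with relations $(\rho_1\rho_n)^{n-j+1}\rho_i=\rho_{n-j+i+1}(\rho_1\rho_n)^{n-j}\rho_j$ (Lemma~\ref{rel_bis_2}) whose \emph{opposite} is right-complemented and satisfies the cube condition (Lemma~\ref{cube_right_bis}), yielding right-cancellativity and conditional left-lcms. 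Conditional lcms are then upgraded to genuine lcms via Lemma~\ref{cond_delta_lcm} using the Garside element. Note also that the common right-multiple you exhibit for $\rho_j,\rho_k$ ($j<k$), namely $\rho_j\rho_n^j(\rho_1\rho_n)^{k-j}=\rho_k\rho_n^k$, is \emph{not} the right-lcm (the correct completion uses $\rho_j\rho_n^j\rho_{k-j}=\rho_k\rho_n^j$, a proper left-divisor of yours); feeding the non-minimal relations into the reversing machinery would cause the cube condition of Proposition~\ref{cancellative_criterion} to fail, so identifying the correct complement is part of the work you have deferred. Until this is done, neither the Garside structure nor the claim that $\rho_n^n$ is the lcm of the atoms (Corollary~\ref{lcm_s}, which the paper derives from the explicit formulas $\theta(\rho_i,\rho_j)=\rho_n^i\rho_{j-i}$ and Corollaries~\ref{cor:lcm} and~\ref{cor:lcm_left}) is established.
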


The center of $G(M_n)$ (which is known to be infinite cyclic) is generated by the Garside element $\Delta=\rho_n^{n+1}$ for~$n\geq 2$ and by $\rho_1$ for~$n=1$.

It is known that the $(3,4)$-torus knot group is isomorphic to the braid group of the complex reflection group $G_{12}$. Irreducible complex reflection groups which are well-generated admit a so-called \textit{dual braid monoid} by work of Bessis~\cite{Dualcplx}. The complex reflection group $G_{12}$ is not well-generated but since it is isomorphic to the $(3,4)$-torus knot group, it admits several Garside structures, including, in some sense, a classical and a dual one (see Section~\ref{garside_torus} below). The above theorem specialized at $n=3$ yields an additional Garside structure for its braid group (and a new presentation of $G_{12}$ can be derived). Note that, at the time of writing, the only irreducible complex reflection group for which is it not known whether the corresponding braid group is a Garside group or not is $G_{31}$ (see Remark~\ref{rmq:cplx_gps} below). 

\medskip

Coming back to Question~\ref{q1}, one can define a presentation of the braid group~$\mathcal{B}_{n+1}$ which is closely related to that of $M_n$. Let $\mathcal{H}_n^+$ be the quotient of $M_n$ defined by the presentation \begin{equation}\label{hn} \langle~ \rho_1, \rho_2, \dots, \rho_n \ \vert\ \rho_1 \rho_j \rho_i=\rho_{i+1} \rho_j\text{~for~}1\leq i < j \leq n~ \rangle\end{equation} 
Then we show (see Propositions~\ref{pres_bn} and \ref{prop_ore})

\begin{prop}
We have
\begin{enumerate}
\item The submonoid $\Sigma_n$ of $\mathcal{B}_{n+1}$ is an Ore monoid with group of fractions isomorphic to $\mathcal{B}_{n+1}$.
\item The group with presentation~\ref{hn} is isomorphic to $\mathcal{B}_{n+1}$ via $\rho_i\mapsto \sigma_1 \sigma_2\cdots \sigma_i$. The image of $\mathcal{H}_{n}^+$ inside $\mathcal{B}_{n+1}$ is $\Sigma_n$. 
\end{enumerate}
\end{prop}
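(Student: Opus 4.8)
The plan is to establish part~(2) first --- that presentation~\ref{hn} defines $\mathcal{B}_{n+1}$ --- and then to deduce part~(1) by a short computation carried out directly inside $\Sigma_n$.

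\emph{Part (2).} Write $\mathcal{H}_n$ for the group presented by~\ref{hn} and set $\rho_0:=1$. I would build two mutually inverse homomorphisms. The assignment $\rho_i\mapsto\sigma_1\sigma_2\cdots\sigma_i$ extends to a homomorphism $\phi\colon\mathcal{H}_n\to\mathcal{B}_{n+1}$: the only point is that the relations $\rho_1\rho_j\rho_i=\rho_{i+1}\rho_j$ ($1\le i<j\le n$) hold in $\mathcal{B}_{n+1}$, and this follows from the shift identity $(\sigma_1\cdots\sigma_j)\,\sigma_i\,(\sigma_1\cdots\sigma_j)^{-1}=\sigma_{i+1}$ for $i<j$ (an easy induction on $j$ using the braid relations), since then $\rho_j\rho_i\rho_j^{-1}=\sigma_2\sigma_3\cdots\sigma_{i+1}=\rho_1^{-1}\rho_{i+1}$. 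As $\sigma_i=\rho_{i-1}^{-1}\rho_i$ lies in the image, $\phi$ is surjective. Conversely, $\sigma_i\mapsto\rho_{i-1}^{-1}\rho_i$ should extend to a homomorphism $\psi\colon\mathcal{B}_{n+1}\to\mathcal{H}_n$, and here one must verify that the elements $s_i:=\rho_{i-1}^{-1}\rho_i$ of $\mathcal{H}_n$ satisfy the braid relations. Rewriting the defining relations as $\rho_j\rho_i\rho_j^{-1}=\rho_1^{-1}\rho_{i+1}$ ($i<j$) and reading off the conjugates $\rho_j\rho_i\rho_j^{\pm1}$, both $s_is_{i+1}s_i=s_{i+1}s_is_{i+1}$ and $s_is_j=s_js_i$ ($|i-j|\ge2$) should collapse to instances of the defining relations --- for instance $s_1s_2s_1=s_2s_1s_2$ is exactly $\rho_1\rho_2\rho_1=\rho_2^2$, and $s_1s_3=s_3s_1$ amounts to $\rho_2\rho_1\rho_2^{-1}=\rho_3\rho_1\rho_3^{-1}$, both sides being $\rho_1^{-1}\rho_2$. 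Then $\psi\circ\phi$ and $\phi\circ\psi$ are the identity on generators, so $\phi$ is an isomorphism; and by construction $\phi$ carries the submonoid $\mathcal{H}_n^+$ generated by $\rho_1,\dots,\rho_n$ onto the submonoid of $\mathcal{B}_{n+1}$ generated by $\sigma_1,\sigma_1\sigma_2,\dots,\sigma_1\cdots\sigma_n$, which is $\Sigma_n$. (One could instead reach presentation~\ref{hn} by a Tietze transformation of the Artin presentation, adjoining $\rho_i=\sigma_1\cdots\sigma_i$ and eliminating the $\sigma_i$; this involves essentially the same manipulations.)

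\emph{Part (1).} The monoid $\Sigma_n$ is a submonoid of the group $\mathcal{B}_{n+1}$, hence cancellative, so it remains to check that any two elements admit a common right multiple. The relevant element is $z:=\rho_n^{n+1}=(\sigma_1\cdots\sigma_n)^{n+1}$, which lies in $\Sigma_n$ and is central in $\mathcal{B}_{n+1}$ (it is the full twist). The relations $\rho_1\rho_n\rho_i=\rho_{i+1}\rho_n$ ($1\le i\le n-1$), which hold in $\Sigma_n$ by the shift identity above, give by induction on $k$ the identity $\rho_n^{\,k}=(\rho_1\rho_n)^{k-1}\rho_{n-k+1}$ for $1\le k\le n$; taking $k=n-i+1$ yields $z=\rho_n^{\,i}(\rho_1\rho_n)^{\,n-i}\rho_i$, so $\rho_i$ right-divides $z$ in $\Sigma_n$, and since $z$ is central also $\rho_i^{-1}z=z\rho_i^{-1}\in\Sigma_n$, i.e.\ $z=\rho_iv_i$ with $v_i\in\Sigma_n$. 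An induction on the length $\ell$ of $g=\rho_ih\in\Sigma_n$ (as a word in the $\rho_i$) then shows that $g$ left-divides $z^{\ell}$ in $\Sigma_n$: indeed $g^{-1}z^{\ell}=h^{-1}(\rho_i^{-1}z)z^{\ell-1}=h^{-1}v_iz^{\ell-1}=h^{-1}z^{\ell-1}v_i$, which lies in $\Sigma_n$ because $h^{-1}z^{\ell-1}\in\Sigma_n$ by the inductive hypothesis. Hence any two elements of $\Sigma_n$ of lengths $\ell,m$ have $z^{\max(\ell,m)}$ as a common right multiple, and by centrality of $z$ also a common left multiple, so $\Sigma_n$ is an Ore monoid. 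Its group of fractions, formed inside $\mathcal{B}_{n+1}$, is $\Sigma_n\Sigma_n^{-1}$, which contains every $\sigma_i=\rho_{i-1}^{-1}\rho_i$ and is therefore equal to $\mathcal{B}_{n+1}$.

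The step I expect to be the main obstacle is the well-definedness of $\psi$ in part~(2): one must organize the conjugation identities derivable from presentation~\ref{hn} carefully enough to verify the braid relations among the $s_i=\rho_{i-1}^{-1}\rho_i$ uniformly in the indices, rather than just in the low-rank cases displayed above. Once part~(2) is in hand, part~(1) is a short bootstrap from the centrality of the full twist together with the single combinatorial identity $\rho_n^{\,k}=(\rho_1\rho_n)^{k-1}\rho_{n-k+1}$.
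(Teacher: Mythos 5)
Your overall architecture matches the paper's: part (2) via mutually inverse homomorphisms $\phi\colon\rho_i\mapsto\sigma_1\cdots\sigma_i$ and $\psi\colon\sigma_i\mapsto\rho_{i-1}^{-1}\rho_i$, and part (1) via cancellativity (submonoid of a group) plus common multiples. Part (1) as you wrote it is complete and correct, and takes a genuinely more self-contained route than the paper: you manufacture common multiples directly inside $\Sigma_n$ from the central full twist $\rho_n^{n+1}$, the identity $\rho_n^{\,n-i+1}=(\rho_1\rho_n)^{n-i}\rho_i$, and an induction on word length, whereas the paper (Lemma~\ref{lem:ore}) gets them for free from the fact that $\Sigma_n$ is a quotient of the Garside monoid $M_n$, which has lcm's. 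Your version does not presuppose the Garside structure of $M_n$; the paper's is shorter once that structure is available.

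The gap is exactly where you predicted it: the well-definedness of $\psi$. The two instances you check, $s_1s_2s_1=s_2s_1s_2$ and $s_1s_3=s_3s_1$, are degenerate because $s_1=\rho_1$ is itself a generator (as $\rho_0=1$), so each collapses to a single defining relation; this does not happen for general indices. Your conjugation formula $\rho_j\rho_i\rho_j^{-1}=\rho_1^{-1}\rho_{i+1}$ ($i<j$) does dispatch all the commutation relations $s_is_j=s_js_i$, $i<j-1$ (both $\rho_{j-1}$ and $\rho_j$ conjugate $s_i$ to $\rho_i^{-1}\rho_{i+1}$), but it does not reach the braid relation $s_is_{i+1}s_i=s_{i+1}s_is_{i+1}$ for $i\geq 2$: after cancellation that relation reads $\rho_{i-1}^{-1}\rho_{i+1}\rho_{i-1}^{-1}\rho_i=\rho_i^{-1}\rho_{i+1}\rho_{i-1}^{-1}\rho_{i+1}$, and verifying it requires conjugating higher-index generators by the \emph{lower}-index $\rho_{i-1}$, which your formula does not cover. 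The paper's proof of Proposition~\ref{pres_bn} closes this by combining the three relations $\rho_1\rho_i\rho_{i-1}=\rho_i^2$, $\rho_1\rho_{i+1}\rho_i=\rho_{i+1}^2$ and $\rho_1\rho_{i+1}\rho_{i-1}=\rho_i\rho_{i+1}$; note the last one has index gap $2$ and is invisible (trivial) in your $i=1$ examples. The missing step is fillable, but it is the computational heart of part (2) and has to be written out.
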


We then conjecture the following (see Conjecture~\ref{conj} below for a more precise statement)

\begin{conjecture}
The monoid $\mathcal{H}_n^+$ is cancellative. As a corollary, we have $\mathcal{H}_n^+\cong \Sigma_n$, and $\Sigma_n$ admits a finite presentation. 
\end{conjecture}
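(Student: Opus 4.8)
The plan is to reduce the conjecture to its first assertion --- that $\mathcal{H}_n^+$ is cancellative --- and then to attack that by word reversing. The reduction is cheap: $\mathcal{H}_n^+$ is by construction a quotient of $M_n$, and $M_n$ is a Garside monoid by Theorem~\ref{main_intro}, so any two of its elements admit a common left multiple and a common right multiple; this common-multiple property passes to the surjective image $\mathcal{H}_n^+$ (given $\bar a,\bar b\in\mathcal{H}_n^+$, lift to $a,b\in M_n$, pick $c,d\in M_n$ with $ac=bd$, and project). Hence, \emph{if} $\mathcal{H}_n^+$ is cancellative, it satisfies both Ore conditions, so by Ore's theorem it embeds in its group of fractions $G$; and $G$ is the group presented by~\ref{hn}, which equals $\mathcal{B}_{n+1}$ via $\rho_i\mapsto\sigma_1\sigma_2\cdots\sigma_i$ by Proposition~\ref{pres_bn}, the image of $\mathcal{H}_n^+$ under this identification being precisely $\Sigma_n$ by the same proposition. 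So the embedding $\mathcal{H}_n^+\hookrightarrow\mathcal{B}_{n+1}$ has image $\Sigma_n$, whence $\mathcal{H}_n^+\cong\Sigma_n$; and since~\ref{hn} is a finite presentation, $\Sigma_n$ inherits one. (Conversely $\Sigma_n$, being a submonoid of a group, is cancellative, so the conjecture is in fact \emph{equivalent} to cancellativity of $\mathcal{H}_n^+$.)

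To prove cancellativity I would run Dehornoy's subword reversing on the presentation~\ref{hn} (see \cite[Chapter~II]{Garside}, and \cite{dehornoy_garside,subword}): showing that~\ref{hn} is complete for both left and for right reversing yields that $\mathcal{H}_n^+$ is left- and right-cancellative, completeness being verified through a local (cube-type) confluence condition on triples of atoms --- one compares the two orders in which a suitable length-three word in the $\rho_i^{\pm1}$ can be reversed and checks that the results agree. Since the defining relations $\rho_1\rho_j\rho_i=\rho_{i+1}\rho_j$ ($1\le i<j\le n$) are short and rigidly shaped, these are finite verifications, and I would expect them to organise into an argument uniform in $n$; the sharper Conjecture~\ref{conj} presumably records the resulting normal form and/or the derived relations produced by the reversing. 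Note that termination of the reversing is \emph{not} needed here: common multiples in $\mathcal{H}_n^+$ are already supplied by the surjection from $M_n$, so completeness alone --- hence cancellativity --- is all that this step must deliver.

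The hard part is that~\ref{hn} is not right complemented once $n\ge 3$: for the ordered pair $(\rho_1,\rho_2)$ there are $n-1$ distinct relations $\rho_1\rho_j\rho_1=\rho_2\rho_j$ with $2\le j\le n$, so the reversing step starting from $\rho_1^{-1}\rho_2$ is genuinely multivalued, and the textbook completeness criterion for complemented presentations does not apply verbatim. I see two routes, which a working proof may well have to combine: either carry out the reversing in the framework for arbitrary (non-complemented) presentations, with the appropriate completeness condition, or first replace~\ref{hn} by an auxiliary \emph{complemented} presentation of $\mathcal{H}_n^+$ --- adjoining generators for the relevant right complements --- establish completeness there, and transfer cancellativity back along the comparison map. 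In either case the delicate point is the completeness (confluence) verification itself rather than any single relation; and because reversing the two sides of a relation does not send~\ref{hn} to an equivalent presentation, the left-reversing and right-reversing analyses must be carried out independently --- the opposite monoid of $\mathcal{H}_n^+$ is already known to behave differently (it lacks lcm's, by \cite[Example~3.7]{subword}), so this asymmetry is real.
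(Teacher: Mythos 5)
The statement you are asked to prove is stated as a \emph{conjecture} in the paper, and the paper itself offers no proof of it: the only thing it establishes (in the remark following Conjecture~\ref{conj}) is exactly your first paragraph, namely that the second and third assertions reduce to the first via Lemma~\ref{lem:ore}, Ore's Theorem~\ref{thm:ore}, and Proposition~\ref{pres_bn}. That reduction in your write-up is correct and coincides with the paper's own argument. But the actual content of the conjecture --- cancellativity of $\mathcal{H}_n^+$ --- is not proved in your proposal; it is only a plan. Phrases such as ``I would run Dehornoy's subword reversing'' and ``I would expect them to organise into an argument uniform in $n$'' are declarations of intent, not verifications, and the plan stalls precisely at the point you yourself identify: for $n\ge 3$ the presentation~\eqref{eq:hn} is not right-complemented (there are $n-1$ relations of the form $\rho_1\cdots=\rho_2\cdots$), so Proposition~\ref{cancellative_criterion} and the sharp cube condition --- the tools that carry the whole cancellativity argument for $M_n$ in Sections of the paper --- do not apply. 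Neither of your two proposed escape routes (reversing for non-complemented presentations, or passage to an auxiliary complemented presentation) is carried out, and there is no reason to believe either is routine: the case $n=3$ of exactly this question was raised by Dehornoy in~\cite[Question~3.8]{subword} and remains open, which is why the paper records the statement as a conjecture rather than a theorem.

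One further caution on the sketch itself: your remark that ``termination of the reversing is not needed'' is misleading. The completeness criteria for reversing that yield cancellativity (in the complemented case, Proposition~\ref{cancellative_criterion}; in the general case, its analogues in~\cite[Chapter~II]{Garside}) require a Noetherianity hypothesis in addition to the local confluence condition; what the surjection from $M_n$ supplies is the existence of common multiples, which is a different ingredient (it enters only after cancellativity is known, to apply Ore's theorem). Noetherianity does hold here because the relations are homogeneous, so this is repairable, but it illustrates that the confluence verification for a multivalued reversing is not a finite, mechanical check in the way your proposal suggests. In short: the corollary part of the statement is correctly derived, but the main assertion is not proved, and no proof of it is currently known.
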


This would positively answer the first part of Question~\ref{q1}. Note that in the particular case $n=3$, Dehornoy asked whether $\mathcal{H}_3^+$ is (right-)cancellative and embeds into its group of fractions (see~\cite[Question~3.8]{subword}--note that the monoid defined there is the opposite monoid of~$\mathcal{H}_3^+$).  

\bigskip

The paper is organized as follows: Section~\ref{sec:garside} is devoted to recalling definitions and properties of Garside monoids and groups, and collecting a few general results which are used later on. In Section~\ref{garside_torus} we recall some basic facts about torus knot groups and their Garside structures. In Section~\ref{sec:main} we introduce the monoids~$M_n$, give several presentations of them, and show that they are Garside monoids (with Garside group~$G(M_n)$ isomorphic to the $(n,n+1)$-torus knot group) using the so-called \textit{reversing approach}. In Section~\ref{sec:link} we explore the link between $G(M_n)$ and $\mathcal{B}_{n+1}$ and give a few properties as well as a conjectural presentation of the submonoid $\Sigma_n$ of $\mathcal{B}_{n+1}$. Section~\ref{sec:dihedral} is devoted to showing that Artin--Tits groups of odd dihedral type can be endowed with a Garside structure that is analogous to the one given by $M_n$.   

\smallskip

{\bf Acknowledgements.} The author thanks Ivan Marin, Jean Michel, Matthieu Picantin, and Baptiste Rognerud for useful discussions. He also thanks an anonymous referee for pointing out that the groups studied in the paper were in fact torus knot groups, and for many relevant comments and suggestions.

\section{Garside monoids and groups}\label{sec:garside}

The aim of this section is to recall a few basic results on Garside monoids and Garside goups for later use. We mostly adopt the definitions and conventions from~\cite{Garside}. Note that, while \textit{loc. cit.} introduces most of the results used in this paper in the general framework of \textit{Garside categories}, we will only need them in the case of presented monoids, and therefore reproduce them here in this less general context for the comfort of the reader. We also include proofs of a few basic results. 

\subsection{Definitions and properties}

Every monoid has a unit element $1$. Let $M$ be a monoid. 

\begin{definition}[Divisors and multiples]
Let $a,b,c\in M$. If $ab=c$ holds, we say that $a$ is a \defn{left-divisor} (respectively, that $b$ is a \defn{right-divisor}) of $c$ and that $c$ is a \defn{right-multiple} of $a$ (respectively a \defn{left-multiple} of $b$).
\end{definition}

\begin{definition}[Cancellativity]
We say that $M$ is \defn{left-cancellative} (respectively \defn{right-cancellative}) if for all $a,b,c\in M$, the equality $ab=ac$ (resp. $ba=ca$) implies $b=c$. If~$M$ is both left- and right-cancellative then we simply say that $M$ is \defn{cancellative}. 
\end{definition}

\begin{theorem}[Ore's Theorem]\label{thm:ore}
If $M$ is cancellative, and if any two elements $a,b\in M$ admit a common left-multiple, that is, if there is $c\in M$ satisfying $a'a=c=b'b$ for some $a',b'\in M$, then $M$ admits a group of fractions $G(M)$ in which it embeds. Moreover, if $\langle \S, \R\rangle$ is a presentation of the monoid $M$, then $\langle \S, \R\rangle$ is a presentation of $G(M)$. 
\end{theorem}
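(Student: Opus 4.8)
The plan is to run the standard localisation-by-left-fractions construction, and then to extract the statement about presentations from the universal property of the group of fractions.

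First I would record the only consequence of the Ore hypothesis that is ever used: by an immediate induction, any finite family of elements of $M$ admits a common left-multiple. Then, thinking of a pair $(a,b)\in M\times M$ as the formal left fraction $a\inv b$, I would introduce on $M\times M$ the relation $(a,b)\sim(c,d)$ whenever there exist $u,v\in M$ with $ua=vc$ and $ub=vd$. Reflexivity and symmetry are clear; transitivity is the first place the Ore condition is needed: given witnesses $u,v$ for $(a,b)\sim(c,d)$ and $u',v'$ for $(c,d)\sim(e,f)$, I would pick $p,q$ with $pv=qu'$ and check that $pu,qv'$ witness $(a,b)\sim(e,f)$. Set $G(M):=(M\times M)/{\sim}$, write $a\inv b$ for the class of $(a,b)$, and define a product by choosing, for classes $a\inv b$ and $c\inv d$, elements $e,f$ with $eb=fc$ (a common left-multiple of $b$ and $c$) and declaring $(a\inv b)(c\inv d):=(ea)\inv(fd)$; heuristically this is just $a\inv b c\inv d=a\inv e\inv f d=(ea)\inv(fd)$.

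The main obstacle — and the only genuinely laborious point — is to check that this product is well defined: independent of the auxiliary $e,f$, and independent of the representatives of the two classes. This is exactly where left- and right-cancellativity enter, and the verification is a sequence of ``diamond'' manipulations, each time passing to a common left-multiple and cancelling. Once that is settled, associativity is routine, $1\inv 1$ is a two-sided unit, and $b\inv a$ is a two-sided inverse of $a\inv b$ (take $e=f=1$ and note $a\inv a=1\inv 1$), so $G(M)$ is a group. The map $\iota\colon M\to G(M)$, $m\mapsto 1\inv m$, is a monoid homomorphism, and it is injective because $1\inv m=1\inv m'$ forces $um=um'$ for some $u$, whence $m=m'$ by left-cancellativity. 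Since $a\inv b=\iota(a)\inv\iota(b)$, the group $G(M)$ is generated by $\iota(M)$, so it is a group of fractions of $M$. In the same breath I would record the universal property: any monoid homomorphism $\phi$ from $M$ into a group $H$ factors uniquely through $\iota$ as a group homomorphism $G(M)\to H$, necessarily sending $a\inv b$ to $\phi(a)\inv\phi(b)$, the well-definedness again following from the common-left-multiple manipulations above.

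Finally, for the presentation statement, let $\langle\S,\R\rangle$ present $M$ and let $G$ denote the \emph{group} with this presentation. Each generator, viewed in $\iota(M)\subseteq G(M)$, satisfies the relations $\R$, so there is a group homomorphism $G\to G(M)$ restricting to $\iota$ on generators. Conversely, reading a word in $\S$ defines a monoid homomorphism $M\to G$ (well defined because the relations $\R$ hold in $G$), which by the universal property extends to a group homomorphism $G(M)\to G$. These two homomorphisms fix the generating set $\S$ pointwise, hence are mutually inverse, so $G\cong G(M)$; that is, $\langle\S,\R\rangle$ presents $G(M)$. I do not expect any difficulty in this last part beyond bookkeeping.
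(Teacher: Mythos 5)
The paper does not prove this statement itself; it defers to Clifford--Preston \cite[Section~1.10]{CP}, and your argument is precisely the standard left-fraction localisation found there (equivalence of pairs via common left-refinements, multiplication through a common left-multiple, well-definedness via cancellativity, then the universal property to transfer the presentation). Your outline is correct and identifies the right places where the Ore condition and the two cancellativity hypotheses are used, so it matches the cited proof in all essentials.
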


A proof of this Theorem can be found for instance in~\cite[Section~1.10]{CP}.

\begin{definition}[Ore monoid]
A monoid satisfying the assumptions of Theorem~\ref{thm:ore} is an \defn{Ore monoid}. 
\end{definition}

\begin{lemma}\label{lem_partial}
If $M$ is left-cancellative (respectively right-cancellative) and $1$ is the only invertible element in $M$, then the left-divisibility (resp. right-divisibility) relation on~$M$ is a partial order. 
\end{lemma}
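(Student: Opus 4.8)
The plan is to check directly the three defining properties of a partial order for the left-divisibility relation --- writing $a \preceq b$ when $b = ac$ for some $c \in M$ --- and then to obtain the right-divisibility statement by the mirror argument (formally, by applying the left-divisibility case to the opposite monoid $M^{\mathrm{op}}$, which is left-cancellative with trivial unit group precisely when $M$ is right-cancellative with trivial unit group).

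Reflexivity is free: $a = a \cdot 1$, so $a \preceq a$ for all $a \in M$. Transitivity is also immediate and uses none of the hypotheses: if $b = ac$ and $d = be$, then $d = a(ce)$, so $a \preceq d$. The substance of the lemma is antisymmetry. Here I would argue as follows: assume $a \preceq b$ and $b \preceq a$, say $b = ac$ and $a = bd$ with $c,d \in M$. Substituting one relation into the other gives $a = a(cd)$ and $b = b(dc)$. Now left-cancellativity applied to $a \cdot 1 = a(cd)$ and to $b \cdot 1 = b(dc)$ forces $cd = 1$ and $dc = 1$, so $c$ is invertible in $M$ with two-sided inverse $d$; since $1$ is the only invertible element, $c = 1$ and hence $b = ac = a$.

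I do not expect a genuine obstacle here --- the proof is a short formal verification. The only point to be careful about is that \emph{both} hypotheses are genuinely needed, and that each enters exactly once in the antisymmetry step: left-cancellativity to pass from $a = a(cd)$ to $cd = 1$ (and symmetrically from $b = b(dc)$ to $dc = 1$), and the triviality of the unit group to conclude $c = 1$ from $cd = dc = 1$. The right-divisibility statement is then the same computation with all products written in the reverse order, using right-cancellativity in place of left-cancellativity.
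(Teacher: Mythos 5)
Your proof is correct and follows essentially the same route as the paper: reflexivity and transitivity are formal, and antisymmetry is obtained by substituting the two divisibility relations into each other and applying left-cancellativity together with the triviality of the unit group. If anything, you are slightly more careful than the paper, which only derives the one-sided identity $c'c=1$ before invoking invertibility, whereas you extract both $cd=1$ and $dc=1$ so that $c$ is genuinely a two-sided unit.
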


\begin{proof}
Reflexivity is clear as $M$ has a unit $1$ and transitivity is also clear (and both hold without the cancellativity assumption and without the assumption on invertible elements). Let $a,b\in M$ such that $a$ left-divides $b$ and $b$ left-divides $a$. Then there are $c,c'\in M$ satisfying $ac=b$ and $bc'=a$. Hence we get $b=ac=bc'c$. By left-cancellativity this implies $c'c=1$, hence $c=1=c'$ as $1$ is the only invertible element in $M$. Hence $a=b$, and the left-divisibility relation is reflexive. The proof of the right counterparts is similar. 
\end{proof}

\begin{definition}[Noetherian divisibility]\label{def:noeth}
We say that the divisibility in $M$ is \defn{Noetherian} if there exists a function $\lambda: M \rightarrow \mathbb{Z}_{\geq 0}$ satisfying $\forall a,b\in M$, $\lambda(ab)\geq \lambda(a)+\lambda(b)$ and $a\neq 1 \Rightarrow \lambda(a)\neq 0$. We say that $M$ is \defn{right-Noetherian} (respectively \defn{left-Noetherian}) if every strictly increasing sequence of divisors with respect to left-divisibility (resp. right-divisibility) is finite. Note that if the divisibility in $M$ is Noetherian, then $M$ is both left- and right-Noetherian.
\end{definition}

Note that it implies that the only invertible element in $M$ is $1$ and that $M$ is infinite for~$M\neq \{1\}$. In particular, by Lemma~\ref{lem_partial}, in a cancellative monoid $M$ with Noetherian divisibility, both left-divisiblity and right-divisibility induce a partial order on~$M$.

\begin{definition}[Garside monoid]\label{def_garside}
A \defn{Garside monoid} is a pair~$(M, \Delta)$ where $M$ is a monoid and $\Delta$ is an element of $M$, satisfying the following five conditions
\begin{enumerate}
\item $M$ is left- and right-cancellative,
\item the divisibility in $M$ is Noetherian,
\item any two elements in $M$ admit a left- and right-lcm, and a left- and right-gcd,
\item the left- and right-divisors of the element $\Delta$ coincide and generate $M$,
\item the set of (left- or right-)divisors of $\Delta$ is finite.
\end{enumerate}
\end{definition}

Note that under these assumptions, the restrictions of left- and right-divisibility to the set of divisors of $\Delta$ yield two lattice structures on this set. 

In general, checking the above five conditions is a nontrivial task, especially for the left- and right-cancellativity. But these conditions have strong implications. We list some of them below, and refer the reader to~\cite{Garside} for complete proofs. 

Let $M$ be a Garside monoid. Firstly, by Ore's Theorem, we have that $M$ embeds into its group of fractions $G(M)$.

\begin{definition}[Garside group]
A group $G$ is a \defn{Garside group} if $G=G(M)$ holds for some Garside monoid $M$. 
\end{definition} 

Secondly, one can define normal forms for elements of~$M$ as products of divisors of the Garside element: let $a\in M$. As $M$ has gcd's, let $x_1=\mathrm{gcd}(a, \Delta)$ (we consider left-gcd's here). Hence $a=x_1 y_1$, and $x_1$ is the greatest divisor of $\Delta$ which also left-divides $a$. By cancellativity, the element $y_1$ is uniquely determined, and one can go on, considering the greatest left-divisor $x_2$ of $y_1$ which also divides $\Delta$. We then write $a=x_1 x_2 y_2$. In this way, we get a uniquely defined sequence of divisors of $\Delta$, and as the divisibility is Noetherian in $M$, this sequence is finite. At the end we get a uniquely defined expression $a=x_1 x_2 \cdots x_k$ as product of divisors of $\Delta$. This normal form is called the \defn{(left-)Garside normal form} of $a$. It can be effectively computed provided that left-gcd's of the form $\mathrm{gcd}(xy, \Delta)$, where $x$ and $y$ are divisors of $\Delta$, can be computed. Indeed, one can show that for~$x,y\in M$, one has $\mathrm{gcd}(xy, \Delta)=\mathrm{gcd}(x(\mathrm{gcd}(y,\Delta)), \Delta)$; this allows one to compute the normal from of $a$ starting from any expression of $a$ as a product of divisors of $\Delta$ (which generate $M$). Namely if $a$ is equal to $a_1 a_2\cdots a_k$ with $a_i$ dividing $\Delta$ for all $i$, then an iterated application of the above formula reduces the computation of the first factor of the Garside normal form to an iterated computation of gcd's of the above form. Similarly, one can define a \textit{right-Garside normal form}.    

Thirdly, the important point about (left-)normal forms in $M$ is that they can be used, in the case where $M$ and $G(M)$ are defined by generators and relations, to give a solution to the word problem in $G(M)$. We say that the word problem in a (finitely generated) group $G$ is \defn{solvable} if there is an algorithm which allows one to determine in finite time whether a word in the generating set represents the identity or not. If $G(M)$ is a Garside group, then it can be checked that every element of $G(M)$ can be written uniquely as an irreducible fraction $x^{-1} y$ with $x, y\in M$, which can be computed using the left-normal form in $M$. The normal form can also be used to give a solution to the conjugacy problem in Garside groups. 

Finally, it can also be shown that every Garside group $G(M)$ is torsion-free, and that a power of $\Delta$ is central in $G(M)$--hence in particular, that the center of $G(M)$ is nontrivial.  

In Sections~\ref{cancel} and~\ref{garside_elts}, we will recall a few existing tools for checking some of the conditions of Definition~\ref{def_garside} in the case of presented monoids.   

\begin{exple}\label{ex_1}
The seminal example is given by braid groups, or more generally Artin--Tits groups of spherical type (\emph{i.e.}, attached to a finite Coxeter system). Let $n\geq 1$. Recall that the~$(n+1)$-strand braid group~$\mathcal{B}_{n+1}$ has a presentation
\[\bigg\langle~ \sigma_1, \sigma_2, \dots, \sigma_n \ \bigg\vert\ \begin{matrix} \sigma_i \sigma_{i+1} \sigma_i=\sigma_{i+1} \sigma_i \sigma_{i+1}~\text{for~}1\leq i<n,\\ \sigma_i \sigma_j = \sigma_j \sigma_i ~\text{for~}|i-j|>1.\end{matrix}~\bigg\rangle\]
A possible Garside monoid $M$ satisfying $G(M)\cong \mathcal{B}_{n+1}$ is given by the positive braid monoid~$\mathcal{B}_{n+1}^+$ defined by the same presentation (but as monoid) as the one given above. The element~$\Delta$ is given by the half-twist, \emph{i.e.}, the lift of the longest permutation of~$\mathfrak{S}_{n+1}$ in $\mathcal{B}_{n+1}^+$. This is the \defn{classical} Garside structure on~$\mathcal{B}_{n+1}$. An alternative Garside monoid~$M'$ such that $G(M')\cong \mathcal{B}_{n+1}$ is given by the \textit{Birman--Ko--Lee braid monoid}~\cite{BKL} (or \textit{dual braid monoid}~\cite{Dual}). In this case, the monoid~$M'$ contains $M$, and the element~$\Delta$ is given by $\sigma_1 \sigma_2\cdots \sigma_{n}$. Both the classical and dual Garside structures generalize to Artin--Tits groups of spherical type, leading to two distinct and uniform solutions to the word problem in these groups.  
\end{exple}

\begin{exple}\label{ex_2}
The two Garside structures (classical and dual) given in Example~\ref{ex_1} are the only known Garside structures on $\mathcal{B}_{n+1}$ which can be defined for all~$n\geq 1$. Whether there exist other Garside structures that can be defined for all~$n\geq 1$ or not is an open problem. For~$n=2$, a few exotic Garside structures are known (see~\cite[Section~X.2.4]{Garside}). In this case, the classical braid monoid $\mathcal{B}_3^+$ has generators $\sigma_1, \sigma_2$ and element $\Delta$ given by $\sigma_1 \sigma_2 \sigma_1= \sigma_2 \sigma_1 \sigma_2=\Delta$ (the half-twist). The dual braid monoid $\mathcal{B}_3^*$ has generators $\sigma_1, \sigma_2, \sigma_1 \sigma_2 \sigma_1^{-1}$ and element $\Delta$ given by $\sigma_1 \sigma_2$. An exotic Garside monoid is given by the submonoid $\Sigma_2$ with generators $\rho_1=\sigma_1, \rho_2=\sigma_1 \sigma_2$ and element $\Delta$ given by $(\sigma_1 \sigma_2 \sigma_1)^2=\rho_2^3$. A presentation of $\Sigma_2$ is given by the single relation $\rho_1 \rho_2 \rho_1=\rho_2^2$. Another exotic Garside monoid for $\mathcal{B}_3$ is given by the monoid with generators $x,y$ and a single relation $x^2=y^3$: in fact, this is a presentation of the knot group of the trefoil knot (which is a torus knot); by~\cite[Example~4]{DP}, torus knot groups are known to be Garside groups. In terms of the classical generators we have $x=\sigma_1^2 \sigma_2$, $y=\sigma_1\sigma_2$.     
\end{exple}

\subsection{Cancellativity criteria for presented monoids}\label{cancel}

This section is devoted on recalling some known cancellativity criteria for presented monoids which will be used in Section~\ref{sec:main}. We recall them from~\cite[Section~II.4]{Garside} (extending approaches from~\cite{dehornoy_garside}; see also~\cite{dehornoy_monoids} for more recent results). Most of the definitions given in this section are also borrowed from~\cite{Garside}. 

Assume that $M$ is a monoid defined by a presentation $\langle \S, \R \rangle$, where $\S$ is a finite set of generators and $\R$ a set of relations between words in $\S^*$, \emph{i.e.}, words with letters in the generating set $\S$.

\begin{definition}[Right-complemented presentation]
The presentation $\langle\mathcal{S}, \mathcal{R}\rangle$ is \defn{right-complemented} if $\R$ contains no relation where one side is equal to the empty word, no relation of the form $s\cdots = s\cdots$ with $s\in\mathcal{S}$, and if for $s\neq t\in\mathcal{S}$, there is at most one relation of the form $s\cdots = t\cdots$ in $\mathcal{R}$. 
\end{definition}

\begin{exple}
The classical presentation of the $(n+1)$-strand braid group that we recalled in Example~\ref{ex_1} is right-complemented. More generally, the standard presentation of any Artin--Tits group is right-complemented. 
\end{exple}

Given a right-complemented presentation~$\langle\mathcal{S}, \mathcal{R}\rangle$ of a monoid~$M$, there is a uniquely determined partial map~$\theta: \mathcal{S}\times \mathcal{S}\longrightarrow \mathcal{S}^*$ such that $\theta(s, s)=1$ holds for all $s\in\mathcal{S}$ and such that for $s\neq t\in\mathcal{S}$, the words $\theta(s,t)$ and $\theta(t,s)$ are defined whenever there is a relation $s\cdots = t\cdots$ in $\mathcal{R}$, and are such that this relation is given by $s \theta(s,t) = t \theta(t,s)$. The map~$\theta$ is the \defn{syntactic right-complement} attached to the right-complemented presentation~$\langle\mathcal{S}, \mathcal{R}\rangle$. 

If $\langle\S, \R\rangle$ is right-complemented, then by~\cite[Lemma~II.4.6]{Garside}, the map~$\theta$ admits a unique minimal extension to a partial map from~$\mathcal{S}^* \times \mathcal{S}^*$ to~$\mathcal{S}^*$ which we still denote $\theta$, and satisfying \begin{eqnarray}
\label{c}\theta(s,s)=1,~\forall s\in\mathcal{S},\\
\theta(ab,c)=\theta(b, \theta(a,c)),~\forall a,b,c\in\mathcal{S}^*, \\
\theta(a,bc)=\theta(a,b)\theta(\theta(b,a),c),~\forall a,b,c\in\mathcal{S}^*,\\
\label{d}\theta(1,a)=a\text{ and }\theta(a,1)=1,~\forall a\in\mathcal{S}^*.
\end{eqnarray}

\begin{definition}[Cube condition]\label{def_cube} Given a right-complemented presentation~$\langle\mathcal{S}, \mathcal{R}\rangle$ of a monoid~$M$ with syntactic right-complement~$\theta$, we say that the \defn{$\theta$-cube condition holds} (respectively that the \defn{sharp $\theta$-cube condition holds}) for a triple~$(a,b,c)\in({\mathcal{S}^*})^3$ if either both $\theta(\theta(a,b), \theta(a,c))$ and $\theta(\theta(b,a), \theta(b,c))$ are defined and represent words in~$\mathcal{S}^*$ that are equivalent under the set of relations~$\mathcal{R}$ (resp. that are equal as words), or neither of them is defined. 
\end{definition} 

\begin{definition}[Conditional lcm]
We say that a left-cancellative (respectively right-cancellative) monoid~$M$ with no nontrivial invertible element \defn{admits conditional right-lcms} (resp. \defn{admits conditional left-lcms}) if any two elements of $M$ that admit a common right-multiple (resp. left-multiple) admit a common right-lcm (resp. left-lcm). 
\end{definition}

\begin{prop}[{see~\cite[Proposition~II.4.16]{Garside}}]\label{cancellative_criterion}
If $\langle\mathcal{S}, \mathcal{R}\rangle$ is a right-complemented presentation of a monoid~$M$ with syntactic right-complement~$\theta$, and if $M$ is right-Noetherian and the $\theta$-cube condition holds for every triple of pairwise distinct elements of $\mathcal{S}$, then $M$ is left-cancellative, and admits conditional right-lcms. More precisely, $u$ and $v$ admit a common right-multiple if and only if $\theta(u,v)$ exists and, then, $u\theta(u,v)=v\theta(v,u)$ represents the right-lcm of these elements.  
\end{prop}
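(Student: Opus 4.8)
The plan is to run the whole argument through the \emph{(right) word reversing} machinery attached to the right-complemented presentation~$\langle\mathcal{S},\mathcal{R}\rangle$, following~\cite[Chapter~II]{Garside} (this proposition being~\cite[Proposition~II.4.16]{Garside}). Recall that right reversing is the rewriting process on words over the alphabet~$\mathcal{S}\cup\mathcal{S}^{-1}$ which repeatedly deletes a factor~$s^{-1}s$ and replaces a factor~$s^{-1}t$ with $s\neq t$ by the word~$\theta(s,t)\,\theta(t,s)^{-1}$, getting \emph{stuck} when~$\mathcal{R}$ contains no relation of the form $s\cdots = t\cdots$. Applied to a word~$u^{-1}v$ with $u,v\in\mathcal{S}^*$, it produces — whenever it terminates without getting stuck — a word of the form~$p\,q^{-1}$ with $p,q\in\mathcal{S}^*$, and a straightforward invariant read off the reversing diagram shows that at every stage the positive and negative halves~$p',q'$ satisfy $u\,p'\equiv v\,q'$ modulo~$\mathcal{R}$; in the notation of the statement the terminal word is $\theta(u,v)\,\theta(v,u)^{-1}$, so that $u\,\theta(u,v)\equiv v\,\theta(v,u)$ is a common right-multiple of~$u$ and~$v$.

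The argument then rests on two points. First, right-Noetherianity of~$M$ guarantees that every right-reversing sequence terminates, so the process above always yields either a well-defined word $p\,q^{-1}$ or a definite ``stuck'' verdict (it also rules out nontrivial invertible elements, so that the notion of conditional right-lcm applies). Second — and this is the heart — one upgrades the $\theta$-cube condition from triples of pairwise distinct \emph{generators} to \emph{all} triples of words in~$\mathcal{S}^*$, and concludes that right reversing is \emph{complete} for~$\langle\mathcal{S},\mathcal{R}\rangle$: for $u,v\in\mathcal{S}^*$ one has $u\equiv v$ modulo~$\mathcal{R}$ if and only if $u^{-1}v$ reverses to the empty word, and, crucially, reversing is \emph{convergent}, i.e.\ any two reversing sequences out of a given word terminate at the same word up to~$\mathcal{R}$-equivalence of the positive and of the negative halves. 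This upgrade is an induction on the lengths of the three words, feeding the recursion relations~\eqref{c}--\eqref{d} for the extended complement into the cube condition for the sub-triples obtained by splitting off a letter from each word. The main obstacle is that $\theta(ab,c)=\theta(b,\theta(a,c))$ substitutes for the third entry the word~$\theta(a,c)$, whose length is not controlled by~$|c|$, so that a naive induction on $|a|+|b|+|c|$ does not close; one has to organise the induction more carefully, as in the proof of~\cite[Proposition~II.4.16]{Garside}.

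Granting completeness, the two conclusions fall out. For left-cancellativity, assume $u\,a\equiv u\,b$ with $u,a,b\in\mathcal{S}^*$; then $(ua)^{-1}(ub)=a^{-1}\,u^{-1}u\,b$ reverses to the empty word by completeness, and reversing it along the sequence that first collapses the central factor~$u^{-1}u$ to the empty word lands on~$a^{-1}b$, so convergence forces $a^{-1}b$ itself to reverse to the empty word, whence $a\equiv b$. For conditional right-lcms, let $u,v\in\mathcal{S}^*$ have a common right-multiple, say $u\,c\equiv v\,d$; then $(uc)^{-1}(vd)$ reverses to the empty word, so convergence prevents the reversing of~$u^{-1}v$ from getting stuck, and hence $\theta(u,v),\theta(v,u)$ are defined with $u\,\theta(u,v)\equiv v\,\theta(v,u)$ a common right-multiple. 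To see this is the \emph{least} common right-multiple, reverse the words $\theta(u,v)^{-1}c$ and $\theta(v,u)^{-1}d$: completeness shows their negative halves are empty, i.e.\ $\theta(u,v)$ left-divides~$c$ and $\theta(v,u)$ left-divides~$d$, so $u\,\theta(u,v)$ left-divides $u\,c$, and therefore every common right-multiple of~$u$ and~$v$. Thus $u\,\theta(u,v)=v\,\theta(v,u)$ represents the right-lcm of~$u$ and~$v$, and since, conversely, $\theta(u,v)$ being defined already produces a common right-multiple, the existence of~$\theta(u,v)$ is equivalent to that of a common right-multiple — which is the final assertion.
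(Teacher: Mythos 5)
This proposition is quoted in the paper from \cite[Proposition~II.4.16]{Garside} without proof, so there is no in-paper argument to compare yours against; what you have written is an outline of the argument given in that reference. Your description of the reversing machinery, of the invariant $u\,p'\equiv v\,q'$ along a reversing sequence, and of how completeness yields left-cancellativity and conditional right-lcms (including the minimality argument obtained by reversing $\theta(u,v)^{-1}c$ and $\theta(v,u)^{-1}d$) is accurate and is indeed how the cited proof is organised.

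The gap is the one you yourself flag: the entire content of the proposition is the passage from the cube condition on triples of pairwise distinct \emph{generators} to completeness of reversing for arbitrary words, and at exactly that point you write that one has to organise the induction ``as in the proof of \cite[Proposition~II.4.16]{Garside}'' --- that is, you appeal to the statement being proved. Your diagnosis of why the naive induction fails is correct (in $\theta(ab,c)=\theta(b,\theta(a,c))$ the word $\theta(a,c)$ is not length-controlled by $c$), but the fix is not a cleverer bookkeeping of word lengths: it is precisely here that the right-Noetherianity hypothesis is consumed, by running the induction along the well-founded ordering it provides (via a witness $\lambda$ as in Definition~\ref{def:noeth}) rather than on $|a|+|b|+|c|$. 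In your sketch, Noetherianity is only credited with the termination of individual reversing sequences and the absence of nontrivial invertible elements, which understates its role and leaves the central induction unclosed. Since the paper treats this result as a black-box citation, the omission is harmless for the paper's purposes, but as a standalone proof the proposal establishes everything except the key step.
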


The classical presentation of the braid group (given in Example~\ref{ex_1}) again satisfies the assumptions of the above proposition: for more details and an explicit check of the $\theta$-cube condition, we refer the reader to~\cite[Example~II.4.20]{Garside}.

For later use we also state the following result:

\begin{lemma}[{see~\cite[Lemma~II.2.22]{Garside}}]\label{lemma_gcd}
If $M$ is cancellative and admits conditional right-lcms (respectively left-lcms), then any two elements of $M$ that admit a common left-multiple (resp. right-multiple) admit a right-gcd (resp. left-gcd).  
\end{lemma}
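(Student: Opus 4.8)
The plan is to produce the right-gcd of $a$ and $b$ directly from the right-lcm of the cofactors appearing in a common left-multiple, using only the stated hypotheses (cancellativity and conditional right-lcms) and \emph{no} Noetherian or maximal-element argument. Suppose $a,b\in M$ admit a common left-multiple $c$; by the definition of left-multiple we may write $c=ua=vb$ for some $u,v\in M$. The key observation is that $c$ is then a common \emph{right}-multiple of the cofactors $u$ and $v$, since $c=ua$ exhibits it as a right-multiple of $u$ and $c=vb$ as a right-multiple of $v$. As $M$ admits conditional right-lcms, $u$ and $v$ therefore admit a right-lcm $\ell$, say $\ell=up=vq$.

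Next I would check that the cofactor $r$ cutting $\ell$ out of $c$ is a common right-divisor of $a$ and $b$. Because $\ell$ is the right-lcm of $u$ and $v$, it left-divides every common right-multiple of $u$ and $v$, in particular $c$; write $c=\ell r$ for some $r\in M$. Substituting $\ell=up$ gives $ua=c=upr$, and left-cancellativity yields $a=pr$; symmetrically $\ell=vq$ gives $b=qr$. Thus $r$ is a right-divisor of both $a$ and $b$.

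The heart of the argument is the maximality of $r$, which I expect to be the main obstacle, since it requires invoking the lcm property a second time together with both cancellation laws. Let $d$ be an arbitrary common right-divisor of $a$ and $b$, say $a=a''d$ and $b=b''d$. Then $c=ua''d$ and $c=vb''d$, so $ua''d=vb''d$, and right-cancelling $d$ gives $ua''=vb''$. Hence $ua''$ is a common right-multiple of $u$ and $v$, so $\ell$ left-divides it: $ua''=\ell s$ for some $s\in M$. Consequently $c=ua''d=\ell sd$, while also $c=\ell r$, and left-cancelling $\ell$ yields $r=sd$, \ie $d$ is a right-divisor of $r$. Therefore $r$ is the greatest common right-divisor, which establishes that $a$ and $b$ admit a right-gcd.

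Finally I would note that the statement for left-lcms and left-gcds follows by the left-right symmetric argument (equivalently, by passing to the opposite monoid), so no separate proof is needed.
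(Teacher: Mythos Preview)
Your argument is correct: producing the right-gcd as the cofactor of the right-lcm of the left cofactors, and establishing maximality via a second appeal to the lcm property combined with both cancellation laws, is exactly the standard proof. Note that the paper does not supply its own proof of this lemma but merely cites \cite[Lemma~II.2.22]{Garside}, so there is nothing to compare against; your write-up is essentially the argument one finds in that reference.
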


\subsection{Garside elements and induced lattices}\label{garside_elts}

Most of the content of this section is folkloric. We include proofs for the sake of completeness. 

\begin{lemma}\label{cond_delta_lcm}
Let $M$ be a cancellative monoid with no nontrivial invertible element (so that left- and right-divisibility relations are partial orders on $M$). Assume that $M$ has conditional (left- and right-) lcms, and that $M$ has an element~$\Delta$ satisfying the following assumptions \begin{itemize}
\item the sets of left- and right-divisors of $M$ coincide, and form a finite set,
\item the set of divisors of $\Delta$ generate $M$.
\end{itemize}
Then any two elements $x, y\in M$ admit a left-lcm and a right-lcm.
\end{lemma}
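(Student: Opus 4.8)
The plan is to show that any two elements $x,y\in M$ admit a common right-multiple (and, symmetrically, a common left-multiple); once this is established, the hypothesis that $M$ has conditional right-lcms (resp.\ left-lcms) immediately upgrades this to the existence of an actual right-lcm (resp.\ left-lcm), which is exactly the conclusion. So the crux is a \emph{boundedness} statement: every pair of elements is dominated by a common multiple.

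First I would observe that, because the set $\Div(\Delta)$ of divisors of $\Delta$ is finite and generates $M$, it suffices to prove the claim when $x$ and $y$ are themselves divisors of $\Delta$. Indeed, if every pair of divisors of $\Delta$ admits a common right-multiple, then an induction on the length of an expression of $x$ (or $y$) as a product of divisors of $\Delta$ — using right-cancellativity and the fact that from $x = x_1 x'$, a common right-multiple of $x_1$ and $y$ can be ``pushed forward'' to produce one of $x$ and $y$ — yields the general case. The Noetherian divisibility (which holds here since $M$ is cancellative with conditional lcms and $\Div(\Delta)$ finite generating, though I only really need that words in the generators have bounded-below length) makes this induction legitimate.

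The key step is then: for $x,y\in\Div(\Delta)$, the element $\Delta$ itself is a common right-multiple. This is precisely where the hypothesis ``the sets of left- and right-divisors of $\Delta$ coincide'' is used: $x$ left-divides $\Delta$, so $\Delta = x u$ for some $u$; and $y$ left-divides $\Delta$ as well. But I want $\Delta$ (or some fixed power of it) to be a right-multiple of \emph{both}. The clean way is: since left- and right-divisors of $\Delta$ agree, $y$ is also a \emph{right}-divisor of $\Delta$, i.e.\ $\Delta = v y$ for some $v$. Now $x$ and $y$ need not both divide a single $\Delta$ on the correct side simultaneously, so I would instead argue that $\Delta^2$ (or $\Delta^N$ for suitable $N$) works: $x$ left-divides $\Delta$ hence left-divides $\Delta^2$, so $\Delta^2$ is a right-multiple of $x$; and $y$ is a right-divisor of $\Delta$ hence of $\Delta^2$, giving $\Delta^2 = (\text{something})\cdot\Delta$; combining, I can arrange a common right-multiple. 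Symmetrically one gets common left-multiples. Then conditional right-lcms give the right-lcm of $x,y$, and conditional left-lcms give the left-lcm; the general case follows by the reduction above.

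The main obstacle I anticipate is precisely the bookkeeping in the previous paragraph: turning ``$x$ is a left-divisor and $y$ is a right-divisor of $\Delta$'' into ``$x$ and $y$ have a common right-multiple'' requires using the left/right symmetry of $\Div(\Delta)$ more than once and being careful about which side things divide on — a naive attempt shows $\Delta$ is a right-multiple of $x$ and a left-multiple of $y$, which is not yet what we want. The resolution is to note that being a divisor of $\Delta$ is a two-sided notion by hypothesis, so $y\in\Div(\Delta)$ is also a left-divisor, write $\Delta = y w$, and then $x\mid_{\mathrm{left}}\Delta$ together with $y\mid_{\mathrm{left}}\Delta$ both exhibit $\Delta$ as a common right-multiple of $x$ and $y$ directly — no power needed. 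The reduction from divisors of $\Delta$ to arbitrary elements then proceeds by induction as sketched, using cancellativity to propagate common multiples along products of generators.
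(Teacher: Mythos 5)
Your high-level plan (produce a common right-multiple, then invoke conditional lcms) matches the paper's, and the observation that $\Delta$ is a common right-multiple of any two of its divisors is correct. But the reduction from arbitrary $x,y$ to divisors of $\Delta$ is where all the content of the lemma lives, and your sketch of it has a genuine gap. Writing $x=x_1x'$ and taking a common right-multiple $m=x_1u=yv$ of $x_1$ and $y$, to ``push forward'' you need a common right-multiple of $x'$ and $u$ --- and $u$ is an arbitrary element of $M$ whose size is not controlled by any induction measure you have set up (neither the number of $\Delta$-divisor factors nor the Noetherian length of $x'$ plus $u$ need be smaller than that of $x$ plus $y$). So the induction as described does not close. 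Relatedly, your second and third paragraphs worry about the two-sided divisor hypothesis in the base case, where it is not actually needed ($x$ and $y$ in $\Div(\Delta)$ are both left-divisors, so $\Delta$ is trivially a common right-multiple); the place where that hypothesis genuinely does work is precisely the step your proof never reaches.

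The paper's proof closes the induction by proving a stronger, uniform statement: any product $a=a_1\cdots a_m$ of $m$ divisors of $\Delta$ left-divides $\Delta^m$. The engine is the quasi-commutation relation $z\Delta=\Delta z'$ for $z\in\Div(\Delta)$ (write $\Delta=zu$ with $u$ a right-divisor, hence also a left-divisor, $\Delta=uw$, so $z\Delta=zuw=\Delta w$) --- this is exactly where the coincidence of left- and right-divisors is used. One then peels off factors: $\Delta^m=a_1\Delta^{m-1}b_1=\cdots=a_1\cdots a_mb_m\cdots b_1$, and $\Delta^{\max(k,\ell)}$ is a common right-multiple of $x$ and $y$. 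Your argument could also be repaired without powers of $\Delta$ by strengthening the induction hypothesis to track the complements (namely, that $x$ and $y$ with $k$ and $\ell$ factors admit a common right-multiple $xu=yv$ in which $u$ has at most $\ell$ factors and $v$ at most $k$), but some such strengthening is indispensable; as written, the ``pushing forward'' step is circular.
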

\begin{proof}
As $M$ has conditional lcms, it suffices to show that any two elements~$x,y\in M$ have a (left- or right-)common multiple. We show that $x,y$ have a common right-multiple (the proof for left-multiples is similar). Note that under our assumptions, if $z$ is any divisor of $\Delta$, then $z \Delta=\Delta z'$ for some divisor~$z'$ of $\Delta$.  

Let $x=x_1 x_2\cdots x_k$, $y=y_1 y_2\cdots y_\ell$ where the $x_i$'s and $y_i$'s are divisors of $\Delta$. Without loss of generality we can assume that $\ell \leq k$ holds. Then we claim that $\Delta^k$ is a common right-multiple of $x$ and $y$. To this end, it suffices to show that if $a=a_1 a_2 \cdots a_m$ is an element of $M$ which is a product of $m$ divisors~$a_i$ of $\Delta$, then $a$ is a left-divisor of $\Delta^m$. As $a_1$ is a left-divisor of $\Delta$ and left and right-divisors of $\Delta$ coincide, we can write $\Delta^m= a_1 \Delta^{m-1} b_1$, where $b_1$ is a divisor of $\Delta$. Iterating, we eventually end up with a decomposition~$\Delta^m= a_1 a_2 \cdots a_m b_m \cdots b_2 b_1$, which concludes the proof. 
\end{proof}

\begin{definition}[Garside element]
If $M$ and $\Delta$ satisfy the assumptions of the above lemma, we say that $\Delta$ is a \defn{Garside element} in $M$. In this case we denote by $\Div(\Delta)$ the set of left-divisors of $\Delta$ (which is equal to the set of right-divisors of $\Delta$). We call its elements the \defn{simples} of $(M,\Delta)$. 
\end{definition}

\begin{figure}
\begin{multicols}{3}
\begin{pspicture}(0,0)(3.5,4.5)
\psdots(1.5,0)(0.5,1.5)(2.5,1.5)(0.5,3)(2.5,3)(1.5,4.5)
\psline(1.5,0)(0.5,1.5)
\psline(1.5,0)(2.5,1.5)
\psline(0.5,1.5)(0.5,3)
\psline(2.5,1.5)(2.5,3)
\psline(2.5,3)(1.5,4.5)
\psline(0.5,3)(1.5,4.5)
\rput(1,0){\tiny $1$}
\rput(0.1,1.5){\tiny $\sigma_1$}
\rput(-0.05,3){\tiny $\sigma_1 \sigma_2$}
\rput(0.7,4.5){\tiny $\sigma_1 \sigma_2 \sigma_1$}
\rput(3,1.5){\tiny $\sigma_2$}
\rput(3.1,3){\tiny $\sigma_2 \sigma_1$}

\end{pspicture}

\begin{pspicture}(0,0)(3,4.5)
\psdots(1.5,0.5)(0,2)(1.5,2)(3,2)(1.5,3.5)
\psline(1.5,0.5)(1.5,3.5)
\psline(1.5,0.5)(0,2)
\psline(0,2)(1.5,3.5)
\psline(1.5,0.5)(3,2)
\psline(3,2)(1.5,3.5)
\rput(1,0.5){\tiny $1$}
\rput(-0.75,2){\tiny $\sigma_1\sigma_2\sigma_1^{-1}$}
\rput(1.1,2){\tiny $\sigma_1$}
\rput(2.6,2){\tiny $\sigma_2$}
\rput(0.9,3.5){\tiny $\sigma_1 \sigma_2$}

\end{pspicture}

\begin{pspicture}(0.4,0)(3.5,4.5)
\psdots(1.5,0)(0.5,0.7)(2.5,1.4)(0.5,2.1)(2.5,2.1)(0.5,2.8)(2.5,3.5)(1.5,4.2)
\psline(1.5,0)(0.5,0.7)
\psline(0.5,0.7)(0.5,2.8)
\psline(1.5,0)(2.5,1.4)
\psline(2.5,1.4)(0.5,2.8)
\psline(2.5,1.4)(2.5,3.5)
\psline(2.5,3.5)(1.5,4.2)
\psline(0.5,2.8)(1.5,4.2)
\rput(1,0){\tiny $1$}
\rput(0.15,0.7){\tiny $\sigma_1$}
\rput(-0.2,2.1){\tiny $\sigma_1 \sigma_1 \sigma_2$}
\rput(-0.2,2.8){\tiny $(\sigma_1 \sigma_2)^2$}
\rput(0.55,4.2){\tiny $(\sigma_1 \sigma_2 \sigma_1)^2$}
\rput(3,1.4){\tiny $\sigma_1 \sigma_2$}
\rput(3.1,2.1){\tiny $\sigma_1 \sigma_2 \sigma_1$}
\rput(3.4,3.5){\tiny $\sigma_1 \sigma_2 \sigma_1 \sigma_1 \sigma_2$}

\end{pspicture}

\end{multicols}
\caption{The lattice of simples (for left-divisibility) in three different Garside monoids for $\mathcal{B}_3$, expressed in terms of the classical Artin generators of $\mathcal{B}_3$. The lattice for the classical Garside structure is on the left, the one for the dual Garside structure in the middle, and the one for the exotic Garside structure given by the monoid $\Sigma_2$ discussed in Example~\ref{ex_2} on the right.}
\label{b3}
\end{figure}

Note that if the conditions in Lemma~\ref{cond_delta_lcm} are satisfied, then the set of divisors of the Garside element~$\Delta$, endowed with the restriction of the left-divisibility relation (which is a partial order by Lemma~\ref{lem_partial}), forms a lattice. In Figure~\ref{b3}, we represented (the Hasse diagram of) the lattice induced by left-divisiblity on the set of simples in three different Garside monoids for~$\mathcal{B}_3$ given in Example~\ref{ex_2}. 

The same holds for the restriction of right-divisibility. In general these two lattices are not isomorphic. We shall see an example of this phenomenon in Remark~\ref{not_isom} below (note that in the three examples depicted in Figure~\ref{b3}, they are isomorphic). Nevertheless, we have: 

\begin{lemma}\label{lem_dual_lattice}
Let $M$ and $\Delta$ satisfying the assumptions of Lemma~\ref{cond_delta_lcm}. Let $\leq_L$ (respectively $\leq_R$) be the partial order induced by left-divisibility on $\Div(\Delta)$ (respectively by right-divisibility). Then the map~$x\mapsto \Delta x^{-1}$ is an isomorphism of lattices~$(\Div(\Delta), \leq_L) \cong (\Div(\Delta), \leq_R)^{\mathrm{op}}$. In other words, the lattice~$(\Div(\Delta), \leq_L)$ is isomorphic to the dual of the lattice~$(\Div(\Delta), \leq_R)$.   
\end{lemma}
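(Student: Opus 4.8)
The plan is to show that the map $\varphi\colon x\mapsto \Delta x^{-1}$ (where $\Delta x^{-1}$ is computed in the group of fractions $G(M)$, but lands back in $\Div(\Delta)$) is a well-defined bijection from $\Div(\Delta)$ to itself, and that it reverses left-divisibility into right-divisibility. First I would check that $\varphi$ makes sense: if $x\in\Div(\Delta)$, write $\Delta=xy$ for some $y\in M$; since left- and right-divisors of $\Delta$ coincide, $y$ is also a right-divisor, so $y=\Delta z^{-1}$ for a unique $z$, but more directly, $\Delta x^{-1}=y$ in $G(M)$ (using that $M$ embeds in $G(M)$ by Ore's theorem), and $y$ is a left-divisor of $\Delta$ since it is a right-divisor and the two sets coincide. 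So $\varphi(x)=y\in\Div(\Delta)$. Similarly, the map $\psi\colon x\mapsto x^{-1}\Delta$ sends $\Div(\Delta)$ to itself. Cancellativity in $M$ shows $\varphi$ and $\psi$ are mutually inverse in one of the two compositions; one should be a little careful, since $\Delta x^{-1}$ and $x^{-1}\Delta$ need not agree, so I would instead directly exhibit a two-sided inverse for $\varphi$ by the same recipe and conclude $\varphi$ is a bijection using finiteness of $\Div(\Delta)$ together with injectivity (injectivity is immediate from cancellativity: $\Delta x^{-1}=\Delta x'^{-1}$ in $G(M)$ forces $x=x'$).

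Next I would verify that $\varphi$ is order-reversing from $\leq_L$ to $\leq_R$. Suppose $x\leq_L x'$ in $\Div(\Delta)$, so $x'=xu$ for some $u\in M$. Write $\Delta=x'y'=xuy'$, so $\Delta=x y$ with $y=uy'=\varphi(x)$ and $\Delta=x' y'$ with $y'=\varphi(x')$; hence $\varphi(x)=\varphi(x')\cdot$?? — here I must be careful about sides. Writing $\Delta=xy'$? No: from $\Delta=xuy'$ and $\Delta=xy$ and left-cancellativity, $y=uy'$, i.e. $\varphi(x)=u\,\varphi(x')$, so $\varphi(x')$ is a \emph{right}-divisor of $\varphi(x)$, that is $\varphi(x')\leq_R\varphi(x)$. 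This is exactly the order-reversal we want. For the converse direction (that $\varphi$ reflects the order, needed for $\varphi$ to be an isomorphism and not merely an order-reversing map), I would apply the same argument to the inverse bijection $\varphi^{-1}$, which is given by the analogous recipe with the roles of left and right swapped, so $\varphi^{-1}$ carries $\leq_R$-relations to $\leq_L$-relations; combined with the first half this yields the claimed lattice isomorphism $(\Div(\Delta),\leq_L)\cong(\Div(\Delta),\leq_R)^{\mathrm{op}}$, since an order-reversing bijection whose inverse is also order-reversing is an isomorphism onto the opposite poset, and a poset isomorphism between lattices is automatically a lattice isomorphism.

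The main obstacle I anticipate is purely bookkeeping: keeping the left/right conventions straight and making sure that the identity $\Delta x^{-1}\in M$ is justified cleanly. Concretely, the one genuine input is the hypothesis that the left-divisors and right-divisors of $\Delta$ coincide (condition in Lemma~\ref{cond_delta_lcm}); everything else is cancellativity in $M$, the embedding $M\hookrightarrow G(M)$ from Ore's theorem, and finiteness of $\Div(\Delta)$. I do not expect to need the lattice structure itself in the proof beyond invoking that an order isomorphism of lattices preserves joins and meets; the content is entirely at the level of the underlying posets. One small point worth spelling out: to see $\varphi$ is a bijection it is cleanest to note $\varphi$ is an injection of the finite set $\Div(\Delta)$ into itself, hence a bijection, rather than wrestling with whether $\psi\circ\varphi=\mathrm{id}$ on the nose.
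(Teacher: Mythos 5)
Your proposal is correct and follows essentially the same route as the paper: well-definedness from the coincidence of left- and right-divisors of $\Delta$, bijectivity via the inverse recipe $y\mapsto y^{-1}\Delta$ (your injectivity-plus-finiteness shortcut also works, though the two maps are in fact mutually inverse on the nose since $(\Delta x^{-1})^{-1}\Delta=x$), and order-reversal by cancellation. Your cancellation step ($\Delta=xy=xuy'$ gives $y=uy'$) is a slightly more direct version of the paper's computation, which introduces auxiliary complements $b,c$ and works in the group of fractions, but the content is identical.
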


\begin{proof}
The fact that $x\mapsto \Delta x^{-1}$ is well-defined is clear, as left- and right-divisors of $\Delta$ coincide and $M$ is cancellative. It is invertible, with inverse given by~$y\mapsto y^{-1} \Delta$. It remains to show that both~$x\mapsto \Delta x^{-1}$ and its inverse are order-preserving. Let~$x,y\in \Div(\Delta)$ satisfying $x\leq_L y$. Then there is $a\in \Div(\Delta)$ satisfying $xa=y$. As $a\in\Div(\Delta)$, there is $b\in\Div(\Delta)$ satisfying $ba=\Delta$. Similarly, as $b\in\Div(\Delta)$, there is $c\in\Div(\Delta)$ satisfying $cb=\Delta$. We then have 
$c \Delta y^{-1}= c \Delta a^{-1} x^{-1}=cb x^{-1} = \Delta x^{-1}$, which shows that $\Delta y^{-1} \leq_R \Delta x^{-1}$, hence that $x \mapsto \Delta x^{-1}$ is order-preserving. 

The proof that the inverse map $y\mapsto y^{-1} \Delta$ is also order-preserving is similar.   
\end{proof}

\begin{prop}\label{cor_dual}
Let $\Delta$ be a Garside element in $M$. Then $$(\Div(\Delta), \leq_L)\text{~is self-dual~}\Leftrightarrow (\Div(\Delta), \leq_L)\cong (\Div(\Delta), \leq_R)\Leftrightarrow(\Div(\Delta), \leq_R)\text{~is self-dual}.$$   
\end{prop}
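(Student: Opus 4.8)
The plan is to deduce everything from Lemma~\ref{lem_dual_lattice}, which already hands us an isomorphism of lattices $x \mapsto \Delta x^{-1}$ exhibiting $(\Div(\Delta), \leq_L) \cong (\Div(\Delta), \leq_R)^{\mathrm{op}}$. For brevity write $L := (\Div(\Delta), \leq_L)$ and $R := (\Div(\Delta), \leq_R)$, so that Lemma~\ref{lem_dual_lattice} reads simply $L \cong R^{\mathrm{op}}$. Since forming the opposite poset sends isomorphisms to isomorphisms and satisfies $(P^{\mathrm{op}})^{\mathrm{op}} = P$, this single relation also yields $L^{\mathrm{op}} \cong R$ and $R^{\mathrm{op}} \cong L$.

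First I would prove that $L$ is self-dual if and only if $L \cong R$. If $L \cong L^{\mathrm{op}}$, then composing with the isomorphism $L^{\mathrm{op}} \cong R$ gives $L \cong R$. Conversely, if $L \cong R$, then composing with $R \cong L^{\mathrm{op}}$ gives $L \cong L^{\mathrm{op}}$, i.e. $L$ is self-dual. This settles the first of the two claimed equivalences.

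Then the same argument, with the roles of $L$ and $R$ exchanged and using $R^{\mathrm{op}} \cong L$ in place of $L^{\mathrm{op}} \cong R$, shows that $R$ is self-dual if and only if $R \cong L$ — which is the same condition as $L \cong R$. Combining the two equivalences yields the three-way equivalence in the statement.

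The only point requiring a bit of care is the bookkeeping of opposites: one must note explicitly that $(-)^{\mathrm{op}}$ preserves poset isomorphisms and is involutive, so that from $L \cong R^{\mathrm{op}}$ one may legitimately pass to $L^{\mathrm{op}} \cong R$ and $R^{\mathrm{op}} \cong L$. Beyond this, I expect no real obstacle: the mathematical content sits entirely in Lemma~\ref{lem_dual_lattice}, and Proposition~\ref{cor_dual} is a purely formal consequence of it.
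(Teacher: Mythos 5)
Your argument is correct and is exactly the intended one: the paper gives no explicit proof of Proposition~\ref{cor_dual}, treating it as an immediate formal consequence of Lemma~\ref{lem_dual_lattice}, and your derivation (using that $(-)^{\mathrm{op}}$ is involutive and preserves isomorphisms to pass from $L\cong R^{\mathrm{op}}$ to the three-way equivalence) is precisely that intended deduction.
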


\section{Garside structures on torus knot groups}\label{garside_torus}

Let $n,m$ be two relatively prime positive integers. The~\defn{torus knot group} $G(n,m)$ is defined as the knot group of the torus knot $T_{n,m}$, \emph{i.e.}, as the fundamental group of the complement of the torus knot $T_{n,m}$ (see~\cite[Chapter~3]{Rolfsen}). As $T_{n,m}\cong T_{m,n}$, one has $G(n,m)\cong G(m,n)$. The most basic presentation of $G(n,m)$ is given by two generators $x,y$ and a single relation $x^n=y^m$. By~\cite[Example~4]{DP}, the monoid with the same presentation is known to be a Garside monoid with $\Delta=x^n=y^m$. In particular, the group $G(n,m)$ is a Garside group. Its center is known to be infinite cyclic, generated by $x^n=y^m$.  

Another Garside presentation of $G(n,m)$ is given by \begin{equation}\label{class_knot}\langle x_1, x_2, \dots, x_n \ \vert\ 
\underbrace{x_1 x_2 \cdots}_{m~\text{factors}} = \underbrace{x_2 x_3\cdots}_{m~\text{factors}} = \dots = \underbrace{x_n x_1 \cdots}_{m~\text{factors}}
\ \rangle,\end{equation}

where in the relations the indices are taken modulo $n$ if $n<m$. The monoid with the same presentation is indeed a Garside monoid by~\cite[Example~5]{DP}. Note that, as observed in~\cite[Section~6.4, Problem~10]{Dual}, this yields in fact two distinct Garside structures on $G(n,m)$, since $G(n,m)\cong G(m,n)$. As suggested in~\textit{loc. cit.}, for $n<m$, we may call the monoid defined by the presentation~\eqref{class_knot} the~\defn{classical} Garside monoid for~$G(n,m)$, and the monoid with the same presentation but with the roles of $n$ and $m$ reversed the~\defn{dual} Garside monoid for~$G(n,m)$. Indeed, in the cases where $G(n,m)$ is an Artin--Tits group, that is, for~$n=2$ and $m$ odd where it is isomorphic to the Artin--Tits group of type~$I_2(m)$, one recovers the classical and dual braid monoids. 

An alternative Garside structure for~$G(n,m)$, which is similar to the one given by~\eqref{class_knot} but distinct in general, can be found in~\cite[Proposition~4.1]{Picantin_torus}. Additional Garside structures for some specific torus knot groups can also be found in Section~5 of \emph{loc. cit.}.

For $m=n+1$, we will construct a new Garside structure on $G(n,m)$ in the next section. We will explain how the various presentations above are related in Section~\ref{link_pres} below.    

\section{A new Garside structure on $(n,n+1)$-torus knot groups}\label{sec:main}

We now define our main object of study. 

\subsection{Definition and several presentations of the monoid}\label{sec:pres}

\begin{definition}
Let $n\geq 1$. Consider the monoid $M_n$ defined by the presentation \begin{equation}\label{eq_main}\langle~ \rho_1, \rho_2, \dots, \rho_{n} \ \vert\ \rho_1 \rho_{n} \rho_i= \rho_{i+1} \rho_{n}~\text{for~}1\leq i <n~\rangle.\end{equation} We will denote the set of generators by~$\S$, and the above set of relations by $\R$, omitting the dependency on $n$. 
\end{definition}

The group with the same presentation is in fact isomorphic to the torus knot group $G(n,n+1)$; it is indeed straightforward to check the following: 

\begin{prop}\label{prop_isom_torus}
The map $\rho_i\mapsto x^iy^{-i}\text{~for~}1\leq i \leq n$ extends to a group isomorphism between the group with presentation~\eqref{eq_main} and the $(n,n+1)$-torus knot group $G(n,n+1)=\langle~ x,y \ \vert x^n=y^{n+1} ~\rangle$.
\end{prop}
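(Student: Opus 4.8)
The plan is to exhibit the map $\varphi:\rho_i\mapsto x^iy^{-i}$ and its candidate inverse explicitly, and then check compatibility with the two presentations on both sides. Write $P_n$ for the group presented by~\eqref{eq_main} with generators $\rho_1,\dots,\rho_n$, and $G=G(n,n+1)=\langle x,y\mid x^n=y^{n+1}\rangle$. First I would check that $\varphi$ is a well-defined homomorphism $P_n\to G$: since $P_n$ is defined by a presentation, it suffices to verify that the images $\varphi(\rho_i)=x^iy^{-i}$ satisfy each defining relation $\rho_1\rho_n\rho_i=\rho_{i+1}\rho_n$ for $1\le i<n$ in $G$. Substituting, the left-hand side becomes $xy^{-1}\cdot x^ny^{-n}\cdot x^iy^{-i}$; now use the relation $x^n=y^{n+1}$ of $G$, which also gives that $x^n=y^{n+1}$ is central in $G$ (a standard fact, recalled in Section~\ref{garside_torus}), so $x^ny^{-n}=y^{n+1}y^{-n}=y$ and this central element can be freely moved. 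Thus the left-hand side equals $xy^{-1}\cdot y\cdot x^iy^{-i}=x\cdot x^iy^{-i}=x^{i+1}y^{-i}$, while the right-hand side is $\varphi(\rho_{i+1}\rho_n)=x^{i+1}y^{-(i+1)}\cdot x^ny^{-n}=x^{i+1}y^{-(i+1)}\cdot y=x^{i+1}y^{-i}$. The two sides agree, so $\varphi$ is a well-defined homomorphism.

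Next I would construct the inverse. The natural guess is $\psi:G\to P_n$ with $x\mapsto \rho_n\rho_{n-1}^{-1}\cdots$ — more precisely one wants elements $X,Y\in P_n$ with $X^iY^{-i}=\rho_i$; taking $i=1$ and $i=n$ suggests $Y=X\rho_1^{-1}$ and $\rho_n=X^nY^{-n}$. A cleaner route: set $Y:=\rho_1^{-1}\rho_2\rho_1^{-1}\cdots$? Rather than guess, I would instead observe that $P_n$ is generated by $\rho_1$ and $\rho_n$ alone. Indeed, from the relation $\rho_{i+1}=\rho_1\rho_n\rho_i\rho_n^{-1}$ one gets inductively $\rho_i = (\rho_1\rho_n)^{i-1}\rho_1\rho_n^{-(i-1)}$ for all $i$, expressing every generator in terms of $\rho_1,\rho_n$; and no relation is lost because each original relation is just the recursion defining $\rho_{i+1}$. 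So $P_n$ has the two-generator presentation $\langle \rho_1,\rho_n\mid R'\rangle$ where $R'$ is whatever relation is forced by requiring the formula for $\rho_n$ itself (the $i=n-1$ case, which expresses $\rho_n$ in terms of $\rho_1,\rho_n$, i.e.\ $(\rho_1\rho_n)^{n-1}\rho_1 = \rho_n\cdot\rho_n^{n-1}=\rho_n^{?}$ — this needs care). The point is to massage this single relation into the form $\xi^n=\eta^{n+1}$ for suitable words $\xi,\eta$ in $\rho_1,\rho_n$; setting $\eta:=\rho_n$ and $\xi:=\rho_1\rho_n$ (matching $\varphi^{-1}$ of $x=\xi$, $y=\eta$, since $\varphi(\rho_1\rho_n)=xy^{-1}x^ny^{-n}=x$ and $\varphi(\rho_n)=x^ny^{-n}=y$) should do it, and then $\psi:x\mapsto\rho_1\rho_n,\ y\mapsto\rho_n$ is a well-defined homomorphism $G\to P_n$.

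Finally I would check $\psi\circ\varphi=\mathrm{id}$ and $\varphi\circ\psi=\mathrm{id}$ on generators. For $\varphi\circ\psi$: $x\mapsto\rho_1\rho_n\mapsto \varphi(\rho_1)\varphi(\rho_n)=xy^{-1}\cdot x^ny^{-n}=xy^{-1}\cdot y=x$, and $y\mapsto\rho_n\mapsto x^ny^{-n}=y$; done. For $\psi\circ\varphi$: $\rho_i\mapsto x^iy^{-i}\mapsto (\rho_1\rho_n)^i\rho_n^{-i}$, and one must check $(\rho_1\rho_n)^i\rho_n^{-i}=\rho_i$ in $P_n$, which is exactly the inductive identity $\rho_i=(\rho_1\rho_n)^{i-1}\rho_1\rho_n^{-(i-1)}$ after noting $(\rho_1\rho_n)^i\rho_n^{-i}=(\rho_1\rho_n)^{i-1}\rho_1\rho_n\rho_n^{-i}=(\rho_1\rho_n)^{i-1}\rho_1\rho_n^{-(i-1)}$. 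This identity I would prove by induction on $i$ using the defining relation. The main obstacle is the bookkeeping in the middle step: verifying that eliminating $\rho_2,\dots,\rho_{n-1}$ turns the relation set $\R$ into precisely the single relation $(\rho_1\rho_n)^n=\rho_n^{n+1}$ (equivalently $\xi^n=\eta^{n+1}$), with no spurious extra relations and none missing — a Tietze-transformation argument that is routine but where an off-by-one in the exponents is easy to make; everything else is a direct substitution using centrality of $x^n=y^{n+1}$.
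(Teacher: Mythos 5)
The paper does not actually write out a proof of this proposition (it is dismissed as ``straightforward to check''), so there is no official argument to compare against; your strategy---verify that $\varphi(\rho_i)=x^iy^{-i}$ satisfies the relations of~\eqref{eq_main}, exhibit $\psi\colon x\mapsto\rho_1\rho_n$, $y\mapsto\rho_n$, and check both composites on generators---is the natural one, and it does go through. Your verification for $\varphi$ is correct (and you do not even need centrality of $x^n$: the relation alone gives $x^ny^{-n}=y^{n+1}y^{-n}=y$). The one place where you hedge (``this needs care'', the unresolved exponent in $\rho_n^{?}$) is also the one place where you overcomplicate. There is no need for a Tietze elimination of $\rho_2,\dots,\rho_{n-1}$, nor to identify a complete two-generator presentation of the group $P_n$ presented by~\eqref{eq_main}: to see that $\psi$ is a well-defined homomorphism it suffices to check the \emph{single} identity $(\rho_1\rho_n)^n=\rho_n^{n+1}$ in $P_n$. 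This follows from the identity $\rho_1(\rho_n\rho_1)^k=\rho_{k+1}\rho_n^k$ for $1\le k\le n-1$, proved by induction on $k$: $\rho_1(\rho_n\rho_1)^k=\rho_1\rho_n\cdot\rho_1(\rho_n\rho_1)^{k-1}=\rho_1\rho_n\rho_k\rho_n^{k-1}=\rho_{k+1}\rho_n^k$. Taking $k=n-1$ gives $\rho_1(\rho_n\rho_1)^{n-1}=\rho_n^n$, hence $(\rho_1\rho_n)^n=\rho_1(\rho_n\rho_1)^{n-1}\rho_n=\rho_n^{n+1}$; the same identity yields $(\rho_1\rho_n)^i=\rho_i\rho_n^i$, which is exactly what your check of $\psi\circ\varphi=\mathrm{id}$ requires. (This computation is in fact carried out later in the paper, as~\eqref{eq_move} in the proof of Proposition~\ref{garside_divisors} and as~\eqref{eq:in} in Lemma~\ref{rel_bis}, there in the monoid $M_n$; it is valid verbatim in the group.) So the ``off-by-one'' you worry about resolves cleanly, and with that step filled in your proof is complete.
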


Note that $M_2=\Sigma_2$, the exotic Garside monoid for~$\mathcal{B}_3$ given in Example~\ref{ex_2}. 


It is well-known that the group~$G(n,n+1)$ is an extension of $\mathcal{B}_{n+1}$. In terms of the above presentation the map is defined as follows:  

\begin{prop}\label{prop:surjective}
The assignment $\rho_i\mapsto \sigma_1 \sigma_2 \cdots \sigma_i,$ for $i=1, \dots, n$, extends to a surjective group homomorphism $\varphi_n: G(n,n+1) \longrightarrow \mathcal{B}_{n+1}$. 
\end{prop}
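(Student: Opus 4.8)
The plan is to verify that the assignment $\rho_i \mapsto \sigma_1\sigma_2\cdots\sigma_i$ respects the defining relations of the presentation~\eqref{eq_main} of $G(n,n+1)$, and then to check surjectivity. Once the relations are verified, the universal property of a group given by generators and relations immediately yields a well-defined homomorphism $\varphi_n$; surjectivity is then easy since the images of the $\rho_i$ generate all of $\mathcal{B}_{n+1}$.

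First I would set $c_i := \sigma_1\sigma_2\cdots\sigma_i$ for $1 \leq i \leq n$, so that $\varphi_n(\rho_i) = c_i$. The relations to check are $c_1 c_n c_i = c_{i+1} c_n$ for $1 \leq i < n$. The key computational input is the commutation relation: $\sigma_1$ commutes with $\sigma_j$ for all $j \geq 3$, and more generally $c_1 = \sigma_1$ commutes with the subword $\sigma_3\sigma_4\cdots\sigma_n$ appearing inside $c_n = \sigma_1\sigma_2\cdots\sigma_n$. The cleanest way to proceed is to first establish the identity $c_1 c_n = \sigma_1 \sigma_1 \sigma_2 \sigma_3 \cdots \sigma_n$ and then to understand how $\sigma_1 c_n$ acts; alternatively, one shows directly that $c_1 c_n c_i$ and $c_{i+1} c_n$ are equal by rewriting both sides using only braid and commutation relations. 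Concretely, I expect the argument to hinge on an identity of the form $c_n c_i = c_{i+1} \tau$ or $\sigma_1 c_n c_i = c_{i+1} c_n$ proved by moving the initial $\sigma_1$ through the word $c_n c_i$ using the braid relation $\sigma_1\sigma_2\sigma_1 = \sigma_2\sigma_1\sigma_2$ once and commutation relations elsewhere. A useful reformulation: since $\rho_i$ maps to $x^i y^{-i}$ under the isomorphism of Proposition~\ref{prop_isom_torus}, one may instead exhibit elements $X, Y \in \mathcal{B}_{n+1}$ with $X^n = Y^{n+1}$ and $X^i Y^{-i} = c_i$, which pins down $X = c_1^{-1} c_n$-type expressions; but the direct check on the presentation~\eqref{eq_main} is likely shorter.

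The main obstacle is purely the bookkeeping in the braid-relation manipulation: one must carefully track how the leading $\sigma_1$ in $c_1 c_n c_i$ interacts with the prefix $\sigma_1\sigma_2$ of $c_n$ (using the braid relation) versus the tail $\sigma_3\cdots\sigma_n$ (using commutation), and confirm the leftover matches $c_{i+1} c_n$ exactly. I would organize this by induction on $i$, or by first proving the single identity $\sigma_1 \cdot (\sigma_1\sigma_2\cdots\sigma_n) = (\sigma_1\sigma_2) \cdot (\sigma_1\sigma_3\sigma_4\cdots\sigma_n)$ or similar and then feeding it into the general relation. This is a routine but slightly delicate computation in $\mathcal{B}_{n+1}$, and I would present it as a short lemma followed by the one-line deduction.

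For surjectivity: the image of $\varphi_n$ contains $c_1 = \sigma_1$ and $c_i = \sigma_1\cdots\sigma_i$ for all $i \leq n$, hence contains $c_{i-1}^{-1} c_i = (\sigma_1\cdots\sigma_{i-1})^{-1}(\sigma_1\cdots\sigma_i) = \sigma_i$ for each $i$, so the image contains the standard Artin generators $\sigma_1, \dots, \sigma_n$ of $\mathcal{B}_{n+1}$ and is therefore all of $\mathcal{B}_{n+1}$. This last step requires no further work.
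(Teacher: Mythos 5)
Your proposal is correct and follows essentially the same route as the paper: the paper verifies the relations $S_1 S_n S_i = S_{i+1} S_n$ for $S_i = \sigma_1\sigma_2\cdots\sigma_i$ by induction on $i$, using precisely the braid/commutation bookkeeping you describe (with the small caveat that the inductive step invokes the braid relation between $\sigma_i$ and $\sigma_{i+1}$, not only $\sigma_1\sigma_2\sigma_1=\sigma_2\sigma_1\sigma_2$). Surjectivity is exactly your one-liner, since $\sigma_i = (\sigma_1\cdots\sigma_{i-1})^{-1}(\sigma_1\cdots\sigma_i)$ lies in the image.
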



\begin{proof}
It suffices to show that the elements $S_i:=\sigma_1 \sigma_2 \cdots \sigma_i\in \mathcal{B}_{n+1}$ ($i=1, \dots, n$) satisfy the defining relations of~\eqref{eq_main}. We show that $S_1 S_n S_i=S_{i+1} S_n$ for all $i=1, \dots, n-1$ by induction on  $1\leq i \leq n-1$. We have
\begin{align*}
S_1 S_{n} S_1&=\sigma_1 (\sigma_1 \sigma_2 \cdots \sigma_{n}) \sigma_1=\sigma_1 \sigma_1 \sigma_2 \sigma_1 (\sigma_3 \sigma_4 \cdots \sigma_{n})\\&=\sigma_1 \sigma_2 (\sigma_1 \sigma_2 \sigma_3\cdots \sigma_{n})=S_2 S_{n},
\end{align*}
hence the result holds for $i=1$. Now let $1< i\leq n-1$. By induction we have
\begin{align*}
S_1 S_{n} S_i&=S_1 S_{n} S_{i-1} \sigma_i=S_i S_{n} \sigma_i= S_i (\sigma_1 \sigma_2 \cdots \sigma_{n}) \sigma_i\\
&=S_i (\sigma_1 \sigma_2 \cdots \sigma_i\sigma_{i+1}) \sigma_i (\sigma_{i+2} \cdots \sigma_{n})\\&=S_i (\sigma_1 \sigma_2 \cdots \sigma_{i-1}) \sigma_{i+1} (\sigma_i \sigma_{i+1} \cdots \sigma_{n})= S_i \sigma_{i+1} S_{n}= S_{i+1} S_{n}, 
\end{align*}
which concludes the proof. 
\end{proof}

\begin{lemma}\label{noeth}
The map $\lambda :\{\rho_1, \rho_2, \dots, \rho_{n}\}\longrightarrow \mathbb{Z}_{\geq 0}$, $\rho_i\mapsto i$ extends to a uniquely defined length function $\lambda$ on $M_n$ satisfying $\lambda(ab)=\lambda(a)+\lambda(b)$ for all $a,b\in M_n$. In particular, the divisibility in $M_n$ is Noetherian, and $M_n$ is both left- and right-Noetherian.  
\end{lemma}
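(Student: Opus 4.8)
The statement to prove is Lemma~\ref{noeth}: that $\lambda(\rho_i) = i$ extends to a well-defined length function on $M_n$ which is additive, and that consequently the divisibility in $M_n$ is Noetherian. The natural approach is to observe that the defining relations in $\R$ are length-homogeneous with respect to the weighting $\rho_i \mapsto i$, so that $\lambda$ is well-defined as a monoid homomorphism $M_n \to (\mathbb{Z}_{\geq 0}, +)$. Concretely, I would first define $\lambda$ on the free monoid $\S^*$ by $\lambda(\rho_{j_1} \rho_{j_2} \cdots \rho_{j_k}) = j_1 + j_2 + \cdots + j_k$, which is manifestly additive. Then I would check that $\lambda$ is constant on each relation: the relation $\rho_1 \rho_n \rho_i = \rho_{i+1}\rho_n$ for $1 \leq i < n$ has left-hand side of weight $1 + n + i$ and right-hand side of weight $(i+1) + n = i + n + 1$, and these are equal. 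Hence $\lambda$ descends to a well-defined map on the quotient monoid $M_n$, still additive, and taking the prescribed values on the generators.

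Once $\lambda$ is in hand, the remaining assertions follow from Definition~\ref{def:noeth}. For the Noetherianity of divisibility, I would note that $\lambda$ satisfies the two required conditions: additivity gives $\lambda(ab) = \lambda(a) + \lambda(b) \geq \lambda(a) + \lambda(b)$ (in fact with equality, which is even stronger than the inequality required), and $\lambda(a) \neq 0$ whenever $a \neq 1$, since any nonempty word in the $\rho_i$ has strictly positive weight (each $\rho_i$ contributes at least $1$). Therefore the divisibility in $M_n$ is Noetherian in the sense of Definition~\ref{def:noeth}, and the final sentence of that definition records that Noetherian divisibility implies both left- and right-Noetherianity, so $M_n$ is both.

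I do not expect any genuine obstacle here: the entire content is the homogeneity check, which is a one-line arithmetic verification $1 + n + i = (i+1) + n$. The only point requiring a moment's care is making sure the extension of $\lambda$ from the generating set to all of $M_n$ is \emph{well-defined} on the quotient rather than merely on the free monoid — but this is exactly what the homogeneity of the relations guarantees, and there is nothing subtle about it. The uniqueness of the extension is automatic: an additive map on a monoid is determined by its values on any generating set.

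\begin{proof}
Define $\lambda$ on the free monoid $\S^*$ by declaring $\lambda(\rho_i) = i$ for each generator and extending additively, so that $\lambda(\rho_{j_1}\rho_{j_2}\cdots\rho_{j_k}) = j_1 + j_2 + \cdots + j_k$ and $\lambda(w w') = \lambda(w) + \lambda(w')$ for all $w, w' \in \S^*$. Each defining relation in $\R$ preserves this weight: the relation $\rho_1 \rho_n \rho_i = \rho_{i+1}\rho_n$ (for $1 \leq i < n$) has both sides of weight $1 + n + i = (i+1) + n$. Hence $\lambda$ is constant on the congruence classes generated by $\R$, and therefore descends to a well-defined map $\lambda: M_n \to \mathbb{Z}_{\geq 0}$ with $\lambda(ab) = \lambda(a) + \lambda(b)$ for all $a, b \in M_n$ and $\lambda(\rho_i) = i$. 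This length function is the unique one with these properties, as an additive function on $M_n$ is determined by its values on the generating set $\S$.

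In particular $\lambda(ab) = \lambda(a) + \lambda(b) \geq \lambda(a) + \lambda(b)$, and if $a \neq 1$ then $a$ is represented by a nonempty word in the $\rho_i$'s, whence $\lambda(a) \geq 1 > 0$. By Definition~\ref{def:noeth}, the divisibility in $M_n$ is Noetherian, and consequently $M_n$ is both left- and right-Noetherian.
\end{proof}
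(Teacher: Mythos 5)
Your proof is correct and follows exactly the paper's argument: the only content is the homogeneity check $\lambda(\rho_1\rho_n\rho_i)=1+n+i=(i+1)+n=\lambda(\rho_{i+1}\rho_n)$, after which $\lambda$ descends to an additive, positive length function on $M_n$ and Noetherianity follows from Definition~\ref{def:noeth}. No issues.
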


\begin{proof}
It suffices to show that the extension of $\lambda$ to $\S^*$ takes the same value on each side of any given relation in $\R$, in other words, that the relations in $\R$ are homogeneous with respect to $\lambda$. This is clear, as $\lambda(\rho_1 \rho_n \rho_i)=n+i+1=\lambda(\rho_{i+1} \rho_n)$ for all $1\leq i \leq n-1$.  
\end{proof}

Unfortunately, the cancellativity criteria that we recalled in Subsection~\ref{cancel} do \textit{not} work with the presentation $\langle\S, \R\rangle$ of $M_n$. We need to enlarge the set $\R$ of relations, thereby making it redundant, to be able to apply such criteria. We will need two distinct enlarged sets of relations, one to show left-cancellativity, the other one to show right-cancellativity. We introduce them in the following two Lemmata.  

\begin{lemma}\label{rel_bis}
Let $1\leq i<j \leq n$. In $M_n$ (and hence in $G(n,n+1)$), we have $$\rho_i \rho_{n}^i \rho_{j-i}=\rho_j \rho_{n}^i.$$
\end{lemma}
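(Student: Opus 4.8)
The plan is to prove the identity $\rho_i \rho_n^i \rho_{j-i} = \rho_j \rho_n^i$ for $1 \leq i < j \leq n$ by induction on $i$. The base case $i=1$ is precisely the defining relation $\rho_1 \rho_n \rho_{j-1} = \rho_j \rho_n$ from $\R$ (taking the defining relation with index $j-1$, which is legitimate since $1 \leq j-1 \leq n-1$). So the content is entirely in the inductive step.

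For the inductive step, suppose the claimed identity holds for $i-1$ (for all admissible $j$), and fix $i < j \leq n$. The idea is to peel off one factor of $\rho_n$ and reduce the exponent. Write $\rho_i \rho_n^i \rho_{j-i} = \rho_i \rho_n^{i-1} (\rho_n \rho_{j-i})$ — but this is awkward because the defining relations have $\rho_n$ on the right, not the left. A cleaner route: start from the defining relation in the form $\rho_{i} \rho_n = $ (something). Actually the defining relation reads $\rho_1 \rho_n \rho_k = \rho_{k+1}\rho_n$, which we can rewrite as $\rho_{k+1}\rho_n = \rho_1 \rho_n \rho_k$, i.e. $\rho_n$ migrates. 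The better manipulation is: apply the induction hypothesis to rewrite a sub-word. Concretely, I expect the computation to go
\[
\rho_i \rho_n^i \rho_{j-i} = \rho_i \rho_n \cdot \rho_n^{i-1}\rho_{j-i}.
\]
Now use the defining relation $\rho_1 \rho_n \rho_{i-1} = \rho_i \rho_n$, i.e. $\rho_i \rho_n = \rho_1 \rho_n \rho_{i-1}$, to get
\[
\rho_i \rho_n^i \rho_{j-i} = \rho_1 \rho_n \rho_{i-1} \rho_n^{i-1} \rho_{j-i} = \rho_1 \rho_n \bigl(\rho_{i-1}\rho_n^{i-1}\rho_{(j-1)-(i-1)}\bigr),
\]
since $j - i = (j-1)-(i-1)$. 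The parenthesized word is exactly the left-hand side of the induction hypothesis applied to the pair $(i-1, j-1)$ — which is admissible because $1 \leq i-1 < j-1 \leq n$ — so it equals $\rho_{j-1}\rho_n^{i-1}$. Therefore
\[
\rho_i \rho_n^i \rho_{j-i} = \rho_1 \rho_n \rho_{j-1} \rho_n^{i-1} = \rho_j \rho_n \cdot \rho_n^{i-1} = \rho_j \rho_n^i,
\]
using the defining relation $\rho_1 \rho_n \rho_{j-1} = \rho_j \rho_n$ once more (valid since $1 \leq j-1 \leq n-1$). This closes the induction.

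The only real thing to watch is bookkeeping on the index ranges: in the inductive step one needs $i \geq 2$ so that $i-1 \geq 1$, and one needs $j - 1 \leq n$ and $i-1 < j-1$, all of which follow from $1 < i < j \leq n$; one also needs $j - 1 \leq n-1$ to invoke the defining relation, which holds since $j \leq n$. So there is no genuine obstacle here — the statement is essentially a formal consequence of the defining relations, and the main (mild) subtlety is simply making sure each invocation of a relation in $\R$ or of the induction hypothesis uses indices within the permitted range. I would present the base case in one line and then lay out the displayed chain of equalities above for the inductive step, annotating each equality with the relation used.
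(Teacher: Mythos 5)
Your proof is correct and is essentially the same computation as the paper's: the paper rewrites both sides to the common word $(\rho_1\rho_n)^i\rho_{j-i}$ by iterating the defining relation in the form $\rho_{k+1}\rho_n=\rho_1\rho_n\rho_k$, which is exactly the unfolding your induction on $i$ performs one step at a time. Your index bookkeeping (needing $i\geq 2$, $i-1<j-1\leq n$, and $j-1\leq n-1$ at each invocation) is accurate, so there is no gap.
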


\begin{proof}
Note that when $i=1$, the relations are just the defining relations of $M_n$. Assume $i>1$. As $\rho_k \rho_{n}=\rho_1 \rho_{n} \rho_{k-1}$ holds for $2\leq k \leq n$, we have \begin{eqnarray}\label{eq:in}\rho_i \rho_{n}^i= \rho_1 \rho_{n} \rho_{i-1} \rho_{n}^{i-1}=\rho_1 \rho_{n} \rho_1 \rho_{n} \rho_{i-2} \rho_{n}^{i-2}= \ldots = (\rho_1 \rho_{n})^i.\end{eqnarray} Hence we get \begin{eqnarray}\label{eq_1}\rho_i \rho_{n}^i \rho_{j-i}=(\rho_1 \rho_{n})^i \rho_{j-i}.\end{eqnarray} Similarly, as $j>i$, we have \begin{eqnarray}\label{eq_2}\rho_j \rho_{n}^i= \rho_1 \rho_{n} \rho_{j-1} \rho_{n}^{i-1}=\rho_1 \rho_{n} \rho_1 \rho_{n} \rho_{j-2} \rho_{n}^{i-2}=\ldots =(\rho_1 \rho_{n})^i \rho_{j-i}.\end{eqnarray} Putting~\eqref{eq_1} and \eqref{eq_2} together we get $\rho_i \rho_{n}^i \rho_{j-i}=(\rho_1 \rho_{n})^i \rho_{j-i}=\rho_j \rho_{n}^i$. This concludes the proof.      
\end{proof}

\begin{lemma}\label{rel_bis_2}
Let $1\leq i<j \leq n$. In $M_n$ (and hence in $G(n,n+1)$), we have $$(\rho_1 \rho_n)^{n-j+1} \rho_i=\rho_{n-j+i+1} (\rho_1 \rho_n)^{n-j} \rho_j.$$
\end{lemma}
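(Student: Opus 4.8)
The plan is to prove the identity
\[
(\rho_1\rho_n)^{n-j+1}\,\rho_i = \rho_{n-j+i+1}\,(\rho_1\rho_n)^{n-j}\,\rho_j
\]
by reducing it to the defining relations of $M_n$, using the computation already carried out in the proof of Lemma~\ref{rel_bis}. The key observation is equation~\eqref{eq:in}, which I would re-derive in the form $\rho_k\rho_n^{\,k}=(\rho_1\rho_n)^k$, valid for all $1\le k\le n$; dually (or by the same telescoping argument run the other way) one gets that $(\rho_1\rho_n)^k$ can be rewritten by peeling factors $\rho_1\rho_n$ off either end against $\rho$'s of varying index. So the first step is to establish the rewriting rule that will do all the work: from the defining relation $\rho_1\rho_n\rho_i=\rho_{i+1}\rho_n$ (for $1\le i\le n-1$), read as $\rho_1\rho_n\cdot\rho_i = \rho_{i+1}\cdot\rho_n$, one can push a single factor $\rho_1\rho_n$ to the right past a generator $\rho_i$ at the cost of turning the head generator $\rho_1$ into $\rho_{i+1}$ and replacing $\rho_i$ by $\rho_n$. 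Iterating this is exactly what produces the index $n-j+i+1$ on the right-hand side.

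Concretely, the second step is an induction on $n-j$ (equivalently, a downward induction on $j$ from $j=n$). For the base case $j=n$: the claim reads $(\rho_1\rho_n)\rho_i=\rho_{i+1}\rho_n$, which is precisely a defining relation in $\R$ (here $1\le i<j=n$, so $1\le i\le n-1$, and indeed $n-j+i+1=i+1$, $n-j=0$). For the inductive step, suppose the identity holds for $j+1$ (with $j+1\le n$), i.e. $(\rho_1\rho_n)^{n-j}\rho_i=\rho_{n-j+i}(\rho_1\rho_n)^{n-j-1}\rho_{j+1}$ for all $i<j+1$. Then for $i<j$ I would write
\[
(\rho_1\rho_n)^{n-j+1}\rho_i
=(\rho_1\rho_n)\,(\rho_1\rho_n)^{n-j}\rho_i
=(\rho_1\rho_n)\,\rho_{n-j+i}\,(\rho_1\rho_n)^{n-j-1}\,\rho_{j+1},
\]
and now apply the defining relation $(\rho_1\rho_n)\rho_{n-j+i}=\rho_{n-j+i+1}\rho_n$ — valid since $n-j+i<n$ because $i<j$ — to get $\rho_{n-j+i+1}\,\rho_n\,(\rho_1\rho_n)^{n-j-1}\,\rho_{j+1} = \rho_{n-j+i+1}\,(\rho_n\rho_1)^{n-j-1}\rho_n\,\rho_{j+1}$. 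Finally I would reabsorb the trailing $\rho_n\rho_{j+1}$: since $j+1>1$ (as $j\ge i\ge 1$), the relation $\rho_{j+1}\rho_n=\rho_1\rho_n\rho_j$ from Lemma~\ref{rel_bis} (the case $i=1$) — or rather its mirror, $\rho_n\rho_{j+1}$ handled via $\rho_{k}\rho_n=\rho_1\rho_n\rho_{k-1}$ — lets me rewrite $(\rho_n\rho_1)^{n-j-1}\rho_n\rho_{j+1}=(\rho_1\rho_n)^{n-j}\rho_j$, yielding $\rho_{n-j+i+1}(\rho_1\rho_n)^{n-j}\rho_j$, as desired.

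I expect the main obstacle to be bookkeeping: getting the index arithmetic exactly right (the shift $n-j+i+1$ versus $n-j+i$, and making sure every application of a defining relation stays in the legal range $1\le\text{index}<n$), and correctly handling the two places where factors $\rho_1\rho_n$ are associated as $(\rho_1\rho_n)^k$ versus $\rho_1(\rho_n\rho_1)^{k-1}\rho_n$ when peeling from the left versus the right. An alternative, possibly cleaner, route would be to avoid induction entirely: use $\rho_i\rho_n^{\,i}=(\rho_1\rho_n)^i$ (equation~\eqref{eq:in}) to convert both sides into words of the form $(\rho_1\rho_n)^{(\cdot)}\rho_{(\cdot)}$ and $\rho_{(\cdot)}(\rho_1\rho_n)^{(\cdot)}$ and then match them directly via a single telescoping computation, much as in the proof of Lemma~\ref{rel_bis}; I would try that first and fall back on the induction above if the direct manipulation gets unwieldy.
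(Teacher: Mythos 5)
Your ``alternative, possibly cleaner, route'' at the end is exactly what the paper does: it applies the defining relation $\rho_1\rho_n\rho_k=\rho_{k+1}\rho_n$ repeatedly to telescope $(\rho_1\rho_n)^{n-j+1}\rho_i$ down to $\rho_{n-j+i+1}\rho_n^{n-j+1}$ and, separately, to telescope $(\rho_1\rho_n)^{n-j}\rho_j$ down to $\rho_n^{n-j+1}$; concatenating the two gives the identity in one pass, with no induction. Your primary plan, the downward induction on $j$, is a legitimately different organization of the same rewriting and does close, but the step where you ``reabsorb the trailing $\rho_n\rho_{j+1}$'' is not justified as you state it: you invoke a ``mirror'' of $\rho_k\rho_n=\rho_1\rho_n\rho_{k-1}$, but no single relation of the presentation (nor of $\R'$ or $\R''$) applies to the two-letter subword $\rho_n\rho_{j+1}$ on its own when $j+1<n$ --- every relation has the varying-index generator to the \emph{left} of the $\rho_n$ it interacts with. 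What you actually need at that point is the identity $\rho_n(\rho_1\rho_n)^{n-j-1}\rho_{j+1}=(\rho_1\rho_n)^{n-j}\rho_j$, and the only way to obtain it from the defining relations is to collapse $(\rho_1\rho_n)^{n-j-1}\rho_{j+1}$ to $\rho_n^{n-j}$ and $(\rho_1\rho_n)^{n-j}\rho_j$ to $\rho_n^{n-j+1}$ by the iterated rewriting $\rho_1\rho_n\rho_k\to\rho_{k+1}\rho_n$ --- which is precisely the paper's computation \eqref{eq_4}. Once that identity is granted the induction finishes cleanly, but since proving it already contains the entire telescoping argument, the inductive wrapper buys nothing over the direct computation, which is presumably why the paper proceeds directly. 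The index bookkeeping in your inductive step (legality of $(\rho_1\rho_n)\rho_{n-j+i}=\rho_{n-j+i+1}\rho_n$ because $i<j$ forces $n-j+i<n$) is correct.
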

\begin{proof}
Note that when $j=n$, the claimed relations are just the defining relations of $M_n$. Assume $j<n$. As $\rho_1 \rho_n \rho_k=\rho_{k+1} \rho_n$ holds for $1\leq k<n$, we have \begin{equation}\label{eq_3}(\rho_1 \rho_n)^{n-j+1}\rho_i=(\rho_1 \rho_n)^{n-j} \rho_{i+1} \rho_n = (\rho_1 \rho_n)^{n-j-1} \rho_{i+2} \rho_n^2 =\ldots = \rho_{n-j+i+1}\rho_n^{n-j+1}.\end{equation}
Applying the same relation, we also get \begin{equation}\label{eq_4}(\rho_1 \rho_n)^{n-j} \rho_j= (\rho_1 \rho_n)^{n-j-1} \rho_{j+1} \rho_n=\ldots=\rho_n^{n-j+1}.\end{equation}
Putting~\eqref{eq_3} and \eqref{eq_4} together we get $$\rho_{n-j+i+1}(\rho_1 \rho_n)^{n-j} \rho_j=\rho_{n-j+i+1}\rho_n^{n-j+1} =(\rho_1 \rho_n)^{n-j+1}\rho_i,$$ which concludes the proof.   
\end{proof}

\begin{prop}
The monoid $M_n$ has two presentations $\langle \S, \R'\rangle$ and $\langle\S, \R''\rangle$, where $\S$ is as before the set $\{\rho_1, \rho_2, \dots, \rho_n\}$ and $\R'$ (respectively $\R''$) is the set of relations given in the statement of Lemma~\ref{rel_bis} (respectively Lemma~\ref{rel_bis_2}). 
\end{prop}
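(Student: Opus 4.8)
The plan is to show that each of the relation sets $\R'$ and $\R''$ is equivalent to the original set $\R$ in the sense that they generate the same congruence on $\S^*$; since $M_n$ is by definition the quotient of $\S^*$ by the congruence generated by $\R$, this immediately gives that $\langle \S, \R'\rangle$ and $\langle \S, \R''\rangle$ are also presentations of $M_n$. Equivalence of two sets of relations is proved by a standard two-way argument: first show $\R \subseteq \langle \R'\rangle$ (every original relation is a consequence of the new relations), then show $\R' \subseteq \langle \R\rangle$ (every new relation is a consequence of the original ones), and similarly for $\R''$.

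For $\R'$: the relations of $\R'$ are $\rho_i \rho_n^i \rho_{j-i} = \rho_j \rho_n^i$ for $1 \leq i < j \leq n$. The inclusion $\R' \subseteq \langle \R\rangle$ is precisely the content of Lemma~\ref{rel_bis}, which derives these relations inside $M_n$ using only the defining relations $\R$. For the reverse inclusion, I would observe that specializing $i=1$ in the relation $\rho_i \rho_n^i \rho_{j-i} = \rho_j \rho_n^i$ gives exactly $\rho_1 \rho_n \rho_{j-1} = \rho_j \rho_n$, i.e. the defining relation of $M_n$ indexed by $i' = j-1$ (as $j$ ranges over $2,\dots,n$, the index $j-1$ ranges over $1,\dots,n-1$, so we recover all of $\R$). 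Hence $\R \subseteq \R' \subseteq \langle\R'\rangle$ trivially. So the two sets generate the same congruence, and $\langle\S,\R'\rangle$ presents $M_n$.

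For $\R''$: the relations of $\R''$ are $(\rho_1 \rho_n)^{n-j+1} \rho_i = \rho_{n-j+i+1}(\rho_1\rho_n)^{n-j}\rho_j$ for $1 \leq i < j \leq n$. Again $\R'' \subseteq \langle\R\rangle$ is exactly Lemma~\ref{rel_bis_2}. For $\R \subseteq \langle\R''\rangle$, I would set $j=n$ in the $\R''$ relation: then $n-j+1 = 1$ and $n-j = 0$, so the relation becomes $\rho_1 \rho_n \rho_i = \rho_{i+1}\rho_n$, which is precisely the defining relation of $M_n$ indexed by $i$, for $1 \leq i < n$. Thus all of $\R$ sits inside $\R''$, and the congruences coincide, so $\langle\S,\R''\rangle$ presents $M_n$ as well.

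The argument is essentially bookkeeping: the only real work, namely verifying that the enlarged relations hold in $M_n$, has already been done in Lemmata~\ref{rel_bis} and~\ref{rel_bis_2}, and the reverse direction is the trivial observation that $\R$ is literally a sub-family of each enlarged set (obtained by the specializations $i=1$ and $j=n$ respectively). The only point requiring a moment of care is checking that these specializations sweep out \emph{all} of $\R$ and not just part of it, which amounts to the index range check carried out above. There is no serious obstacle here; this proposition is a packaging statement setting up the two presentations to which the cancellativity criteria of Subsection~\ref{cancel} will be applied (one for left-cancellativity, one for right-cancellativity).
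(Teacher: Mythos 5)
Your argument is correct and is essentially the paper's own proof: both directions reduce to Lemmata~\ref{rel_bis} and~\ref{rel_bis_2} for the forward inclusion, and to the observation (already noted in those lemmas) that the specializations $i=1$ and $j=n$ recover all of $\R$ for the reverse inclusion. Your version merely spells out the index bookkeeping that the paper leaves implicit.
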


\begin{proof}
We have seen in Lemmata~\ref{rel_bis}, \ref{rel_bis_2} that all the relations in $\R'$, $\R''$ follow from the relations in $\R$, and as they contain all the relations in $\mathcal{R}$ and $M_n=\langle\S, \R\rangle$, the claim is immediate. 
\end{proof}

\subsection{Cancellativity}

\subsubsection{Left-cancellativity} In order to show that the monoid $M_n$ is left-cancellative, we will apply Proposition~\ref{cancellative_criterion} using the presentation~$\langle\mathcal{S}, \mathcal{R}'\rangle$ defined above, which is right-complemented. The presentation~$\langle \S, \R\rangle$ is also right-complemented, but it is easy to see that the $\theta$-cube condition fails for this presentation. 

Note that in the presentation~$\langle \S, \mathcal{R}'\rangle$, we have precisely one relation for each pair of indices~$i,j\in\{1,2,\dots, n\}$, $i<j$, namely $\rho_i \rho_n^i \rho_{j-i}=\rho_j \rho_n^i$. Hence $\theta$ is defined over all~$\mathcal{S}\times\mathcal{S}$, and for $i<j$ we have \[\theta(\rho_i, \rho_j)=\rho_n^i \rho_{j-i},~~\theta(\rho_j, \rho_i)=\rho_n^i.\]
\begin{lemma}\label{cube_right}
The presentation $\langle\mathcal{S}, \mathcal{R}'\rangle$ satisfies the sharp $\theta$-cube condition for every triple $(\rho_i, \rho_j, \rho_k)$ of pairwise distinct generators in $\mathcal{S}$. 
\end{lemma}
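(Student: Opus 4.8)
The plan is to verify the sharp $\theta$-cube condition directly by a case analysis on the relative order of the three distinct indices $i,j,k$. Since the condition is symmetric in the last two arguments of $\theta$ (the two quantities $\theta(\theta(a,b),\theta(a,c))$ and $\theta(\theta(b,a),\theta(b,c))$ simply swap), and since we are free to name the generators, it suffices to treat the triple $(\rho_a,\rho_b,\rho_c)$ in the three essentially different configurations obtained by choosing which of the three indices plays each role; concretely I will set $p<q<r$ and check the three triples $(\rho_p,\rho_q,\rho_r)$, $(\rho_q,\rho_p,\rho_r)$, $(\rho_r,\rho_p,\rho_q)$. First I would record, from the displayed formulas just above the lemma, the values $\theta(\rho_i,\rho_j)=\rho_n^i\rho_{j-i}$ and $\theta(\rho_j,\rho_i)=\rho_n^i$ for $i<j$, together with the convention $\theta(\rho_i,\rho_i)=1$, and then extend $\theta$ to the relevant words in $\S^*$ using the recursive rules \eqref{c}--\eqref{d}; in particular I will need $\theta$ evaluated on arguments of the form $\rho_n^s\rho_t$, which by repeated use of $\theta(ab,c)=\theta(b,\theta(a,c))$ and $\theta(a,bc)=\theta(a,b)\theta(\theta(b,a),c)$ reduces to the known values on pairs of generators.

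The key computational input is a clean formula for $\theta$ on pairs of the shape that arise, namely $\theta(\rho_n^a\rho_b,\rho_n^c\rho_d)$ and, after one more iteration, expressions built from these. I expect that in each case the two relevant $\theta$-values will both reduce — using only the relations $\R'$, i.e. $\rho_i\rho_n^i\rho_{j-i}=\rho_j\rho_n^i$ and its consequences, most usefully the identity $\rho_i\rho_n^i=(\rho_1\rho_n)^i$ from \eqref{eq:in} in the proof of Lemma~\ref{rel_bis} — to a common normal word, in fact literally to the same word once one pushes all the $\rho_n$'s and the $(\rho_1\rho_n)$ blocks to the standardized side. So for each of the three configurations I would: (1) compute $\theta(\rho_p,\rho_q)$, $\theta(\rho_q,\rho_p)$ etc. from the table; (2) feed these as the second and third arguments into $\theta$ again, using the recursion to unwind $\theta(\rho_n^s\rho_t,\text{word})$; (3) simplify both resulting words and exhibit them as equal. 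Because the statement asks for the \emph{sharp} cube condition, I must make sure the two sides agree as words in $\S^*$ after applying the recursion rules honestly, not merely as elements of $M_n$; the recursion rules \eqref{c}--\eqref{d} are designed precisely so that this works out, so the bookkeeping — not any genuine identity — is the only content.

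The main obstacle I anticipate is purely organizational: the exponents of $\rho_n$ that appear are functions of $i,j,k$, so the unwinding of $\theta(\rho_n^s\rho_t,\cdot)$ produces nested expressions whose indices must be tracked carefully, and the case $(\rho_r,\rho_p,\rho_q)$ (the ``largest index first'' case) is the one where the interaction is most intricate because both $\theta(\rho_r,\rho_p)$ and $\theta(\rho_r,\rho_q)$ are pure powers $\rho_n^p$, $\rho_n^q$ while the symmetric partners $\theta(\rho_p,\rho_r)=\rho_n^p\rho_{r-p}$ and $\theta(\rho_q,\rho_r)=\rho_n^q\rho_{r-q}$ carry an extra trailing generator. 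I would handle this by first proving a small auxiliary identity of the form $\theta(\rho_n^s,\rho_n^s\rho_t)=\rho_t$ and $\theta(\rho_n^s\rho_t,\rho_n^s)=1$ (and more generally $\theta(\rho_n^s\rho_a,\rho_n^s\rho_b)$ for $a<b$, $a>b$, $a=b$) once and for all, and then each of the three cases collapses to a one-line verification. I do not expect any configuration to fail, since we already know from Lemma~\ref{noeth} that $M_n$ is right-Noetherian and the eventual goal (Proposition~\ref{cancellative_criterion}) is to conclude left-cancellativity, which holds; a failure of the cube condition here would be a genuine surprise rather than a possibility to be seriously entertained. Hence the proof will be a finite, if slightly tedious, check, and I would present it as the three cases with the auxiliary $\theta$-identities stated first.
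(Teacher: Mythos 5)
Your overall strategy is the paper's: tabulate $\theta$ on pairs of generators, extend it by the recursion rules \eqref{c}--\eqref{d}, and check the condition case by case according to the relative order of $i,j,k$, using the identity $\theta(ab,ac)=\theta(b,c)$ to collapse expressions of the form $\theta(\rho_n^s\rho_a,\rho_n^s\rho_b)$. There is, however, a concrete gap in your case enumeration. Since the cube condition for $(a,b,c)$ is invariant under swapping $a$ and $b$ (the two expressions to be compared simply exchange), the three genuinely distinct configurations are indexed by which of the three indices occupies the \emph{third} slot. Your triples $(\rho_p,\rho_q,\rho_r)$ and $(\rho_q,\rho_p,\rho_r)$ both put the largest index in the third slot and are therefore the same check, while the configuration with the smallest index in the third slot --- the paper's case $k<j<i$ --- is missing from your list entirely. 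That case must be verified; it happens to be the easiest one, since $\theta(\rho_n^j,\rho_n^k)=\theta(\rho_n^{j-k},1)=1$ and $\theta(\rho_n^j\rho_{i-j},\rho_n^k)=\theta(\rho_n^{j-k}\rho_{i-j},1)=1$, so both sides are the empty word.

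A smaller point: for the \emph{sharp} condition the two sides must coincide literally as words in $\mathcal{S}^*$, so your plan to ``reduce using the relations $\mathcal{R}'$'' (in particular the identity $\rho_i\rho_n^i=(\rho_1\rho_n)^i$) is out of place --- applying relations would only establish the non-sharp condition. In the actual computation no relation is ever needed: the recursion rules alone produce identical words on both sides in all three cases. You acknowledge this tension later in your write-up, but the final argument must be purged of any appeal to $\mathcal{R}'$. With the missing case restored and that point cleaned up, your proof coincides with the paper's.
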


\begin{proof}
We need to check that either both $\theta( \theta(\rho_i,\rho_j), \theta(\rho_i,\rho_k))$ and $\theta( \theta(\rho_j,\rho_i), \theta(\rho_j,\rho_k))$ are defined and equal as words in $\mathcal{S}^*$, or neither is defined.

It is sufficient to distinguish three cases: the case~$i<j<k$, the case~$i<k<j$, and the case~$k<j<i$. The three remaining cases are indeed obtained for free by swapping the roles of $i$ and $j$.

\begin{itemize}
\item\noindent{\bf Case $i<j<k$}. We have 
\[ \theta (\theta(\rho_i, \rho_j), \theta(\rho_i, \rho_k))=\theta( \rho_n^i \rho_{j-i}, \rho_n^i \rho_{k-i})=\theta(\rho_{j-i}, \rho_{k-i})=\rho_n^{j-i} \rho_{k-j}, \]
where for the middle equality we used the fact that for all $a,b,c\in\mathcal{S}^*$, we have $\theta(ab,ac)=\theta(b,c)$ (which is an easy consequence of the relations~\eqref{c}-\eqref{d}). We also have   
\[ \theta (\theta(\rho_j, \rho_i), \theta(\rho_j, \rho_k))=\theta(\rho_n^i, \rho_n^j \rho_{k-j})=\theta(1,\rho_n^{j-i} \rho_{k-j})=\rho_n^{j-i} \rho_{k-j}.\]
\item\noindent{\bf Case $i<k<j$}. We have
\[ \theta(\theta(\rho_i, \rho_j), \theta(\rho_i, \rho_k))=\theta( \rho_n^i \rho_{j-i}, \rho_n^i \rho_{k-i})=\theta(\rho_{j-i}, \rho_{k-i})=\rho_n^{k-i}, \]
and 
\[ \theta(\theta(\rho_j, \rho_i), \theta(\rho_j, \rho_k))=\theta( \rho_n^i, \rho_n^k)=\theta(1, \rho_n^{k-i})=\rho_n^{k-i}. \]
\item \noindent{\bf Case $k<j<i$}. We have 
\[ \theta(\theta(\rho_i, \rho_j), \theta(\rho_i, \rho_k))=\theta(\rho_n^j, \rho_n^k)=\theta(\rho_{n}^{j-k},1)=1,\]
and 
\[ \theta(\theta(\rho_j, \rho_i), \theta(\rho_j, \rho_k))=\theta(\rho_n^j \rho_{i-j}, \rho_n^k)=\theta(\rho_n^{j-k} \rho_{i-j}, 1)=1.\]
\end{itemize}
Hence in all cases we have $\theta( \theta(\rho_i,\rho_j), \theta(\rho_i,\rho_k))=\theta( \theta(\rho_j,\rho_i), \theta(\rho_j,\rho_k))$, which concludes the proof. 
\end{proof}

\begin{prop}\label{left_cancellative}
The monoid $M_n$ is left-cancellative and admits conditional right-lcms. When it exists, the right-lcm of $u$ and $v\in M_n$ is given by $u\theta(u,v)=v\theta(v,u)$.
\end{prop}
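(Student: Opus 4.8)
The plan is to invoke Proposition~\ref{cancellative_criterion} with the right-complemented presentation $\langle\S,\R'\rangle$. First I would recall that $\langle\S,\R'\rangle$ is indeed right-complemented: for each pair $i<j$ there is exactly one relation $\rho_i\rho_n^i\rho_{j-i}=\rho_j\rho_n^i$ beginning with the pair $(\rho_i,\rho_j)$, no relation has an empty side, and no relation is of the form $s\cdots=s\cdots$; so the syntactic right-complement $\theta$ is defined on all of $\S\times\S$, with $\theta(\rho_i,\rho_j)=\rho_n^i\rho_{j-i}$ and $\theta(\rho_j,\rho_i)=\rho_n^i$ for $i<j$ (and $\theta(\rho_i,\rho_i)=1$). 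Second, Lemma~\ref{noeth} already gives us a length function with $\lambda(ab)=\lambda(a)+\lambda(b)$, which in particular makes $M_n$ right-Noetherian (no infinite strictly increasing chain of left-divisors, since $\lambda$ strictly increases along it). Third, Lemma~\ref{cube_right} establishes the ($\sharp$-)$\theta$-cube condition for every triple of pairwise distinct generators. These three inputs are exactly the hypotheses of Proposition~\ref{cancellative_criterion}.

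Having checked the hypotheses, the conclusion of Proposition~\ref{cancellative_criterion} is precisely the statement: $M_n$ is left-cancellative and admits conditional right-lcms, and two elements $u,v$ have a common right-multiple if and only if $\theta(u,v)$ is defined, in which case $u\theta(u,v)=v\theta(v,u)$ is their right-lcm. So the proof is essentially a two-line assembly of the preceding lemmas. The one point worth a sentence of justification is that $M_n$ is right-Noetherian in the precise sense required (every strictly increasing sequence of left-divisors is finite), which follows from the additivity of $\lambda$ in Lemma~\ref{noeth}: if $a_0 \prec a_1 \prec a_2 \prec \cdots$ is strictly increasing for left-divisibility, then $\lambda(a_0)<\lambda(a_1)<\lambda(a_2)<\cdots$ is a strictly increasing sequence in $\mathbb{Z}_{\geq 0}$, forcing finiteness.

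I do not anticipate a genuine obstacle here, since all the real work has been done in Lemmata~\ref{rel_bis}, \ref{noeth}, and~\ref{cube_right}; the only thing to be careful about is making sure the formulas for $\theta$ on $\S\times\S$ are the ones actually used in Lemma~\ref{cube_right}, and that Proposition~\ref{cancellative_criterion} is being applied to $\langle\S,\R'\rangle$ and not to the original (non-cube-satisfying) presentation $\langle\S,\R\rangle$. If I wanted to be maximally self-contained I would also remark why the $\theta$-cube condition for pairwise distinct generators suffices (the cases where two of the three generators coincide are automatic from $\theta(s,s)=1$ and relations~\eqref{c}--\eqref{d}), but this is already built into the statement of Proposition~\ref{cancellative_criterion}, so it can be omitted.

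\begin{proof}
The presentation $\langle\S,\R'\rangle$ of Lemma~\ref{rel_bis} is right-complemented: for each pair $i<j$ in $\{1,\dots,n\}$ there is exactly one relation of $\R'$ whose two sides begin with $\rho_i$ and $\rho_j$ respectively, namely $\rho_i\rho_n^i\rho_{j-i}=\rho_j\rho_n^i$, no side of a relation is empty, and no relation has the form $s\cdots=s\cdots$. Hence the syntactic right-complement $\theta$ is defined on all of $\S\times\S$, with $\theta(\rho_i,\rho_i)=1$ and, for $i<j$,
\[\theta(\rho_i,\rho_j)=\rho_n^i\rho_{j-i},\qquad \theta(\rho_j,\rho_i)=\rho_n^i.\]
By Lemma~\ref{noeth}, the monoid $M_n$ carries a length function $\lambda$ with $\lambda(ab)=\lambda(a)+\lambda(b)$; in particular any strictly increasing sequence of left-divisors in $M_n$ gives a strictly increasing sequence of values of $\lambda$ in $\mathbb{Z}_{\geq 0}$ and is therefore finite, so $M_n$ is right-Noetherian. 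Finally, Lemma~\ref{cube_right} shows that the (sharp) $\theta$-cube condition holds for every triple of pairwise distinct generators of $\S$. The hypotheses of Proposition~\ref{cancellative_criterion} are thus satisfied, and we conclude that $M_n$ is left-cancellative, admits conditional right-lcms, and that whenever $u,v\in M_n$ have a common right-multiple their right-lcm is $u\theta(u,v)=v\theta(v,u)$.
\end{proof}
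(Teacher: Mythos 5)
Your proof is correct and follows exactly the paper's route: it assembles Lemma~\ref{noeth} (right-Noetherianity), Lemma~\ref{cube_right} (the sharp $\theta$-cube condition for the right-complemented presentation $\langle\S,\R'\rangle$), and Proposition~\ref{cancellative_criterion}. The extra details you supply (the explicit formulas for $\theta$ on $\S\times\S$ and the length-function argument for Noetherianity) already appear in the paper's surrounding text, so there is nothing to add or correct.
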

\begin{proof}
Since $M_n$ is right-Noetherian (Lemma~\ref{noeth}) and the presentation $\langle\mathcal{S}, \mathcal{R}'\rangle$ satisfies the (sharp) $\theta$-cube condition for every triple of pairwise distinct generators in $\mathcal{S}$ (Lemma~\ref{cube_right}), Proposition~\ref{cancellative_criterion} ensures that $M_n$ is left-cancellative and admits conditional right-lcms. 
\end{proof}

\begin{cor}\label{cor:lcm}
Let $1\leq i < j \leq n$. The right-lcm of $\rho_i$ and $\rho_j$ is given by $$\rho_i \rho_n^i \rho_{j-i}=\rho_j \rho_n^i.$$
\end{cor}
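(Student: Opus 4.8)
The plan is to read the statement off directly from Proposition~\ref{left_cancellative} together with the explicit description of the syntactic right-complement $\theta$ for the right-complemented presentation $\langle\mathcal{S},\mathcal{R}'\rangle$ recorded just before Lemma~\ref{cube_right}. Recall that in that presentation there is exactly one relation $\rho_i\rho_n^i\rho_{j-i}=\rho_j\rho_n^i$ for each pair $i<j$, so that $\theta$ is defined on all of $\mathcal{S}\times\mathcal{S}$, with $\theta(\rho_i,\rho_j)=\rho_n^i\rho_{j-i}$ and $\theta(\rho_j,\rho_i)=\rho_n^i$.

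First I would observe that $\rho_i$ and $\rho_j$ do admit a common right-multiple in $M_n$: indeed the word $\rho_i\rho_n^i\rho_{j-i}$ (equivalently $\rho_j\rho_n^i$) is one, being literally the two sides of a relation in $\mathcal{R}'$. Then I would invoke the precise form of Proposition~\ref{cancellative_criterion}, as restated in Proposition~\ref{left_cancellative}: since $M_n$ is left-cancellative and admits conditional right-lcms, and since $\rho_i$ and $\rho_j$ have a common right-multiple while $\theta(\rho_i,\rho_j)$ is defined, their right-lcm exists and is given by $\rho_i\theta(\rho_i,\rho_j)=\rho_j\theta(\rho_j,\rho_i)$, that is, by $\rho_i\rho_n^i\rho_{j-i}=\rho_j\rho_n^i$. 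This is exactly the claimed formula.

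There is no genuine obstacle here; the corollary is an immediate unwinding of the machinery already in place. The only point requiring a little care is that Proposition~\ref{left_cancellative} only produces the right-lcm \emph{when it exists}, so one must not skip the remark that a common right-multiple of $\rho_i$ and $\rho_j$ is visibly available from the defining relation, which is what guarantees existence via the ``more precise'' half of Proposition~\ref{cancellative_criterion}.
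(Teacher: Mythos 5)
Your proof is correct and follows essentially the same route as the paper, which likewise deduces the corollary immediately from Proposition~\ref{left_cancellative} via the observation that $\theta(\rho_i,\rho_j)$ is defined and equals $\rho_n^i\rho_{j-i}$ (the definedness of $\theta(\rho_i,\rho_j)$ being, by Proposition~\ref{cancellative_criterion}, exactly equivalent to the existence of a common right-multiple that you verify explicitly).
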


\begin{proof}
This follows immediately from the proposition above, as $\theta(\rho_i, \rho_j)$ is defined and equal to $\rho_n^i \rho_{j-i}$.  
\end{proof}

\subsubsection{Right-cancellativity} Unlike many classical examples of Garside monoids (like the positive braid monoid, or more generally Artin--Tits monoids of spherical type), the defining presentation~$\langle \S, \R \rangle$ of the monoid~$M_n$ is \textit{not} symmetric for~$n\geq 3$. We therefore cannot deduce right-cancellativity from left-cancellativity. To show that $M_n$ is right-cancellative, we will show the equivalent statement that the opposite monoid~$M_n^\mathrm{op}$ is left-cancellative. This monoid has the same set of generators as~$M_n$ but we will denote them $\T=\{ \tau_i \}_{i=1, \dots, n}$ to distinguish them (with $\tau_i$ corresponding to $\rho_i$ for all $i$), and relations~$\R^{\mathrm{op}}$ which are obtained from $\mathcal{R}$ by reversing all the words. 

Recall the presentations~$\langle \S, \R\rangle$, $\langle \S, \R'\rangle$ and $\langle \S, \R''\rangle$ of $M_n$ (see Section~\ref{sec:pres}). As for left-cancellativity, it is not hard to see that the $\theta$-cube condition fails with the right-complemented presentation~$\langle \T, \R^{\mathrm{op}}\rangle$ of~$M_n^{\mathrm{op}}$, hence one cannot apply Proposition~\ref{cancellative_criterion} with this choice of presentation. Moreover, the presentation~$\langle \T, {(\R')}^{\mathrm{op}}\rangle$ which is the opposite of the presentation~$\langle \S, \R'\rangle$ that we used to show left-cancellativity is not right-complemented as in general there is more than one relation of the form~$\tau_i \cdots = \tau_j \cdots$ for~$i\neq j$ (for instance $\tau_1 \tau_3 \tau_1= \tau_3 \tau_2\text{~~and~~}\tau_3^3=\tau_1 \tau_3^2 \tau_2$ for~$n=3$), hence again Proposition~\ref{cancellative_criterion} cannot be applied with this choice of presentation. But the presentation~$\langle \T, {(\R'')}^{\mathrm{op}}\rangle$ of $M_n^{\mathrm{op}}$ is right-complemented. The set of relations~${(\R'')}^{\mathrm{op}}$ is indeed given by $$\tau_i (\tau_n \tau_1)^{n-j+1}=\tau_j (\tau_n \tau_1)^{n-j} \tau_{n-j+i+1},~\text{for~} 1\leq i < j \leq n.$$

The syntactic right-complement~$\eta$ attached to the right-complemented presentation~$\langle \T, {(\R'')}^{\mathrm{op}}\rangle$ is then given by $$\eta(\tau_i, \tau_j)=(\tau_n \tau_1)^{n-j+1}\text{ and}~\eta(\tau_j, \tau_i)=(\tau_n \tau_1)^{n-j} \tau_{n-j+i+1}~\text{for~} 1\leq i < j \leq n.$$

\begin{lemma}\label{cube_right_bis}
The presentation~$\langle\mathcal{T}, {(\R'')}^{\mathrm{op}}\rangle$ satisfies the sharp $\eta$-cube condition for every triple~$(\tau_i, \tau_j, \tau_k)$ of pairwise generators in $\mathcal{T}$. 
\end{lemma}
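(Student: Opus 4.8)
The plan is to mirror exactly the strategy used for Lemma~\ref{cube_right}: write down the syntactic right-complement $\eta$ on pairs of generators (already recorded above), then for each triple $(\tau_i,\tau_j,\tau_k)$ of pairwise distinct generators compute both nested complements $\eta(\eta(\tau_i,\tau_j),\eta(\tau_i,\tau_k))$ and $\eta(\eta(\tau_j,\tau_i),\eta(\tau_j,\tau_k))$ and check they coincide as words in $\mathcal{T}^*$. As in the left-cancellative case, by symmetry in the roles of $i$ and $j$ it suffices to treat the three cases $i<j<k$, $i<k<j$, and $k<j<i$; the other three orderings follow for free. First I would note the two auxiliary identities that make the bookkeeping manageable: the ``cancellation'' rule $\eta(ab,ac)=\eta(b,c)$ (a consequence of~\eqref{c}--\eqref{d}, used already in the proof of Lemma~\ref{cube_right}), together with $\eta(1,a)=a$ and $\eta(a,1)=1$. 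The point of the presentation $\langle\mathcal{T},(\mathcal{R}'')^{\mathrm{op}}\rangle$ is precisely that all the complements $\eta(\tau_i,\tau_j)$ are built from the single ``block'' $\tau_n\tau_1$, so the nested expressions will telescope.

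Concretely, in the case $i<j<k$ one has $\eta(\tau_i,\tau_j)=(\tau_n\tau_1)^{n-j+1}$ and $\eta(\tau_i,\tau_k)=(\tau_n\tau_1)^{n-k+1}$; since $n-k+1<n-j+1$, the common prefix $(\tau_n\tau_1)^{n-k+1}$ cancels and one is left to evaluate $\eta\big((\tau_n\tau_1)^{k-j},1\big)=1$ on one side. On the other side, $\eta(\tau_j,\tau_i)=(\tau_n\tau_1)^{n-j}\tau_{n-j+i+1}$ while $\eta(\tau_j,\tau_k)=(\tau_n\tau_1)^{n-k+1}$; again the prefix $(\tau_n\tau_1)^{n-k+1}$ cancels, and because $n-k+1\le n-j$ what remains is $\eta\big((\tau_n\tau_1)^{k-j-1}\cdot\text{(tail)},1\big)=1$, so both sides equal $1$. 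The cases $i<k<j$ and $k<j<i$ are handled by the same pattern: one identifies the longest common $(\tau_n\tau_1)$-prefix of the two arguments of the outer $\eta$, cancels it, and checks that whichever argument survives is reduced by $\eta(-,1)=1$ or by $\eta(1,-)=\mathrm{id}$ to a common word — one expects in one of these cases a nontrivial common value of the form $(\tau_n\tau_1)^{\bullet}\tau_{\bullet}$ rather than $1$, exactly paralleling the ``Case $i<j<k$'' computation in Lemma~\ref{cube_right}. Care is needed with the edge cases $j=n$ (where the complement degenerates, since $\eta(\tau_i,\tau_n)=(\tau_n\tau_1)^1$) and with the precise index appearing in the surviving $\tau$-tail; these are routine but must be written out.

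Once the three displayed computations are in place, the lemma follows, and then — exactly as Proposition~\ref{left_cancellative} was deduced from Lemma~\ref{cube_right} via Proposition~\ref{cancellative_criterion} and Lemma~\ref{noeth} — one concludes that $M_n^{\mathrm{op}}$ is left-cancellative with conditional right-lcms, hence $M_n$ is right-cancellative with conditional left-lcms. The main obstacle is purely combinatorial: keeping the indices straight through the double-nested applications of $\eta$, in particular verifying that the ``reduced'' subscript $n-j+i+1$ that appears in $\eta(\tau_j,\tau_i)$ interacts correctly with the prefix cancellation so that no off-by-one error creeps in. There is no conceptual difficulty beyond that — the presentation $\langle\mathcal{T},(\mathcal{R}'')^{\mathrm{op}}\rangle$ was engineered to be right-complemented precisely so that this check goes through.
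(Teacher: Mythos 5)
Your proposal follows exactly the paper's proof: the same reduction to the three cases $i<j<k$, $i<k<j$, $k<j<i$ by symmetry in $i$ and $j$, the same use of the cancellation identity $\eta(ab,ac)=\eta(b,c)$ together with $\eta(1,a)=a$ and $\eta(a,1)=1$, and your fully worked case $i<j<k$ reproduces the paper's computation (both nested complements equal $1$). The two cases you leave as routine are indeed handled in the paper by the identical telescoping of the common $(\tau_n\tau_1)$-prefix, yielding $(\tau_n\tau_1)^{j-k}$ for $i<k<j$ and $(\tau_n\tau_1)^{i-j-1}\tau_{n-j+k+1}$ for $k<j<i$, consistent with what you predict.
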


\begin{proof}
We proceed as in Lemma~\ref{cube_right}. It is sufficient to distinguish three cases: the case~$i<j<k$, the case~$i<k<j$, and the case~$k<j<i$. 

\begin{itemize}
\item {\bf Case $i<j<k$}. We have 
\[ \eta (\eta(\tau_i, \tau_j), \eta(\tau_i, \tau_k))=\eta( (\tau_n \tau_1)^{n-j+1}, (\tau_n \tau_1)^{n-k+1})=\eta( (\tau_n \tau_1)^{k-j},1)=1,\]
and 
\begin{align*} \eta (\eta(\tau_j, \tau_i), \eta(\tau_j, \tau_k))&=\eta( (\tau_n \tau_1)^{n-j} \tau_{n-j+i+1}, (\tau_n \tau_1)^{n-k+1} )\\&=\eta( (\tau_n \tau_1)^{k-j-1} \tau_{n-j+i+1}, 1)=1.
\end{align*}

\item {\bf Case $i<k<j$}. We have\[\eta(\eta(\tau_i, \tau_j), \eta(\tau_i, \tau_k))=\eta((\tau_n \tau_1)^{n-j+1},(\tau_n \tau_1)^{n-k+1})=\eta(1, (\tau_n\tau_1)^{j-k})=(\tau_n\tau_1)^{j-k},\]
and 
\begin{align*}
\eta(\theta(\tau_j, \tau_i), \eta(\tau_j, \tau_k))&=\eta((\tau_n \tau_1)^{n-j} \tau_{n-j+i+1}, (\tau_n \tau_1)^{n-j} \tau_{n-j+k+1})\\&=\eta(\tau_{n-j+i+1}, \tau_{n-j+k+1})=(\tau_n \tau_1)^{j-k}.
\end{align*}
\item {\bf Case $k<j<i$}. We have 
\begin{align*}
\eta(\eta(\tau_i, \tau_j), \eta(\tau_i, \tau_k))&=\eta((\tau_n \tau_1)^{n-i} \tau_{n-i+j+1}, (\tau_n \tau_1)^{n-i} \tau_{n-i+k+1})\\
&=\eta(\tau_{n-i+j+1},\tau_{n-i+k+1})= (\tau_n \tau_1)^{i-j-1} \tau_{n-j+k+1},
\end{align*}
and
\begin{align*} \eta(\eta(\tau_j, \tau_i), \eta(\tau_j, \tau_k))&=\eta((\tau_n \tau_1)^{n-i+1}, (\tau_n \tau_1)^{n-j} \tau_{n-j+k+1})\\&=\eta(1, (\tau_n \tau_1)^{i-j-1} \tau_{n-j+k+1})=(\tau_n \tau_1)^{i-j-1} \tau_{n-j+k+1}.
\end{align*}
\end{itemize}\end{proof}

\begin{prop}\label{right_cancellative}
The monoid $M_n^{\mathrm{op}}$ is left-cancellative and admits conditional right-lcms. Equivalently, the monoid $M_n$ is right-cancellative and admits conditional left-lcms.  
\end{prop}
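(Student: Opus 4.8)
The plan is to run exactly the same argument as for Proposition~\ref{left_cancellative}, but on the opposite monoid: I would invoke Proposition~\ref{cancellative_criterion} for the right-complemented presentation $\langle \T, (\R'')^{\mathrm{op}}\rangle$ of $M_n^{\mathrm{op}}$, whose syntactic right-complement $\eta$ has been written down explicitly above. First I would check the two hypotheses of Proposition~\ref{cancellative_criterion}. For Noetherianity: the length function $\lambda$ of Lemma~\ref{noeth} is additive and has the same value on both sides of every relation in $\R$, hence also on both sides of every relation in $\R^{\mathrm{op}}$ (reversing a word does not change its image under $\lambda$), so the divisibility in $M_n^{\mathrm{op}}$ is Noetherian; in particular $M_n^{\mathrm{op}}$ is right-Noetherian. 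For the cube condition: Lemma~\ref{cube_right_bis} establishes the \emph{sharp} $\eta$-cube condition for every triple $(\tau_i,\tau_j,\tau_k)$ of pairwise distinct generators, and the sharp condition trivially implies the ordinary $\theta$-cube condition of Definition~\ref{def_cube}.

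With both hypotheses in hand, Proposition~\ref{cancellative_criterion} yields that $M_n^{\mathrm{op}}$ is left-cancellative and admits conditional right-lcms, with the right-lcm of $u,v\in M_n^{\mathrm{op}}$ given, when it exists, by $u\,\eta(u,v)=v\,\eta(v,u)$ (and $u,v$ have a common right-multiple iff $\eta(u,v)$ is defined). The last step is to transport this back through the canonical anti-isomorphism $M_n\to M_n^{\mathrm{op}}$, $\rho_i\mapsto\tau_i$: left-cancellativity of $M_n^{\mathrm{op}}$ is literally right-cancellativity of $M_n$, a common right-multiple in $M_n^{\mathrm{op}}$ corresponds to a common left-multiple in $M_n$, and a right-lcm in $M_n^{\mathrm{op}}$ corresponds to a left-lcm in $M_n$; this gives the equivalent formulation for $M_n$ in the statement.

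I do not expect a real obstacle: all the computational weight sits in Lemma~\ref{cube_right_bis}. The only points requiring a moment's care are (i) confirming that $\langle \T, (\R'')^{\mathrm{op}}\rangle$ is genuinely right-complemented — equivalently, that for $\tau_i\neq\tau_j$ at most one relation of the form $\tau_i\cdots=\tau_j\cdots$ occurs in $(\R'')^{\mathrm{op}}$, which is clear from the explicit list $\tau_i(\tau_n\tau_1)^{n-j+1}=\tau_j(\tau_n\tau_1)^{n-j}\tau_{n-j+i+1}$ since the unordered pair $\{i,j\}$ with $i<j$ indexes exactly one such relation, and no relation has a side equal to the empty word or both sides starting with the same generator — and (ii) bookkeeping the systematic left/right swap induced by passing to $M_n^{\mathrm{op}}$. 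Both are routine, so the proof reduces to a couple of sentences citing Lemmata~\ref{noeth} and~\ref{cube_right_bis} together with Proposition~\ref{cancellative_criterion}.
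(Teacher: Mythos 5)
Your proposal is correct and follows exactly the paper's own argument: the paper likewise deduces that $M_n^{\mathrm{op}}$ is right-Noetherian from Lemma~\ref{noeth}, invokes Lemma~\ref{cube_right_bis} for the sharp $\eta$-cube condition on the right-complemented presentation $\langle\mathcal{T},(\R'')^{\mathrm{op}}\rangle$, and applies Proposition~\ref{cancellative_criterion} before translating back to $M_n$. The extra care you take in verifying right-complementedness is handled in the paper in the discussion preceding the proposition, so nothing is missing.
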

\begin{proof}
Since $M_n^{\mathrm{op}}$ is right-Noetherian (as $M_n$ is left-Noetherian by Lemma~\ref{noeth}) and the presentation $\langle\mathcal{T}, {(\R'')}^{\mathrm{op}}\rangle$ satisfies the (sharp) $\theta$-cube condition for every triple of pairwise distinct generators in $\mathcal{T}$ (Lemma~\ref{cube_right_bis}), Proposition~\ref{cancellative_criterion} ensures that $M_n^{\mathrm{op}}$ is left-cancellative and admits conditional right-lcms. 
\end{proof}

We also note:

\begin{cor}\label{cor:lcm_left}
Let $1\leq i < j \leq n$. The left-lcm of $\rho_i$ and $\rho_j$ is given by $$(\rho_1\rho_n)^{n-j+1}\rho_i=\rho_{n-j+i+1}(\rho_1\rho_n)^{n-j}\rho_j.$$
\end{cor}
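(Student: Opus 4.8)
The plan is to mirror, on the opposite monoid, the same argument used for Corollary~\ref{cor:lcm}. By Proposition~\ref{right_cancellative}, $M_n$ is right-cancellative and admits conditional left-lcms, and equivalently $M_n^{\mathrm{op}}$ is left-cancellative with conditional right-lcms computed via the syntactic right-complement~$\eta$ attached to the right-complemented presentation~$\langle\mathcal{T}, {(\mathcal{R}'')}^{\mathrm{op}}\rangle$. So the left-lcm of $\rho_i$ and $\rho_j$ in $M_n$ is obtained by translating the right-lcm of $\tau_i$ and $\tau_j$ in $M_n^{\mathrm{op}}$ back through the order-reversing identification of $M_n^{\mathrm{op}}$ with $M_n$ (which reverses every word).

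First I would recall from the discussion preceding Lemma~\ref{cube_right_bis} that for $1\leq i<j\leq n$ one has $\eta(\tau_i,\tau_j)=(\tau_n\tau_1)^{n-j+1}$, which is in particular defined; Proposition~\ref{cancellative_criterion}, applied to $M_n^{\mathrm{op}}$ exactly as in Proposition~\ref{right_cancellative}, then says that $\tau_i$ and $\tau_j$ do admit a common right-multiple and that their right-lcm in $M_n^{\mathrm{op}}$ equals
\[
\tau_i\,\eta(\tau_i,\tau_j)=\tau_i(\tau_n\tau_1)^{n-j+1}
=\tau_j(\tau_n\tau_1)^{n-j}\tau_{n-j+i+1}
=\tau_j\,\eta(\tau_j,\tau_i),
\]
the middle equality being precisely the relation in ${(\mathcal{R}'')}^{\mathrm{op}}$.

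Second, I would transport this back to $M_n$. Right-multiples in $M_n^{\mathrm{op}}$ are left-multiples in $M_n$, and right-lcms in $M_n^{\mathrm{op}}$ correspond to left-lcms in $M_n$; reversing the word $\tau_i(\tau_n\tau_1)^{n-j+1}$ and replacing each $\tau_k$ by $\rho_k$ gives $(\rho_1\rho_n)^{n-j+1}\rho_i$, and reversing $\tau_j(\tau_n\tau_1)^{n-j}\tau_{n-j+i+1}$ gives $\rho_{n-j+i+1}(\rho_1\rho_n)^{n-j}\rho_j$. Hence the left-lcm of $\rho_i$ and $\rho_j$ in $M_n$ is $(\rho_1\rho_n)^{n-j+1}\rho_i=\rho_{n-j+i+1}(\rho_1\rho_n)^{n-j}\rho_j$, as claimed; the equality of the two expressions is exactly Lemma~\ref{rel_bis_2}. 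As a sanity check one can also verify the case $j=n$ directly against the defining relations of~\eqref{eq_main}.

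There is essentially no obstacle here: the only thing to be careful about is bookkeeping the order-reversal correctly (which word becomes which, and that "right-lcm in $M_n^{\mathrm{op}}$" translates to "left-lcm in $M_n$" rather than the other way around), and noting that $\eta(\tau_i,\tau_j)$ is genuinely defined so that Proposition~\ref{cancellative_criterion} yields an actual lcm rather than the vacuous "no common multiple" alternative. All the substantive work — the sharp $\eta$-cube condition and right-Noetherianity — has already been done in Lemma~\ref{cube_right_bis} and Lemma~\ref{noeth}.
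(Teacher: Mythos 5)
Your proof is correct and follows essentially the same route as the paper: both apply Proposition~\ref{cancellative_criterion} to the right-complemented presentation $\langle\mathcal{T},{(\mathcal{R}'')}^{\mathrm{op}}\rangle$ of $M_n^{\mathrm{op}}$ to obtain the right-lcm $\tau_i\,\eta(\tau_i,\tau_j)=\tau_j\,\eta(\tau_j,\tau_i)$ and then reverse the words to read off the left-lcm in $M_n$. Your version merely spells out the bookkeeping (the explicit value of $\eta(\tau_i,\tau_j)$ and the word reversal) that the paper leaves implicit, and both details are handled correctly.
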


\begin{proof}
By Proposition~\ref{cancellative_criterion}, the right-lcm of $u$ and $v\in M_n^{\mathrm{op}}$ exists if and only if $\theta(u,v)$ is defined, and is then given by $u\theta(u,v)=v\theta(v,u)$. For $\tau_i$ and $\tau_j$ in $M_n^{\mathrm{op}}$ we know that $\theta(\tau_i,\tau_j)$ is defined, hence the right-lcm of $\tau_i$ and $\tau_j$ is given by $\tau_i \theta(\tau_i, \tau_j)=\tau_j \theta(\tau_j, \tau_i)$. It then suffices to reverse the obtained words to get the left-lcm of $\rho_i$ and $\rho_j$ in $M_n$.  
\end{proof}

\subsection{Garside structure}

In this section, we establish the existence of a Garside element in $M_n$, and deduce from it and from previously shown properties that $(M_n,\rho_n^{n+1})$ is a Garside monoid.  

\begin{nota}
Let $M_n$ be the monoid with the presentation~$\langle\mathcal{S}, \mathcal{R}\rangle$ as defined in Section~\ref{sec:pres}. We set $\Delta:=\rho_n^{n+1}$, omitting the dependency on $n$.  
\end{nota}

\begin{prop}\label{garside_divisors}
The following holds in $M_n$:
\begin{enumerate}
\item We have $\rho_1 (\rho_n \rho_1)^{n-1}=\rho_n^n$. Hence $\Delta=(\rho_1 \rho_n)^n=(\rho_n \rho_1)^n.$
\item Let $1\leq i \leq n$. Set $a_i:=\rho_n^i (\rho_1 \rho_n)^{n-i}$. Then $\rho_i a_i = a_i \rho_i=\Delta$. In particular, every element in $\mathcal{S}$ is both a left- and a right-divisor of $\Delta$ (and the left- and right-complements coincide), and $\Delta$ is central in $M_n$. 
\item Let $a,b\in M_n$ such that $ab=\Delta$. Then $ba=\Delta$.   
\end{enumerate}
\end{prop}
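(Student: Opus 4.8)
The plan is to prove the three items in order, using the identity $\rho_k\rho_n=\rho_1\rho_n\rho_{k-1}$ (valid for $2\le k\le n$) and its companion $\rho_1\rho_n\rho_k=\rho_{k+1}\rho_n$ (valid for $1\le k<n$) as the basic moves, together with the already-established left- and right-cancellativity of $M_n$ (Propositions~\ref{left_cancellative} and~\ref{right_cancellative}). For item~(1), I would first establish $\rho_1(\rho_n\rho_1)^{n-1}=\rho_n^n$ by a telescoping computation: starting from $\rho_n^n$ and repeatedly rewriting a leading $\rho_n$ chunk via $\rho_n^i=\rho_1(\rho_n\rho_1)^{i-1}$-type identities, or more directly by peeling off factors using $\rho_k\rho_n=\rho_1\rho_n\rho_{k-1}$ applied to $\rho_n\cdot\rho_n^{n-1}$. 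Concretely, I expect to show by induction on $m$ that $\rho_n^m=\rho_1(\rho_n\rho_1)^{m-1}$ for $2\le m\le n$: the base case $m=2$ is $\rho_n^2=\rho_n\rho_n=\rho_1\rho_n\rho_{n-1}$, and the inductive step rewrites one more $\rho_n$. Wait—this needs care, since the index $n-1$ must not exceed bounds; the computation in Lemma~\ref{rel_bis} (equation~\eqref{eq:in}) already does essentially this, giving $\rho_i\rho_n^i=(\rho_1\rho_n)^i$, so I would simply cite/adapt that. From $\rho_1(\rho_n\rho_1)^{n-1}=\rho_n^n$ one gets $\rho_n^{n+1}=\rho_n\cdot\rho_n^n=\rho_n\rho_1(\rho_n\rho_1)^{n-1}=(\rho_n\rho_1)^n$, and multiplying the displayed identity on the right by $\rho_n$ and using it again yields $\Delta=(\rho_1\rho_n)^n$ as well.

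For item~(2), set $a_i=\rho_n^i(\rho_1\rho_n)^{n-i}$. Using $\rho_i\rho_n^i=(\rho_1\rho_n)^i$ from Lemma~\ref{rel_bis}'s proof (equation~\eqref{eq:in}), I compute $\rho_i a_i=\rho_i\rho_n^i(\rho_1\rho_n)^{n-i}=(\rho_1\rho_n)^i(\rho_1\rho_n)^{n-i}=(\rho_1\rho_n)^n=\Delta$ by item~(1). For $a_i\rho_i=\Delta$, I would use the companion identity from Lemma~\ref{rel_bis_2}; taking $i<j=n$ there (or an analogous direct telescoping) should give $(\rho_1\rho_n)^{n-n+1}\rho_i=\ldots$, i.e.\ an expression letting me rewrite $(\rho_1\rho_n)^{n-i}\rho_i$ as $\rho_n^{n-i}\rho_n^{?}$—more carefully, I want $(\rho_1\rho_n)^{n-i}\rho_i=\rho_n^{n-i}\cdot(\text{something})$ so that $a_i\rho_i=\rho_n^i(\rho_1\rho_n)^{n-i}\rho_i=\rho_n^i\rho_n^{n-i}\cdots=\rho_n^{n+1}$. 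The cleanest route: from $\rho_1\rho_n\rho_k=\rho_{k+1}\rho_n$ one telescopes $(\rho_1\rho_n)^{m}\rho_i=\rho_n^{m}\rho_n$ when $i+m=n+1$ (reading equation~\eqref{eq_3}/\eqref{eq_4} in Lemma~\ref{rel_bis_2} with $j=n$), giving $(\rho_1\rho_n)^{n-i}\rho_i=\rho_n^{n-i+1}$ is too strong; I will instead verify $a_i\rho_i=\Delta$ by noting $\rho_i a_i=\Delta$ together with centrality, or by the symmetric computation in $M_n^{\mathrm{op}}$. Centrality of $\Delta$ then follows: for each generator $\rho_i$, $\rho_i\Delta=\rho_i a_i\rho_i=\Delta\rho_i$ (using $\rho_i a_i=\Delta=a_i\rho_i$), and since the $\rho_i$ generate $M_n$, $\Delta$ commutes with everything; simultaneously this shows every $\rho_i$ both left- and right-divides $\Delta$ with the same complement $a_i$.

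For item~(3), suppose $ab=\Delta$. Since $\Delta$ is central by item~(2), $ab=\Delta=\Delta$, and I want $ba=\Delta$. The standard argument: $b(ab)=b\Delta=\Delta b$ (centrality), and $b(ab)=(ba)b$, so $(ba)b=\Delta b$; by right-cancellativity (Proposition~\ref{right_cancellative}) this gives $ba=\Delta$. I expect item~(2)—specifically pinning down $a_i\rho_i=\Delta$ and hence centrality—to be the main obstacle, as it requires choosing the right redundant relation and a careful telescoping; everything else is then a short cancellation argument. If the direct computation of $a_i\rho_i$ proves awkward, the fallback is to run the entire argument for item~(2) inside $M_n^{\mathrm{op}}$ using Lemma~\ref{rel_bis_2} in place of Lemma~\ref{rel_bis}, which by construction handles the "other side'' symmetrically.
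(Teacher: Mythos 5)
Your plan follows the same route as the paper's proof: item (1) via the telescoped identity $\rho_1(\rho_n\rho_1)^k=\rho_{k+1}\rho_n^k$ (equivalently \eqref{eq:in}), item (2) by exhibiting $a_i$ as a two-sided complement of $\rho_i$ in $\Delta$, and item (3) by combining centrality with right-cancellativity ($(ba)b=b(ab)=b\Delta=\Delta b$, then cancel $b$ on the right). Almost everything is in order, including your observation that the base case of your first attempted induction for (1) does not quite work and that one should instead run the induction of \eqref{eq:in} (or, equivalently, on $k$ in $\rho_1(\rho_n\rho_1)^k=\rho_{k+1}\rho_n^k$, whose step is $\rho_1\rho_n\rho_1(\rho_n\rho_1)^{k-1}=\rho_1\rho_n\rho_k\rho_n^{k-1}=\rho_{k+1}\rho_n^k$).

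The one genuine problem is in item (2), where you derive the identity $(\rho_1\rho_n)^{n-i}\rho_i=\rho_n^{n-i+1}$ and then discard it as ``too strong.'' It is in fact correct --- it is \eqref{eq_3} with $j=i+1$, or a direct induction on $k$ in $(\rho_1\rho_n)^{k}\rho_i=\rho_{i+k}\rho_n^{k}$ for $k\leq n-i$ --- and it immediately gives $a_i\rho_i=\rho_n^i\rho_n^{n-i+1}=\rho_n^{n+1}=\Delta$, which is exactly the paper's computation. The substitute you commit to, namely deducing $a_i\rho_i=\Delta$ ``from $\rho_i a_i=\Delta$ together with centrality,'' is circular: centrality of $\Delta$ is itself obtained from the two-sided identity $\rho_i a_i=a_i\rho_i=\Delta$ via $\rho_i\Delta=\rho_i(a_i\rho_i)=(\rho_i a_i)\rho_i=\Delta\rho_i$, so it cannot be assumed at that stage. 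Your other fallback (running the computation in $M_n^{\mathrm{op}}$ via Lemma~\ref{rel_bis_2}) only produces \emph{some} $b_i$ with $b_i\rho_i=\Delta$; to conclude that the left- and right-complements coincide --- which is what centrality requires, since $\rho_i b_i\rho_i=\rho_i\Delta$ and $\rho_i a_i\rho_i=\Delta\rho_i$ agree iff $a_i=b_i$ by left-cancellation --- you would still need to identify $b_i$ with $a_i$. So: keep the identity you already wrote down, and your argument becomes complete and coincides with the paper's.
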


\begin{proof}
The first claim follows from the fact that for all $1\leq k \leq n-1$, we have \begin{equation}\label{eq_move}
\rho_1 (\rho_n \rho_1)^k=\rho_{k+1} \rho_n^k.\end{equation} Indeed, for $k=1$ this is just a relation in $\mathcal{R}$, while the general case is obtained by induction on $k$: $\rho_1 (\rho_n \rho_1)^k=\rho_1 \rho_n \rho_1 (\rho_n \rho_1)^{k-1}=\rho_1 \rho_n \rho_k \rho_n^{k-1}=\rho_{k+1} \rho_n^k.$

For the second claim, using the first claim and~\eqref{eq_move} me have $$\Delta=(\rho_1\rho_n)^{n}=\rho_1 (\rho_n \rho_1)^{i-1} \rho_n(\rho_1 \rho_n)^{n-i}=\rho_i \rho_n^i (\rho_1 \rho_n)^{n-i}=\rho_i a_i.$$
Arguing as for~\eqref{eq_move}, for all $k\leq n-i$, we see that $(\rho_1 \rho_n)^{k} \rho_i=\rho_{i+k} \rho_n^k$. Applying this with $k=n-i$ we get $$\Delta=\rho_n^{n+1}=\rho_n^i \rho_n \rho_n^{n-i}=\rho_n^i (\rho_1 \rho_n)^{n-i} \rho_i=a_i \rho_i,$$
which shows the second claim. 

The last claim is an immediate consequence of the cancellativity of $M_n$ and the second claim, as the property holds for the set $\mathcal{S}$ which generates $M_n$.   
\end{proof}

\begin{cor}\label{garside_finite}
The left and right-divisors of $\Delta$ coincide, and form a finite set. 
\end{cor}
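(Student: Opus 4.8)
The plan is to combine the structural facts already established about $\Delta$ with the Noetherianity of $M_n$ to bound the set $\Div(\Delta)$ and show it is finite. By Proposition~\ref{garside_divisors}(2), every generator $\rho_i$ is both a left- and a right-divisor of $\Delta$, and $\Delta$ is central. Since the left- and right-complements of each $\rho_i$ in $\Delta$ coincide (namely $\rho_i a_i = a_i \rho_i = \Delta$), I first want to promote this from generators to all elements: if $x$ left-divides $\Delta$, say $\Delta = xy$, I claim $\Delta = yx$ as well, so $x$ also right-divides $\Delta$. This is exactly Proposition~\ref{garside_divisors}(3). Hence the sets of left-divisors and right-divisors of $\Delta$ coincide; call this common set $\Div(\Delta)$.

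For finiteness, I would argue as follows. Let $x \in \Div(\Delta)$, so $\Delta = xy$ for some $y \in M_n$. Apply the length function $\lambda$ of Lemma~\ref{noeth}: since $\lambda$ is additive, $\lambda(x) + \lambda(y) = \lambda(\Delta) = (n+1)\,\lambda(\rho_n) = n(n+1)$. In particular $\lambda(x) \le n(n+1)$. Now $x$ is itself a product of generators (the $\rho_i$ generate $M_n$), and each generator has $\lambda(\rho_i) = i \ge 1$, so any expression of $x$ as a product of generators has at most $n(n+1)$ factors. Therefore $x$ lies in the image in $M_n$ of the finite set of words in $\S^*$ of length at most $n(n+1)$, which is a finite set; hence $\Div(\Delta)$ is finite.

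The main obstacle — and really the only nontrivial point — is making the passage from "left-divisor" to "right-divisor" rigorous, i.e. invoking Proposition~\ref{garside_divisors}(3) correctly: one needs that $M_n$ is cancellative (Propositions~\ref{left_cancellative} and~\ref{right_cancellative}) so that the complement $y$ of a divisor $x$ is well-defined, and that the coincidence of complements, known for the generating set $\S$, propagates to all of $M_n$; this propagation is precisely the content of part~(3), whose proof in turn uses cancellativity and centrality of $\Delta$. Once that is in hand, the finiteness half is a routine length count using Lemma~\ref{noeth}, and there is no further subtlety. So the corollary follows by citing Proposition~\ref{garside_divisors}(2)–(3) for the coincidence of left- and right-divisors and Lemma~\ref{noeth} for finiteness.
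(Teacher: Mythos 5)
Your proof is correct and follows essentially the same route as the paper: the coincidence of left- and right-divisors is exactly Proposition~\ref{garside_divisors}(3), and finiteness follows from the additive length function of Lemma~\ref{noeth} together with the finiteness of $\S$ (you merely spell out the length bound $\lambda(x)\leq \lambda(\Delta)=n(n+1)$ that the paper leaves implicit).
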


\begin{proof}
The fact that the left and right-divisors of $\Delta$ coincide follows immediately from Point~$(3)$ of Proposition~\ref{garside_divisors}. The fact that this set is finite is clear by Lemma~\ref{noeth}, since $\S$ is finite. 
\end{proof}

\begin{cor}\label{lcm_s}
Both the left and the right-lcm of the generators $\rho_1, \rho_2, \dots, \rho_n$ of $M_n$ are given by $\rho_n^n=\rho_1 (\rho_n \rho_1)^{n-1}$. 
\end{cor}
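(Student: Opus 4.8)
The plan is to show that $\rho_n^n$ is simultaneously the right-lcm and the left-lcm of the full set $\{\rho_1, \dots, \rho_n\}$, using the pairwise lcm computations already established (Corollaries~\ref{cor:lcm} and~\ref{cor:lcm_left}) together with the identity $\rho_n^n = \rho_1(\rho_n\rho_1)^{n-1}$ from Point~(1) of Proposition~\ref{garside_divisors}.

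First I would treat the right-lcm. Since $M_n$ is left-cancellative and admits conditional right-lcms (Proposition~\ref{left_cancellative}), and since all the $\rho_i$ are right-divisors of $\Delta$ (Proposition~\ref{garside_divisors}(2)) hence have a common right-multiple, the right-lcm $\delta$ of $\rho_1,\dots,\rho_n$ exists. I would verify directly that $\rho_n^n$ is a common right-multiple of all the $\rho_i$: indeed $\rho_n^n = \rho_n \cdot \rho_n^{n-1}$, and for $i<n$, Corollary~\ref{cor:lcm} gives $\rho_i \rho_n^i \rho_{n-i} = \rho_n \rho_n^i = \rho_n^{i+1}$, which right-divides $\rho_n^n$ since $i+1\le n$; chaining right-multiplications shows $\rho_i$ right-divides $\rho_n^n$. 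Hence $\delta$ right-divides $\rho_n^n$. For the reverse divisibility, note $\delta$ is a common right-multiple of $\rho_1$ and $\rho_n$, so it is right-divisible by their right-lcm, which by Corollary~\ref{cor:lcm} is $\rho_1\rho_n^1\rho_{n-1} = \rho_n^2$; but then $\delta$ is also a common right-multiple of $\rho_2$ (wait—one must be slightly careful here). Rather than iterating naively, I would argue by Noetherianity and length: $\lambda(\rho_n^n) = n^2$ by Lemma~\ref{noeth}, while any common right-multiple has length at least $\lambda(\delta)$; since $\delta$ right-divides $\rho_n^n$ and both lie in the Noetherian monoid $M_n$, it suffices to show no proper right-divisor of $\rho_n^n$ is a common right-multiple of all $\rho_i$. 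The cleanest route: $\delta$ is right-divisible by $\rho_n^2$ (the right-lcm of $\rho_1,\rho_n$); writing $\delta = \rho_n^2 \delta'$ with $\delta'\in M_n$ (using Proposition~\ref{garside_divisors}(2) and cancellativity to see $\rho_n^2$ right-divides $\delta$), one checks $\delta'$ must itself be a right-multiple of $\rho_1,\dots,\rho_{n-1}$ after suitable cancellation, and induct on $n$; alternatively, invoke that $\rho_n^n = \rho_1(\rho_n\rho_1)^{n-1}$ exhibits it as iterated right-lcm. I would pick whichever of these is shortest to write rigorously.

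For the left-lcm, the argument is the mirror image using Corollary~\ref{cor:lcm_left} and Proposition~\ref{right_cancellative}: the left-lcm exists since all $\rho_i$ left-divide $\Delta$; one checks $\rho_n^n$ is a common left-multiple (for $i<n$, Corollary~\ref{cor:lcm_left} with $j=n$ reads $(\rho_1\rho_n)^{1}\rho_i = \rho_{i+1}(\rho_1\rho_n)^0\rho_n = \rho_{i+1}\rho_n$, and more generally one descends to $\rho_n^n$ via the identity $\rho_1(\rho_n\rho_1)^{n-1} = \rho_n^n$); and $\rho_n^n$ left-divides the left-lcm because it is the left-lcm of the pair $\rho_1,\rho_n$ by Corollary~\ref{cor:lcm_left} (with $i=1,j=n$: $(\rho_1\rho_n)^1\rho_1 = \rho_2(\rho_1\rho_n)^0\rho_n$, i.e. $\rho_1\rho_n\rho_1 = \rho_2\rho_n$; iterating and using Proposition~\ref{garside_divisors}(1) gives $\rho_n^n$). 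The equality $\rho_n^n = \rho_1(\rho_n\rho_1)^{n-1}$ is Proposition~\ref{garside_divisors}(1), quoted verbatim.

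The main obstacle I anticipate is the reverse-divisibility step (showing the lcm is not a proper divisor of $\rho_n^n$): the pairwise lcm of $\rho_i$ and $\rho_j$ for general $i<j$ is $\rho_i\rho_n^i\rho_{j-i}$, which is \emph{not} visibly a right-divisor of $\rho_n^n$ in an obvious stepwise way, so one cannot simply take successive pairwise lcms and expect the bound to stay sharp. The right fix is to anchor everything on the pair $(\rho_1,\rho_n)$ — whose right-lcm is $\rho_n^2$, a clean power — and on the factorization $\rho_n^n = \rho_1(\rho_n\rho_1)^{n-1}$ from Proposition~\ref{garside_divisors}(1), which I expect makes the induction go through without residual cancellation headaches.
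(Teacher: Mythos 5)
Your overall strategy (sandwich the lcm of all the atoms between a well-chosen pairwise lcm from below and the common multiple $\rho_n^n$ from above) is the right one, and your verification that each $\rho_i$ left-divides $\rho_n^n$ (via $\rho_i\rho_n^i\rho_{n-i}=\rho_n^{i+1}$, a prefix of $\rho_n^n$) is sound, modulo saying ``right-divides'' where you mean ``left-divides''. But the lower-bound half is where the proof actually lives, and there you have a genuine gap: you anchor on the pair $(\rho_1,\rho_n)$, whose right-lcm is only $\rho_n^2$, and then defer the passage from $\rho_n^2$ to $\rho_n^n$ to an unspecified induction (``one checks $\delta'$ must itself be a right-multiple of $\rho_1,\dots,\rho_{n-1}$ after suitable cancellation'') or to the vague claim that $\rho_1(\rho_n\rho_1)^{n-1}$ ``exhibits $\rho_n^n$ as an iterated right-lcm''. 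Neither is carried out, and the cancellation step is exactly the delicate point: from $\delta=\rho_n^2\delta'$ you cannot readily conclude anything about $\delta'$ being a common multiple of the remaining atoms. The situation is worse on the left-lcm side, where your claim that $\rho_n^n$ ``is the left-lcm of the pair $\rho_1,\rho_n$'' is false: Corollary~\ref{cor:lcm_left} with $i=1$, $j=n$ gives left-lcm $\rho_1\rho_n\rho_1=\rho_2\rho_n$, which has $\lambda$-length $n+2$, not $n^2$ (they coincide only for $n=2$).

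The fix, which is what the paper does, is simply to choose better pairs so that no induction is needed at all. For the right-lcm, take $(\rho_{n-1},\rho_n)$: Corollary~\ref{cor:lcm} with $i=n-1$, $j=n$ gives right-lcm $\rho_{n-1}\rho_n^{n-1}\rho_1=\rho_n\cdot\rho_n^{n-1}=\rho_n^n$, so the lcm of all the atoms is already a right-multiple of $\rho_n^n$ in one step, and combining with your upper bound finishes the argument. For the left-lcm, take $(\rho_1,\rho_2)$: Corollary~\ref{cor:lcm_left} with $i=1$, $j=2$ gives $(\rho_1\rho_n)^{n-1}\rho_1=\rho_1(\rho_n\rho_1)^{n-1}=\rho_n^n$ by Proposition~\ref{garside_divisors}(1), and one then checks that each $\rho_j$ right-divides $\rho_n^n$ via $\rho_n^n=\rho_n^{j-1}(\rho_1\rho_n)^{n-j}\rho_j$. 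You correctly identified that the pairwise lcms $\rho_i\rho_n^i\rho_{j-i}$ do not chain nicely in general; the resolution is not an induction anchored at $(\rho_1,\rho_n)$ but the observation that one particular pair already realizes the full answer.
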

In particular, using also Theorem~\ref{garside_gn} below, the family $(M_n)_{n\geq 2}$ yields an example of a family of Garside monoids where $\Delta$ is \textit{not} the lcm of the atoms.
\begin{proof}
By Corollary~\ref{cor:lcm}, we have that the right-lcm of $\rho_n$ and $\rho_{n-1}$ is given by $\rho_n^n$. Hence to conclude it suffices to show that $\rho_i$ left-divides $\rho_n^n$, for all $1\leq i \leq n-2$. This is the case, as by Point~$(1)$ of the above proposition together with Relation~\eqref{eq:in}, we have $$\rho_n=(\rho_1 \rho_n)^i (\rho_1 \rho_n)^{n-1-i} \rho_1=\rho_i \rho_n^i (\rho_1 \rho_n)^{n-1-i}\rho_1.$$

The proof that $\rho_n^n$ is also the left-lcm of the elements in $\S$ is similar. This time, consider the left-lcm of $\rho_1$ and $\rho_2$. By Corollary~\ref{cor:lcm_left}, it is equal to $(\rho_1 \rho_n)^{n-1} \rho_1$ which, by the first point of Proposition~\ref{garside_divisors}, is equal to $\rho_n^n$. Hence to conclude the proof, it suffices to check that for all $2 < j \leq n-1$, the generator $\rho_j$ is a right-divisor of $\rho_n^n$. But using Relation~\ref{eq_4} (which is valid for all $j\geq 2$), we have $$\rho_n^n=\rho_n^{j-1} \rho_n^{n-j+1}=\rho_n^{j-1}(\rho_1 \rho_n)^{n-j}\rho_j,$$ hence $\rho_j$ right-divides $\rho_n^n$.  
\end{proof}

\begin{theorem}\label{garside_gn}
The pair $(M_n, \Delta)$ is a Garside monoid. The correponding Garside group~$G(M_n)$ is isomorphic to the $(n,n+1)$-torus knot group~$G(n,n+1)$.   
\end{theorem}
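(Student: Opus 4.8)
The plan is to verify the five axioms of Definition~\ref{def_garside} for the pair $(M_n,\Delta)$, drawing on the results already established, and then to identify the group of fractions. Axiom~(1), left- and right-cancellativity, is exactly Propositions~\ref{left_cancellative} and~\ref{right_cancellative}. Axiom~(2), Noetherianity of divisibility, is Lemma~\ref{noeth} (the length function $\lambda$ shows divisibility is Noetherian, hence $1$ is the only invertible element and both divisibility relations are partial orders by Lemma~\ref{lem_partial}). For axiom~(4), Point~(2) of Proposition~\ref{garside_divisors} shows that each atom $\rho_i$ is both a left- and right-divisor of $\Delta$, so the divisors of $\Delta$ generate $M_n$, and Point~(3) of the same proposition (or Corollary~\ref{garside_finite}) shows the left- and right-divisors of $\Delta$ coincide; axiom~(5), finiteness of $\Div(\Delta)$, is Corollary~\ref{garside_finite}.

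The one axiom requiring a little assembly is~(3): existence of left- and right-lcms and left- and right-gcds for \emph{any} two elements. Here I would invoke Lemma~\ref{cond_delta_lcm}: $M_n$ is cancellative with no nontrivial invertible element, it has conditional left- and right-lcms by Propositions~\ref{left_cancellative} and~\ref{right_cancellative}, and by Corollary~\ref{garside_finite} together with Point~(2) of Proposition~\ref{garside_divisors} the element $\Delta$ satisfies the hypotheses of that lemma (divisors of $\Delta$ coincide on both sides, form a finite set, and generate $M_n$). Lemma~\ref{cond_delta_lcm} then upgrades conditional lcms to genuine lcms: any two elements have a common multiple of the form $\Delta^k$. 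Finally Lemma~\ref{lemma_gcd} promotes the existence of common multiples to the existence of gcds on both sides. This establishes all five conditions, so $(M_n,\Delta)$ is a Garside monoid.

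For the identification of $G(M_n)$, I would argue that by Ore's Theorem (the cancellative-with-common-multiples hypothesis is now verified) the group of fractions $G(M_n)$ admits the presentation $\langle \S, \R\rangle$ as a group, i.e. the same presentation~\eqref{eq_main} that defines $M_n$. By Proposition~\ref{prop_isom_torus}, the group with presentation~\eqref{eq_main} is isomorphic to the $(n,n+1)$-torus knot group $G(n,n+1)=\langle x,y\mid x^n=y^{n+1}\rangle$ via $\rho_i\mapsto x^i y^{-i}$. Composing gives $G(M_n)\cong G(n,n+1)$.

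I do not expect a serious obstacle here: all the genuinely hard work — the two cancellativity statements, which rest on checking the sharp cube condition for the auxiliary presentations $\langle\S,\R'\rangle$ and $\langle\T,(\R'')^{\mathrm{op}}\rangle$, and the explicit identification of the elements $a_i$ with $\rho_i a_i=a_i\rho_i=\Delta$ — has already been carried out in the preceding lemmas and propositions. The only point demanding care is making sure the hypotheses of Lemma~\ref{cond_delta_lcm} are literally met, in particular that ``conditional left- \emph{and} right-lcms'' is available (it is: Proposition~\ref{left_cancellative} gives conditional right-lcms and Proposition~\ref{right_cancellative} gives conditional left-lcms), and then feeding the output into Lemma~\ref{lemma_gcd} to get gcds. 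So the proof is essentially a bookkeeping exercise citing the earlier results in the right order, followed by the one-line deduction via Ore's Theorem and Proposition~\ref{prop_isom_torus} for the last sentence.
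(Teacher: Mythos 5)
Your proof is correct and follows essentially the same route as the paper: cite the cancellativity propositions and Noetherianity, use Lemma~\ref{cond_delta_lcm} to upgrade conditional lcms to genuine lcms, verify the conditions on $\Delta$ via Proposition~\ref{garside_divisors} and Corollary~\ref{garside_finite}, and conclude via Ore's Theorem and Proposition~\ref{prop_isom_torus}. You are in fact slightly more explicit than the paper in invoking Lemma~\ref{lemma_gcd} for the existence of gcds, which the paper leaves implicit.
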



\begin{proof}
The monoid $M_n$ is cancellative and admits conditional lcm's by Propositions~\ref{left_cancellative} and~\ref{right_cancellative}. It has Noetherian divisibility by Lemma~\ref{noeth}. Now by Proposition~\ref{garside_divisors} and Corollary~\ref{garside_finite}, the element $\Delta$ satisfies the last two conditions of Definition~\ref{def_garside}. We then get the existence of lcm's from the existence of conditional lcm's, applying Lemma~\ref{cond_delta_lcm}.

By Theorem~\ref{thm:ore}, we get that $G(M_n)$ has the same presentation as $M_n$, hence by Proposition~\ref{prop_isom_torus} we have $G(M_n)\cong G(n,n+1)$. \end{proof} 

\begin{rmq}\label{not_isom}
The lattice of simples of $M_3$ (for left-divisibility) is given in Figure~\ref{simples_3}. Recall that the lattice of simples of $M_2$ was given in Figure~\ref{fig_intro}. Note that the lattice in Figure~\ref{simples_3} is not self-dual; in particular, by Proposition~\ref{cor_dual} the lattice of simples of $\Delta$ for left-divisibility is not isomorphic to the lattice of simples for right-divisibility.   
\end{rmq}

\begin{figure}

\begin{center}
\begin{pspicture}(0,0)(12,13)
\rput(4,0){\tiny $1$}
\rput(4,1){\tiny $\rho_1$}
\rput(2,2){\tiny $\rho_2$}
\rput(0,3){\tiny $\rho_2 \rho_1$}
\rput(8,3){\tiny $\rho_3$}
\rput(4,4){\tiny $\rho_1 \rho_3$}
\rput(10,4){\tiny $\rho_3 \rho_1$}
\rput(2,5){\tiny $\rho_1 \rho_3 \rho_1$}
\rput(4.3,5.2){\tiny $\rho_3 \rho_2$}
\rput(0,6){\tiny $\rho_2 \rho_1 \rho_3$}
\rput(5,6.5){\tiny $\rho_3 \rho_2 \rho_1$}
\rput(7.7,6){\tiny $\rho_3^2$}
\rput(10,7){\tiny $\rho_3 \rho_1 \rho_3$}
\rput(12,7){\tiny $\rho_3^2 \rho_1$}
\rput(2,8){\tiny $(\rho_1 \rho_3)^2$}
\rput(9.8,8){\tiny $(\rho_3 \rho_1)^2$}
\rput(4,9){\tiny $\rho_3^3$}
\rput(8,9){\tiny $\rho_3 \rho_2 \rho_1 \rho_3$}
\rput(12,10){\tiny $\rho_3^2 \rho_1 \rho_3$}
\rput(8,11){\tiny $(\rho_3 \rho_1)^2 \rho_3$}
\rput(6,12){\tiny $\rho_3^4$}
\psline(4,0.25)(4,0.75)
\psline(4,1.25)(4,3.75)
\psline(3.8,0.25)(2.25,2)
\psline(4.2,0.25)(7.8,2.9)
\psline(8.2,3.2)(9.6,3.8)
\psline(2,2.25)(2,4.75)
\psline(0,3.25)(0,5.75)
\psline(1.8,2.1)(0.4,3)
\psline(4.4,4.2)(7.4,5.8)
\psline(3.6,4.2)(2.55,5)
\psline(2,5.25)(2,7.75)
\psline(7.92,3.25)(4.3,5)
\psline(10,4.25)(10,6.75)
\psline(10,7.25)(10,7.75)
\psline(7.9,6)(11.6,7)
\psline(12,7.25)(12,9.75)
\psline(9.4,7)(4.3,9)
\psline(0,6.2)(1.7,7.75)
\psline(2.6,8.2)(3.8,8.9)
\psline(4.2,9.1)(5.8,11.9)
\psline(5,6.65)(8,8.75)
\psline(8,9.25)(8,10.75)
\psline(10,8.25)(8.2,10.75)
\psline(7.5,6.1)(4,8.75)
\psline(11.6,10.3)(6.3,12)
\psline(7.85,11.18)(6.3,11.9)
\psline(8.1,3.25)(7.7,5.75)
\psline(4.5,5.4)(9.5,7.75)
\psline(4.3,5.45)(4.8,6.3)

\end{pspicture}
\end{center} 
\caption{The lattice of divisors of the Garside element~$\Delta=\rho_3^4$ in $M_3$ for left-divisibility.}
\label{simples_3}
\end{figure}

\subsection{Link between the various presentations}\label{link_pres}

As mentioned in Section~\ref{garside_torus}, the presentations\begin{align} 
\label{cyclic} &\langle~ x_1, x_2, \dots, x_n \ \vert\  x_1 x_2 \cdots x_n x_1 = x_2 x_3 \cdots x_n x_1 x_2 = \dots = x_n x_1 x_2 \cdots x_n~\rangle\\ \label{toric} &\langle~ x,y \ \vert\ x^n=y^{n+1} ~\rangle
\end{align}
yield two distinct Garside structures on the $(n,n+1)$-torus knot group $G(n,n+1)$. We have already seen in Proposition~\ref{prop_isom_torus} how to pass from the defining presentation of $G(M_n)$ to the presentation~\eqref{toric}. For the other presentation above we have:

\begin{prop}\label{prop:iso}
The map $$\rho_1\mapsto x_1, \rho_2\mapsto x_n x_1, \rho_3\mapsto x_{n-1} x_n x_1, \dots, \rho_n \mapsto x_2 x_3 \cdots x_n x_1$$ extends to a group isomorphism $\phi$ between the group with presentation~\eqref{eq_main} and the group with presentation~\eqref{cyclic}, with inverse $\psi$ given by $$x_1 \mapsto \rho_1, x_n \mapsto \rho_2 \rho_1^{-1}, x_{n-1} \mapsto \rho_3 \rho_2^{-1}, \dots, x_2 \mapsto \rho_n \rho_{n-1}^{-1}.$$
\end{prop}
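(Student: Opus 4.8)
The plan is to verify that the two assignments $\phi$ and $\psi$ are well-defined group homomorphisms and that they are mutually inverse; since both presented groups are isomorphic to $G(n,n+1)$ (Proposition~\ref{prop_isom_torus} and the discussion in Section~\ref{garside_torus}), this is really a bookkeeping task of checking relations, but it is worth carrying out carefully because the relation set~\eqref{cyclic} is the cyclic one with $n$ relations and the bijection between indices is reversed in a slightly non-obvious way ($\rho_{k}$ maps to a product ending in $x_1$ and running backwards from $x_{n-k+2}$). First I would record the closed forms $\rho_k \mapsto x_{n-k+2} x_{n-k+3} \cdots x_n x_1$ for $2 \le k \le n$ and $\rho_1 \mapsto x_1$, so that the image of $\rho_1 \rho_n$ telescopes to $x_1 x_2 x_3 \cdots x_n x_1$, which is exactly the common value of all the words in~\eqref{cyclic}; call this element $z$.

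Next I would check that $\phi$ respects the defining relations $\rho_1 \rho_n \rho_i = \rho_{i+1} \rho_n$ of~\eqref{eq_main}. The left-hand side maps to $z \cdot \phi(\rho_i)$; using the relations in~\eqref{cyclic} one sees that $z$ can be rewritten as $x_2 x_3 \cdots x_n x_1 x_2$, $x_3 x_4 \cdots x_n x_1 x_2 x_3$, etc., i.e. as a cyclic word starting at any position, and choosing the rotation starting at $x_{n-i+2}$ lets $z \cdot \phi(\rho_i) = z \cdot x_{n-i+2}\cdots x_n x_1$ be reassembled as $x_{n-i+1} x_{n-i+2} \cdots x_n x_1 \cdot (x_2 x_3 \cdots x_n x_1) = \phi(\rho_{i+1}) \cdot \phi(\rho_n)$. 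Symmetrically, I would check that $\psi$ respects the cyclic relations of~\eqref{cyclic}: the image under $\psi$ of $x_{j} x_{j+1} \cdots x_n x_1 x_2 \cdots x_j$ telescopes (the $\rho_{k}\rho_{k-1}^{-1}$ factors collapse) to a single $\rho$-word independent of $j$, which is the content of the one-relator family defining~\eqref{eq_main}. Here I would use the already-established identities of Section~\ref{sec:pres} (in particular Proposition~\ref{garside_divisors}(1) and Relation~\eqref{eq_move}, $\rho_1(\rho_n\rho_1)^k = \rho_{k+1}\rho_n^k$) to do the rewriting inside $G(M_n)$ rather than by brute force in the free group.

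Finally I would check $\psi \circ \phi = \mathrm{id}$ and $\phi \circ \psi = \mathrm{id}$ on generators. For $\psi(\phi(\rho_k))$ with $k \ge 2$ one gets $\psi(x_{n-k+2}\cdots x_n x_1) = (\rho_{k}\rho_{k-1}^{-1})(\rho_{k-1}\rho_{k-2}^{-1})\cdots(\rho_2\rho_1^{-1})\rho_1 = \rho_k$, a clean telescoping; and $\psi(\phi(\rho_1)) = \psi(x_1) = \rho_1$. Conversely $\phi(\psi(x_1)) = \phi(\rho_1) = x_1$, and $\phi(\psi(x_{n-k+2})) = \phi(\rho_k \rho_{k-1}^{-1}) = (x_{n-k+2}\cdots x_n x_1)(x_{n-k+3}\cdots x_n x_1)^{-1} = x_{n-k+2}$. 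Since the two maps are mutually inverse on generators and both are homomorphisms, they are inverse isomorphisms.

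I do not expect a genuine obstacle here; the only mild subtlety — and the step I would be most careful about — is keeping the index arithmetic straight in the cyclic relation~\eqref{cyclic}, since the correspondence $\rho_k \leftrightarrow x_{n-k+2}$ reverses orientation, and one must be sure that "rotating" the cyclic word is legitimate, i.e. that each of the $n$ displayed words in~\eqref{cyclic} equals the common element $z = x_1 x_2 \cdots x_n x_1$ after the relations are imposed. Once that normalization of $z$ is in hand, both relation-checks and both telescoping computations are routine.
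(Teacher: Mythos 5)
Your proposal is correct and follows essentially the same route as the paper: verify that $\phi$ respects the relations of~\eqref{eq_main} by rotating the common cyclic word $z=x_1x_2\cdots x_nx_1$, verify that $\psi$ respects the relations of~\eqref{cyclic} by the telescoping of the factors $\rho_k\rho_{k-1}^{-1}$, and note that the two maps are mutually inverse on generators (a point the paper dismisses as immediate). The index bookkeeping you flag as the main risk is exactly the content of the paper's underbrace manipulations, and your closed form $\phi(\rho_k)=x_{n-k+2}\cdots x_nx_1$ matches it.
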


\begin{proof}
The fact that the two defined maps are inverse to each other is immediate, hence we only need to show that they extend to group homomorphisms. To this end, we first show that the $\phi(\rho_i)$'s satisfy the defining relations of presentation~\eqref{eq_main}, which is enough to conclude that $\phi$ is a homomorphism. We have $$\phi(\rho_1)\phi(\rho_n)\phi(\rho_1)=\underbrace{x_1 x_2 \cdots x_n x_1}_{=x_n x_1 x_2\cdots x_n} x_1=\phi(\rho_2)\phi(\rho_n).$$
Now let $1< i < n$. We have $$\phi(\rho_1) \phi(\rho_n) \phi(\rho_i)=x_1 x_2 x_3 \cdots x_n x_1 x_{n-i+2} \cdots x_n x_1$$
and \begin{align*}\phi(\rho_{i+1}) \phi(\rho_n)&=x_{n-i+1} \cdots x_n x_1 x_2 x_3 \cdots x_n x_1\\ &=\underbrace{(x_{n-i+1} \cdots x_n x_1 x_2 x_3 \cdots x_{n-i+1})}_{=x_1 x_2 \cdots x_n x_1} x_{n-i+2} \cdots x_n x_1=\phi(\rho_1) \phi(\rho_n) \phi(\rho_i),\end{align*}
hence $\phi$ is a homomorphism. 

Similarly, we have to show that the $\psi(x_i)$ satisfy the defining relations of Presentation~\eqref{cyclic}. We have $$\psi(x_n) \psi(x_1) \psi(x_2)\cdots \psi(x_n)=\rho_2 \rho_n \rho_1^{-1}=\rho_1 \rho_n=\psi(x_1) \psi(x_2)\cdots \psi(x_n) \psi(x_1).$$ Now let $1< i < n$. We have 
\begin{align*}
&~\psi(x_i) \psi(x_{i+1}) \cdots \psi(x_n) \psi(x_1) \cdots \psi(x_i)\\ &=(\rho_{n+2-i}\rho_{n+1-i}^{-1})(\rho_{n+1-i}\rho_{n-i}^{-1}) \cdots (\rho_2 \rho_1^{-1}) \rho_1 (\rho_n \rho_{n-1}^{-1}) (\rho_{n-1} \rho_{n-2}^{-1}) \cdots (\rho_{n+2-i}\rho_{n+1-i}^{-1})\\
&=\rho_{n+2-i}\rho_n \rho_{n+1-i}^{-1}=\rho_1 \rho_n \rho_{n+1-i}\rho_{n+1-i}^{-1}=\rho_1 \rho_n=\psi(x_n) \psi(x_1) \psi(x_2)\cdots \psi(x_n),
\end{align*}
hence $\psi$ is also a homomorphism. This concludes the proof. 
\end{proof}

\subsection{Link with braid groups of complex reflection groups}\label{sec:cplex}

The exceptional complex reflection group $G_{12}$ has three generators $s,t,u$ and relations $s^2=t^2=u^2=1$, $stus=tust=ustu$. Its braid group~$\mathcal{B}(G_{12})$ has generators $\sigma, \tau, \upsilon$ subject to the same relations as $s,t,u$ except the quadratic ones (see~\cite{BMR}). We can deduce the following from Proposition~\ref{prop:iso}:

\begin{cor}\label{cor_g12}
 The complex reflection group $G_{12}$ has a presentation with generators $r_1, r_2, r_3$ and relations $$r_1 r_3 r_1= r_2 r_3, ~r_1 r_3 r_2= r_3^2, ~r_1^2=1.$$
\end{cor}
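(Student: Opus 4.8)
The plan is to derive this presentation of $G_{12}$ by combining the known presentation of the braid group $\mathcal{B}(G_{12})$ with the isomorphism $\phi$ of Proposition~\ref{prop:iso} and the identification $G(3,4)\cong \mathcal{B}(G_{12})$. First I would recall that $G_{12}\cong G(3,4)$ as groups once one adds the appropriate torsion relations; more precisely, the braid group $\mathcal{B}(G_{12})$ with generators $\sigma,\tau,\upsilon$ and the single braid relation $\sigma\tau\upsilon\sigma=\tau\upsilon\sigma\tau=\upsilon\sigma\tau\upsilon$ is isomorphic to $G(3,4)$, and this is exactly the group with presentation~\eqref{cyclic} for $n=3$, namely $\langle x_1,x_2,x_3 \mid x_1x_2x_3x_1 = x_2x_3x_1x_2 = x_3x_1x_2x_3\rangle$, under $x_i\mapsto$ the three cyclic generators. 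Then $G_{12}$ is the quotient of this group by the relations $x_i^2=1$ for $i=1,2,3$ (equivalently $\sigma^2=\tau^2=\upsilon^2=1$).

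Next I would transport this presentation across the isomorphism $\phi$ of Proposition~\ref{prop:iso}, whose inverse $\psi$ is given by $x_1\mapsto \rho_1$, $x_3\mapsto \rho_2\rho_1^{-1}$, $x_2\mapsto \rho_3\rho_2^{-1}$ (taking $n=3$). Writing $r_i$ for the image of $\rho_i$, the group $G(M_3)\cong G(3,4)$ has presentation with generators $r_1,r_2,r_3$ and the two defining relations of $M_3$ for $n=3$, namely $r_1r_3r_1=r_2r_3$ and $r_1r_3r_2=r_3^2$. It then remains to rewrite the three torsion relations $x_i^2=1$ in terms of the $r_i$. Using $\psi$ these become $r_1^2=1$, $(r_2 r_1^{-1})^2 = 1$, and $(r_3 r_2^{-1})^2 = 1$. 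The claim of the corollary is that, modulo the two braid-type relations, all three of these are equivalent to the single relation $r_1^2=1$.

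The main obstacle — really the only nontrivial point — is verifying that $r_1^2=1$ together with $r_1r_3r_1=r_2r_3$ and $r_1r_3r_2=r_3^2$ forces $(r_2r_1^{-1})^2=1$ and $(r_3r_2^{-1})^2=1$, and conversely that these torsion relations suffice to recover $r_1^2=1$. For the forward direction one can argue directly in the quotient: from $r_1r_3r_1=r_2r_3$ one gets $r_2 = r_1r_3r_1r_3^{-1}$, so $r_2r_1^{-1}=r_1r_3r_1r_3^{-1}r_1^{-1}$, which is a conjugate of $r_1$ (using $r_1^2=1$, i.e. $r_1=r_1^{-1}$, one checks $r_2r_1^{-1} = r_1(r_3r_1r_3^{-1})r_1^{-1}$ is conjugate to $r_3 r_1 r_3^{-1}$, itself conjugate to $r_1$); hence it squares to $1$. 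Similarly, from $r_1r_3r_2=r_3^2$ one solves $r_3r_2^{-1} = r_3 r_2 r_3^{-1} \cdot (r_3 r_2^{-1} r_2^{-1} r_3^{-1} \cdots)$ — more cleanly, $r_3^2 = r_1 r_3 r_2$ gives $r_2 = r_3^{-1}r_1^{-1}r_3^2$, so $r_3 r_2^{-1} = r_3 r_3^{-2} r_1 r_3 = r_3^{-1}r_1 r_3$, again a conjugate of $r_1$, hence squaring to $1$. The reverse direction is automatic since $r_1^2=1$ is literally one of the three relations $x_i^2=1$ pulled back. Once these identities are recorded, the corollary follows: $G_{12}$ is the quotient of the group with presentation~\eqref{eq_main} at $n=3$ by $x_i^2=1$, which by the above is the same as adjoining just $r_1^2=1$, giving exactly the presentation $\langle r_1,r_2,r_3 \mid r_1r_3r_1=r_2r_3,\ r_1r_3r_2=r_3^2,\ r_1^2=1\rangle$.
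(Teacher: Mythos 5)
Your proof is correct and follows essentially the same route as the paper: identify $G(M_3)$ with $\mathcal{B}(G_{12})$ via Proposition~\ref{prop:iso}, add the torsion relations, and reduce the three squaring relations to the single relation $r_1^2=1$ by a conjugacy argument. The only difference is that you verify explicitly in the $r_i$-coordinates that the images of $x_2$ and $x_3$ are conjugates of $r_1$, whereas the paper simply cites the fact that $\sigma$, $\tau$, $\upsilon$ are pairwise conjugate in $\mathcal{B}(G_{12})$ so that adding $\sigma^2=1$ alone suffices; the two arguments amount to the same computation.
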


In Corollary~\ref{prop_sym_pres} below, we give analogous presentations for the symmetric groups~$\mathfrak{S}_n$. 

\begin{proof}
It follows from Proposition~\ref{prop:iso} that $G(M_3)\cong \mathcal{B}(G_{12})$, as $\mathcal{B}(G_{12})$ has the presentation~\eqref{cyclic} for $n=3$ if we set $\sigma=x_1$, $\tau=x_2$, $\upsilon=x_3$. To show the statement, note that the given relations are exactly those of~\eqref{eq_main} (except that the generators are denoted by $r_i$ instead of $\rho_i$), with the additional relation $r_1^2=1$. By the first point $G(M_3)$ is isomorphic to the braid group of $G_{12}$. Now $G_{12}$ is obtained from $\mathcal{B}(G_{12})$ by adding the relations $\sigma^2=\tau^2=\upsilon^2=1$, but since $\sigma, \tau$ and $\upsilon$ are all conjugate in $\mathcal{B}(G_{12})$, it suffices to add the relation $\sigma^2=1$ to get a presentation of $G_{12}$; this translates into the relation $\rho_1^2=1$. 
\end{proof}

\begin{rmq}\label{rmq:cplx_gps} Corollary~\ref{cor_g12} yields a new Garside structure on $\mathcal{B}(G_{12})$. Note that the complex reflection group $G_{12}$ is not well-generated. By work of Bessis~\cite{Dualcplx}, every well-generated irreducible complex reflection group admits a dual braid monoid, in particular, the corresponding braid group is a Garside group. For $G_{12}$ and as suggested by Bessis~\cite[Section~6.4, Problem~10]{Dual} (see also Section~\ref{garside_torus} above), one can nevertheless still define a \textit{dual} braid monoid in some sense. Almost all braid groups attached to irreducible complex reflection groups which are not well-generated have been shown to be Garside groups: see Dehornoy--Paris~\cite[Proposition~5.2 and Example~5]{DP} (for $G_{15}$, $G_7$, $G_{11}$, $G_{19}$, $G(2de,2e,2)$ for $d>1$, which all have isomorphic braid group, $G_{12}$, and $G_{22}$), Picantin~\cite[Exemples 11, 13]{Picantin} (for $G_{13}$, whose braid group is isomorphic to the Artin--Tits group of type $I_2(6)=G_2$), and Corran--Lee--Lee~\cite{CLL} (for the remaining imprimitive groups). See also~\cite[Example~IX.3.25]{Garside}. It seems that the only irreducible complex reflection group for which it remains open to determine whether the corresponding braid group is a Garside group or not is $G_{31}$.
\end{rmq}

\begin{rmq}\label{rmq_complex_interval}
In view of the previous remark, it is natural to wonder if $G(n,n+1)$ is the braid group of a complex reflection group in a natural way. For $n=2$ we know that $G(2,3)\cong G(M_2)$ is isomorphic to the~$3$-strand braid group, which is the braid group of several irreducible complex reflection groups (obtained by adding the relation~$\rho_1^i=1$ for some~$i>1$ to the presentation of~$G(M_2)$). For~$i=2$ we get the symmetric group~$\mathfrak{S}_3$, and for $i=3, 4, 5$ the exceptional groups~$G_4$, $G_8$, and $G_{16}$ respectively--note that these presentations already occur in Coxeter's paper~\cite{Coxeter} from~1959. It is easy to check that the Garside monoid~$M_2$ can be obtained from the finite group~$G_4$ as an~\textit{interval group}, another method for producing Garside monoids; see~\cite[Section~0.5]{Dual} or~\cite[Chapter~VI]{Garside} for more details. Basically this method allows one to show that a monoid $M$ is a Garside monoid by realizing its lattice of simples in some (in general, but not necessarily finite) group $G$ which is a quotient of $G(M)$ (typically, for Artin-Tits groups of spherical type, both classical and dual Garside structures are obtained in this way and the group~$G$ is the corresponding Coxeter group). For~$n\geq 4$, adding the relation~$\rho_1^2=1$ to the presentation of~$G(M_n)$ seems to yield an infinite group, and the same can be expected for $i>2$. This suggests the question below.
\end{rmq}

\begin{question}
Let $n\geq 4$ and $i>1$. Consider the quotient~$\overline{G(M_n)}$ of $G(M_n)$ by the relation~$\rho_1^i=1$. Does this quotient admit a natural realization as an infinite complex reflection group ?
\end{question}

Note that the same question can be asked if we replace $\rho_1^2=1$ by $\rho_1^i=1$, $i\geq 3$ (even for $n=2$ and $n=3$ in the cases which are not covered by the above remark or Corollary~\ref{cor_g12}).

\section{Link with the braid group on $n$ strands}\label{sec:link}

In this section, we give a new presentation of the braid group~$\mathcal{B}_{n+1}$, obtained by adding suitable relations to the presentation~$\langle \S, \R \rangle$ of $G(M_n)$. Using it we show that the submonoid~$\Sigma_n$ of $\mathcal{B}_{n+1}$ generated by $\sigma_1, \sigma_1 \sigma_2, \dots, \sigma_1 \sigma_2 \cdots \sigma_n$ is an Ore monoid with group of fractions isomorphic to $\mathcal{B}_{n+1}$, and conjecture that this monoid admits an explicit finite presentation.

\begin{definition}\label{def_hn}
Let $\mathcal{H}_n^+$ be the monoid defined by the presentation \begin{equation}\label{eq:hn} \langle~\rho_1, \rho_2, \dots, \rho_n \ \vert\ \rho_1 \rho_j \rho_i=\rho_{i+1} \rho_j\text{~for~}1\leq i < j \leq n~\rangle\end{equation} 
\end{definition}

\begin{prop}\label{pres_bn}
There is an isomorphism between the group with presentation~\eqref{eq:hn} and the $(n+1)$-strand braid group~$\mathcal{B}_{n+1}$, given by $\rho_i\mapsto \sigma_1\sigma_2\cdots\sigma_i\text{~for~} 1\leq i \leq n$.
\end{prop}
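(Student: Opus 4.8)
The strategy is to exhibit a pair of mutually inverse group homomorphisms between the group $\mathcal{H}_n := \langle \rho_1, \dots, \rho_n \mid \rho_1 \rho_j \rho_i = \rho_{i+1}\rho_j \text{ for } 1\le i<j\le n\rangle$ and $\mathcal{B}_{n+1}$. In one direction, I would define $f: \mathcal{H}_n \to \mathcal{B}_{n+1}$ by $\rho_i \mapsto S_i := \sigma_1 \sigma_2 \cdots \sigma_i$ and check that the $S_i$ satisfy the defining relations of~\eqref{eq:hn}. The relation $\rho_1 \rho_n \rho_i = \rho_{i+1}\rho_n$ is already verified in Proposition~\ref{prop:surjective}; the remaining relations $\rho_1 \rho_j \rho_i = \rho_{i+1}\rho_j$ for $i<j<n$ are proved the same way — using $S_1 S_j = \sigma_1(\sigma_1 \cdots \sigma_j) = \sigma_1\sigma_1\sigma_2\sigma_1(\sigma_3\cdots\sigma_j)$ and more generally pushing a $\sigma_1$ through $S_j$ via braid relations to shift indices, then an induction on $i$ as in the proof of Proposition~\ref{prop:surjective} (the argument there never used $j=n$ in an essential way, only $j>i$). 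In the other direction, I would define $g: \mathcal{B}_{n+1} \to \mathcal{H}_n$ on the Artin generators by $\sigma_i \mapsto \rho_{i-1}^{-1}\rho_i$ for $1\le i\le n$, with the convention $\rho_0 = 1$ (so $\sigma_1 \mapsto \rho_1$). This is forced: if $\rho_i$ is to map to $\sigma_1\cdots\sigma_i$ then $\sigma_i = (\sigma_1\cdots\sigma_{i-1})^{-1}(\sigma_1\cdots\sigma_i)$ must map to $\rho_{i-1}^{-1}\rho_i$.

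The content of the proof is then twofold. First, check that $g$ is well defined, i.e. that the elements $t_i := \rho_{i-1}^{-1}\rho_i$ of $\mathcal{H}_n$ satisfy the braid relations $t_i t_{i+1} t_i = t_{i+1}t_i t_{i+1}$ and $t_i t_k = t_k t_i$ for $|i-k|>1$. Here I would translate the defining relation $\rho_1 \rho_j \rho_i = \rho_{i+1}\rho_j$ into a more usable form: rewriting it as $\rho_{i+1}^{-1}\rho_1 \rho_j = \rho_j \rho_i^{-1}$ (for $i<j$), equivalently $\rho_j^{-1}\rho_{i+1}^{-1}\rho_1\rho_j = \rho_i^{-1}$, which should let one "conjugate-commute" the $t_i$'s past each other. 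I expect that, after expressing everything in terms of the $t_i$, the braid relations among the $t_i$ drop out of the relations of $\mathcal{H}_n$ fairly mechanically, since $\rho_i = t_1 t_2 \cdots t_i$ and the relation $\rho_1\rho_j\rho_i = \rho_{i+1}\rho_j$ becomes a statement purely about the $t$'s that one recognizes as a consequence of the braid relations (indeed it is the image under $f^{-1}$ of the identity $S_1 S_j S_i = S_{i+1}S_j$, which lives in $\mathcal{B}_{n+1}$). Second, check $f\circ g = \mathrm{id}$ and $g\circ f = \mathrm{id}$: the composite $f\circ g$ sends $\sigma_i \mapsto \rho_{i-1}^{-1}\rho_i \mapsto S_{i-1}^{-1}S_i = \sigma_i$, and $g\circ f$ sends $\rho_i \mapsto S_i = \sigma_1\cdots\sigma_i \mapsto t_1 t_2\cdots t_i = (\rho_0^{-1}\rho_1)(\rho_1^{-1}\rho_2)\cdots(\rho_{i-1}^{-1}\rho_i) = \rho_i$ by telescoping. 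So once both maps are shown to be well-defined homomorphisms, bijectivity is immediate.

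The main obstacle is the verification that the $t_i = \rho_{i-1}^{-1}\rho_i$ satisfy the braid relations in $\mathcal{H}_n$ — this is where one genuinely uses the shape of presentation~\eqref{eq:hn}, and it requires some care with indices and with the fact that the relations of $\mathcal{H}_n$ only directly relate $\rho_1$, $\rho_j$, $\rho_i$, $\rho_{i+1}$. A clean way to organize it: first establish the auxiliary identities $\rho_1 (\rho_n \rho_1)^{k} = \rho_{k+1}\rho_n^k$ and the analogues already appearing in Proposition~\ref{garside_divisors} and Lemmata~\ref{rel_bis}, \ref{rel_bis_2} — these hold a fortiori in the quotient $\mathcal{H}_n^+$ of $M_n$, hence in the group $\mathcal{H}_n$ — and also the "mixed" relations $\rho_1 \rho_j \rho_i = \rho_{i+1}\rho_j$ for all $i<j$ by definition; then combine them to move the negative letters around. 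Alternatively, and perhaps more transparently, one can prove well-definedness of $g$ by a dimension-free trick: since $f$ is visibly surjective (the $S_i$ generate $\mathcal{B}_{n+1}$, as $\sigma_i = S_{i-1}^{-1}S_i$) and both groups are known (the source is a quotient-type presentation, the target is $\mathcal{B}_{n+1}$), it suffices to produce \emph{any} homomorphism $\mathcal{B}_{n+1}\to \mathcal{H}_n$ splitting $f$; but constructing such a splitting still amounts to checking the braid relations for the $t_i$, so there is no shortcut. The second assertion of the proposition — that the image of $\mathcal{H}_n^+$ in $\mathcal{B}_{n+1}$ is $\Sigma_n$ — is then essentially by definition: the image is generated by the images $S_1, \dots, S_n$ of the $\rho_i$, which is exactly the generating set of $\Sigma_n$, and surjectivity of $\mathcal{H}_n^+ \twoheadrightarrow \Sigma_n$ is built into the construction of $\Sigma_n$ as the submonoid generated by those elements.
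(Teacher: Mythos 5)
Your architecture is exactly the paper's: the same pair of maps $f:\rho_i\mapsto\sigma_1\sigma_2\cdots\sigma_i$ and $g:\sigma_i\mapsto\rho_{i-1}^{-1}\rho_i$ (with $\rho_0=1$), the same observation that the computation in Proposition~\ref{prop:surjective} carries over verbatim from $j=n$ to arbitrary $j>i$, and the same telescoping check that the two composites are the identity. The only place where your proposal falls short of a proof is precisely the only place where there is real work to do: the verification that the elements $t_i=\rho_{i-1}^{-1}\rho_i$ satisfy the braid relations in the group with presentation~\eqref{eq:hn}. You correctly single this out as the main obstacle, but you only predict that it ``drops out fairly mechanically,'' and the heuristic you offer in support --- that the relation $\rho_1\rho_j\rho_i=\rho_{i+1}\rho_j$, rewritten in the $t_i$'s, is the image under $f^{-1}$ of an identity holding in $\mathcal{B}_{n+1}$ --- is circular at this stage: $f^{-1}$ exists only once the proposition is proved, and that observation establishes that the relations of~\eqref{eq:hn} hold among the $S_i$ (well-definedness of $f$), not the converse implication needed for $g$.

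The paper closes this gap with a short explicit computation, which is worth recording since it is the substance of the proof. For the braid relation $t_it_{i+1}t_i=t_{i+1}t_it_{i+1}$ it combines the three instances $\rho_1\rho_i\rho_{i-1}=\rho_i^2$, $\rho_1\rho_{i+1}\rho_i=\rho_{i+1}^2$ and $\rho_1\rho_{i+1}\rho_{i-1}=\rho_i\rho_{i+1}$ of the defining relations; for the commutation $t_it_j=t_jt_i$ with $j>i+1$ it uses the four instances attached to the index pairs $(j,i)$, $(j,i-1)$, $(j-1,i)$ and $(j-1,i-1)$. Your suggested rewriting $\rho_{i+1}^{-1}\rho_1\rho_j=\rho_j\rho_i^{-1}$ is the right kind of move and the plan would succeed if carried out, but as written the well-definedness of $g$ is asserted rather than established.
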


\begin{proof}
We show that the assignment~$\rho_i \mapsto \sigma_1 \sigma_2 \cdots \sigma_i$, $1\leq i \leq n$, extends to a group isomorphism $f$ between the two groups. To this end, it suffices to show that $f$ extends to a group homomorphism, and that the assignment $\sigma_i \mapsto \rho_{i-1}^{-1} \rho_i$ (with the convention $\rho_0=1$) extends to a group homomorphism $g$ between $\mathcal{B}_{n+1}$ and the group with presentation~\eqref{eq:hn}. Indeed both maps are clearly inverse to each other. 

Showing that the~$f(\rho_i)$'s satisfy the claimed relations can be checked by exactly the same computation as the one given in the proof of Proposition~\ref{prop:surjective} where it is done in the case~$j=n$ (or just derived from it by invoking the embeddings~$\mathcal{B}_k \subseteq \mathcal{B}_{k+1}$). Hence $f$ is a group homomorphism. 

Conversely, let us check that the~$g(\sigma_i)$'s satisfy the braid relations. Let $1\leq i \leq n-1$. Using the relations~$\rho_1 \rho_i \rho_{i-1}=\rho_i^2$ and $\rho_1 \rho_{i+1} \rho_i= \rho_{i+1}^2$ we get $$\rho_{i-1}^{-1}\rho_i^{-1} \rho_1 \rho_{i+1} \rho_i= \rho_i^{-2} \rho_1 \rho_{i+1}^2.$$
Replacing $\rho_i^{-1} \rho_1 \rho_{i+1}$ by $\rho_{i+1} \rho_{i-1}^{-1}$ in each side (using the relation~$\rho_1 \rho_{i+1} \rho_{i-1}=\rho_i \rho_{i+1}$) we get the equality $$\rho_{i-1}^{-1} \rho_{i+1} \rho_{i-1}^{-1} \rho_i= \rho_i^{-1} \rho_{i+1} \rho_{i-1}^{-1} \rho_{i+1}.$$ The left hand side of the above equality is equal to $g(\sigma_i) g(\sigma_{i+1}) g(\sigma_i)$, while the right hand side is equal to $g(\sigma_{i+1}) g(\sigma_i) g(\sigma_{i+1})$, thus establishing the braid relation $$g(\sigma_i) g(\sigma_{i+1}) g(\sigma_i)=g(\sigma_{i+1}) g(\sigma_i) g(\sigma_{i+1}).$$ It remains to check that $g(\sigma_i) g(\sigma_j)=g(\sigma_j) g(\sigma_i)$ holds whenever $1\leq i < j-1 \leq n-1$. Using the relations~$\rho_1 \rho_j \rho_i=\rho_{i+1} \rho_j$ and $\rho_1 \rho_{j-1} \rho_{i-1}=\rho_i \rho_{j-1}$ we can write $$\rho_{i-1}^{-1} \rho_{j-1}^{-1} \rho_1^{-1} \rho_{i+1} \rho_j=\rho_{j-1}^{-1} \rho_i^{-1} \rho_1 \rho_j \rho_i.$$
Replacing $\rho_{j-1}^{-1} \rho_1^{-1} \rho_{i+1}$ by $\rho_i \rho_{j-1}^{-1}$ in the left hand side (using the relation~$\rho_1 \rho_{j-1}\rho_i=\rho_{i+1} \rho_{j-1}$) and $\rho_i^{-1} \rho_1 \rho_j$ by $\rho_j \rho_{i-1}^{-1}$ in the right hand side (using the relation~$\rho_1 \rho_j \rho_{i-1}=\rho_i \rho_j$), we get the equality $$\rho_{i-1}^{-1} \rho_i \rho_{j-1}^{-1} \rho_j=\rho_{j-1}^{-1} \rho_j \rho_{i-1}^{-1} \rho_i.$$
This equality is nothing but the equality~$g(\sigma_i) g(\sigma_j)=g(\sigma_j) g(\sigma_i)$. This shows that $g$ is a group homomorphism, and concludes the proof.     
\end{proof}

\begin{cor}\label{prop_sym_pres}
The symmetric group~$\mathfrak{S}_{n+1}$ admits the presentation \[\bigg\langle~ r_1, r_2, \dots, r_n \ \bigg\vert\ \begin{matrix} {r_1}^2=1,\\ ~r_1 r_j r_i= r_{i+1} r_j,~\text{for}~ 1\leq i < j \leq n.\end{matrix}~\bigg\rangle\] where $r_i$ corresponds to the cycle $(1,2,\dots,i+1)$ for $1\leq i \leq n$.
\end{cor}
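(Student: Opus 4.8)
The plan is to exhibit the group presented in the statement as a quotient of $\mathcal{B}_{n+1}$ and then invoke the classical Coxeter presentation of the symmetric group. First I would use Proposition~\ref{pres_bn}: the group obtained from presentation~\eqref{eq:hn} by deleting the relation $r_1^2=1$ is isomorphic to $\mathcal{B}_{n+1}$ via $f\colon r_i\mapsto \sigma_1\sigma_2\cdots\sigma_i$, and in particular $f(r_1)=\sigma_1$. Hence the group $G$ of the statement is isomorphic to the quotient $\mathcal{B}_{n+1}/N$, where $N$ is the normal closure of $\sigma_1^2$ in $\mathcal{B}_{n+1}$.

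Next I would recall that the Artin generators $\sigma_1,\dots,\sigma_n$ of $\mathcal{B}_{n+1}$ are pairwise conjugate: the braid relation $\sigma_j\sigma_{j+1}\sigma_j=\sigma_{j+1}\sigma_j\sigma_{j+1}$ rearranges to $\sigma_{j+1}^{-1}\sigma_j\sigma_{j+1}=\sigma_j\sigma_{j+1}\sigma_j^{-1}$, exhibiting a conjugate of $\sigma_j$ as a conjugate of $\sigma_{j+1}$; transitivity gives $\sigma_1\sim\sigma_j$ for all $j$. Consequently $N$ contains every $\sigma_j^2$, so $N=\langle\langle \sigma_1^2,\dots,\sigma_n^2\rangle\rangle$, and $\mathcal{B}_{n+1}/N$ is the group defined by the braid presentation of $\mathcal{B}_{n+1}$ together with the relations $\sigma_j^2=1$ for $1\leq j\leq n$. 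This is precisely the standard Coxeter presentation of $\mathfrak{S}_{n+1}$, under which $\sigma_j$ is sent to the transposition $s_j=(j,j+1)$. Therefore $G\cong\mathfrak{S}_{n+1}$.

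It remains to identify the images of the generators $r_i$. Under the composite $G\xrightarrow{\ \sim\ }\mathcal{B}_{n+1}\twoheadrightarrow\mathfrak{S}_{n+1}$ we have $r_i\mapsto \sigma_1\cdots\sigma_i\mapsto s_1s_2\cdots s_i$, and an easy induction on $i$ shows $s_1s_2\cdots s_i=(1,2,\dots,i+1)$: assuming $s_1\cdots s_{i-1}=(1,2,\dots,i)$, multiplying on the right by $s_i=(i,i+1)$ gives $(1,2,\dots,i)(i,i+1)=(1,2,\dots,i+1)$. This establishes the asserted presentation of $\mathfrak{S}_{n+1}$ with $r_i$ corresponding to the cycle $(1,2,\dots,i+1)$.

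There is no serious obstacle here; the only points requiring a little care are the observation that conjugacy of the Artin generators lets the single quadratic relation $r_1^2=1$ collapse $\mathcal{B}_{n+1}$ all the way down to $\mathfrak{S}_{n+1}$, and the (composition-convention–dependent) bookkeeping identifying $s_1\cdots s_i$ with the $(i+1)$-cycle.
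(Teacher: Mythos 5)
Your proposal is correct and follows essentially the same route as the paper: both invoke Proposition~\ref{pres_bn} to identify the group with presentation~\eqref{eq:hn} with $\mathcal{B}_{n+1}$, and both use the fact that the Artin generators are pairwise conjugate to conclude that imposing the single relation $\sigma_1^2=1$ already yields the Coxeter presentation of $\mathfrak{S}_{n+1}$. Your write-up merely adds explicit detail (the normal-closure formulation and the inductive identification of $s_1\cdots s_i$ with the $(i+1)$-cycle) that the paper leaves implicit.
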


\begin{proof}
The claimed set of relations is given by the relations in~\eqref{eq:hn}, except that we added the relation stating that the square of the first generator is equal to one. As all the~$\sigma_i$'s are conjugate in $\mathcal{B}_{n+1}$, it suffices to add to the braid relations the relation~$\sigma_1^2=1$ to get a presentation of the symmetric group~$\mathfrak{S}_{n+1}$. In view of Proposition~\ref{pres_bn} this is equivalent to adding the relation~$\rho_1^2=1$ to the set of relations given in~\eqref{eq:hn}. 
\end{proof}

Investigating the properties of the monoid $\mathcal{H}_n^+$ appears as a natural question. 

\begin{lemma}\label{lem:ore}
In the monoids $\Sigma_n$ and $\mathcal{H}_n^+$, every two elements $x,y$ admit both a common right-multiple and a common left-multiple.
\end{lemma}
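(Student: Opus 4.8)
The plan is to deduce the statement from the much stronger fact that $M_n$ is a Garside monoid, by pushing forward common multiples along surjections.

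First I would set up two surjective monoid homomorphisms out of $M_n$. Since every defining relation of $M_n$ (the relations $\rho_1\rho_n\rho_i=\rho_{i+1}\rho_n$ for $1\le i<n$, which are exactly the $j=n$ instances of~\eqref{eq:hn}) is among the defining relations of $\mathcal{H}_n^+$, sending each generator to itself gives a surjective monoid homomorphism $p\colon M_n\twoheadrightarrow\mathcal{H}_n^+$. For $\Sigma_n$, Proposition~\ref{prop:surjective} shows that the elements $S_i:=\sigma_1\sigma_2\cdots\sigma_i\in\mathcal{B}_{n+1}$, $1\le i\le n$, satisfy the defining relations~\eqref{eq_main} of $M_n$; hence $\rho_i\mapsto S_i$ extends to a monoid homomorphism $M_n\to\mathcal{B}_{n+1}$ whose image is the submonoid generated by the $S_i$, that is, $\Sigma_n$, so we obtain a surjection $q\colon M_n\twoheadrightarrow\Sigma_n$. (In fact, by Proposition~\ref{pres_bn} the $S_i$ also satisfy the remaining relations of~\eqref{eq:hn}, so $q$ factors through $p$; thus it would even suffice to treat $\mathcal{H}_n^+$ and transport the result along the induced surjection $\mathcal{H}_n^+\twoheadrightarrow\Sigma_n$.)

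Then I would use the elementary observation that the property ``every pair of elements has a common right-multiple'' (resp.\ left-multiple) is inherited by homomorphic images of monoids: if $\psi\colon M\to N$ is a surjective monoid homomorphism, $x,y\in N$, and $\tilde x,\tilde y\in M$ are preimages, then a common right-multiple $c=a\tilde x=b\tilde y$ of $\tilde x,\tilde y$ in $M$ yields the common right-multiple $\psi(c)=\psi(a)\,x=\psi(b)\,y$ of $x,y$ in $N$, and symmetrically for left-multiples. By Theorem~\ref{garside_gn}, $M_n$ is a Garside monoid, so any two of its elements admit a right-lcm and a left-lcm, in particular a common right-multiple and a common left-multiple; applying the observation to $p$ and to $q$ gives the statement for $\mathcal{H}_n^+$ and for $\Sigma_n$ respectively.

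There is essentially no obstacle here: everything reduces to the well-definedness and surjectivity of $q$, which is precisely the content of Proposition~\ref{prop:surjective}, and to the fact that $\mathcal{H}_n^+$ is a quotient of $M_n$, which is built into its presentation. As an alternative, purely internal argument avoiding $M_n$'s Garside structure, one can use that by Proposition~\ref{garside_divisors}(2) the element $\Delta=\rho_n^{n+1}$ is central in $M_n$ and every generator both left- and right-divides it; these two properties survive in any homomorphic image, and then, exactly as in the proof of Lemma~\ref{cond_delta_lcm}, a product of $k$ generators both left- and right-divides $\Delta^k$, so for $x,y$ written as products of $k$, resp.\ $\ell$, generators the element $\Delta^{\max(k,\ell)}$ is simultaneously a common right-multiple and a common left-multiple of $x$ and $y$.
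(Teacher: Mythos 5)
Your proof is correct and follows essentially the same route as the paper: both $\Sigma_n$ and $\mathcal{H}_n^+$ are exhibited as quotients (homomorphic images) of the Garside monoid $M_n$, and the existence of common left- and right-multiples is transported along the surjections. The paper's own proof is a more terse version of exactly this argument; your extra remarks (the factorization of $q$ through $p$, and the alternative argument via centrality of $\Delta$) are sound but not needed.
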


\begin{proof}
This follows immediately from the fact that both $\Sigma_n$ and $\mathcal{H}_n^{+}$ are quotients of the Garside monoid~$M_n$. Indeed, the presentation of $\mathcal{H}_n^+$ is obtained from the presentation $\langle \S, \R\rangle$ of $M_n$ by adding relations, and under the isomorphism of Proposition~\ref{pres_bn}, the submonoid~$\Sigma_n$ is precisely the submonoid of the group with presentation~\eqref{eq:hn} generated by $\rho_1, \rho_2, \dots, \rho_n$, which is a quotient of $\mathcal{H}_n^+$.\end{proof}

As a corollary we get:

\begin{prop}\label{prop_ore}
The submonoid $\Sigma_n$ of $\mathcal{B}_{n+1}$ is an Ore monoid, with group of fractions isomorphic to $\mathcal{B}_{n+1}$. 
\end{prop}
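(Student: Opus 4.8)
The plan is to apply Ore's Theorem (Theorem~\ref{thm:ore}) and then pin down the group of fractions by comparing $G(\Sigma_n)$ with the presentation~\eqref{eq:hn}.

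First I would observe that $\Sigma_n$ is cancellative, being a submonoid of the group $\mathcal{B}_{n+1}$, and that by Lemma~\ref{lem:ore} any two of its elements admit a common left-multiple. Theorem~\ref{thm:ore} then immediately gives that $\Sigma_n$ is an Ore monoid and that it embeds in its group of fractions $G(\Sigma_n)$; moreover $G(\Sigma_n)$ is generated as a group by the elements $\rho_i:=\sigma_1\sigma_2\cdots\sigma_i$, $1\le i\le n$, since these generate $\Sigma_n$ as a monoid.

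Next I would identify $G(\Sigma_n)$ with $\mathcal{B}_{n+1}$. On the one hand, $\Sigma_n$ generates $\mathcal{B}_{n+1}$ as a group, because $\sigma_i=\rho_{i-1}^{-1}\rho_i$ with the convention $\rho_0=1$; hence, by the universal property of the group of fractions, the inclusion $\Sigma_n\hookrightarrow\mathcal{B}_{n+1}$ extends to a surjective group homomorphism $\pi\colon G(\Sigma_n)\twoheadrightarrow\mathcal{B}_{n+1}$ with $\pi(\rho_i)=\sigma_1\sigma_2\cdots\sigma_i$. On the other hand, the relations $\rho_1\rho_j\rho_i=\rho_{i+1}\rho_j$ for $1\le i<j\le n$ hold in $\Sigma_n$, hence in $G(\Sigma_n)$: this is precisely the computation carried out in the proof of Proposition~\ref{prop:surjective} (which treats the case $j=n$; the general case is identical, or follows from it through the standard inclusions $\mathcal{B}_{j+1}\subseteq\mathcal{B}_{n+1}$). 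Writing $H$ for the group with presentation~\eqref{eq:hn}, the assignment $\rho_i\mapsto\rho_i$ therefore extends to a surjective group homomorphism $q\colon H\twoheadrightarrow G(\Sigma_n)$. The composite $\pi\circ q\colon H\to\mathcal{B}_{n+1}$ sends each generator $\rho_i$ to $\sigma_1\sigma_2\cdots\sigma_i$, so by Proposition~\ref{pres_bn} it is an isomorphism. Consequently $q$ is injective; being also surjective, it is an isomorphism, and hence $\pi=(\pi\circ q)\circ q^{-1}$ is an isomorphism as well. This yields $G(\Sigma_n)\cong\mathcal{B}_{n+1}$ and completes the proof.

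I do not expect a real obstacle here, the statement being essentially an assembly of Lemma~\ref{lem:ore}, Ore's Theorem, and Proposition~\ref{pres_bn}. The one point worth keeping in mind is that we do not yet have an explicit finite monoid presentation of $\Sigma_n$ at hand (that is the content of Conjecture~\ref{conj}), so the identification of $G(\Sigma_n)$ with $\mathcal{B}_{n+1}$ cannot simply be read off from the ``presentation'' clause of Ore's Theorem; it has to be obtained intrinsically, by factoring the abstract group $H$ of presentation~\eqref{eq:hn} through $G(\Sigma_n)$ and invoking Proposition~\ref{pres_bn}.
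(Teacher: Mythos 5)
Your proof is correct, and its first half (cancellativity because $\Sigma_n$ sits inside a group, common left-multiples from Lemma~\ref{lem:ore}, hence an Ore monoid by Theorem~\ref{thm:ore}) is exactly the paper's argument. Where you diverge is in identifying $G(\Sigma_n)$ with $\mathcal{B}_{n+1}$: the paper disposes of this in one line, noting that $\Sigma_n$ embeds into $\mathcal{B}_{n+1}$ with image generating it as a group, which forces the induced map $G(\Sigma_n)\to\mathcal{B}_{n+1}$ to be an isomorphism. The implicit content there is the standard fact about Ore localization that every element of $G(\Sigma_n)$ is a single left fraction $a^{-1}b$ with $a,b\in\Sigma_n$, so injectivity of $\Sigma_n\hookrightarrow\mathcal{B}_{n+1}$ already gives injectivity of the induced map, and surjectivity comes from generation. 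You instead sandwich $G(\Sigma_n)$ between the presented group $H$ of~\eqref{eq:hn} and $\mathcal{B}_{n+1}$, and invoke Proposition~\ref{pres_bn} to conclude that both surjections are isomorphisms. Your route is slightly longer but avoids appealing to the fraction normal form in the Ore localization, at the cost of needing Proposition~\ref{pres_bn} (which the paper proves anyway) and the verification that the relations of~\eqref{eq:hn} hold in $\Sigma_n$; your worry that the ``presentation'' clause of Ore's Theorem is unavailable is well taken, but the paper sidesteps it without your detour by using the fraction description rather than any presentation of $\Sigma_n$. Both arguments are sound.
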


\begin{proof}
For the first statement, we need cancellativity and the existence of left-multiples. The last condition is given by Lemma~\ref{lem:ore}, while cancellativity immediately follows from the fact that $\Sigma_n$ is a submonoid of a group. The second statement follows, as $\Sigma_n$ embeds into $\mathcal{B}_{n+1}$, with image generating $\mathcal{B}_{n+1}$ as a group: this ensures that the induced map $G(\Sigma_n)\longrightarrow \mathcal{B}_{n+1}$ is an isomorphism. 
\end{proof}

\begin{rmq}\label{rmq_dehornoy} It was noticed by Dehornoy~\cite[Example~3.7]{subword} that the monoid $\mathcal{H}_3^+$ does not have lcm's (and the same holds for $n>3$). Indeed, both $\rho_1 \rho_2 \rho_1=\rho_2^2$ and $\rho_1 \rho_3 \rho_1=\rho_2 \rho_3$ are common right-multiples of $\rho_1$ and $\rho_2$, and it is straightforward to check that none of these two elements left-divides the other one. Similarly, in $\Sigma_n$, both $\sigma_1 \sigma_1 \sigma_2 \sigma_1$ and $\sigma_1 \sigma_1 \sigma_2 \sigma_3 \sigma_1$ are common right-multiples of $\sigma_1$ and $\sigma_1\sigma_2$, and it is clear that none of them left-divides the other one in $\Sigma_n$. This implies that neither $\Sigma_n$ nor $\mathcal{H}_n^+$ are Garside monoids. The answer to the second part of Question~\ref{q1} from the Introduction is therefore negative.   
\end{rmq}

Dehornoy also asked whether $\mathcal{H}_3^+$ is (right-)cancellative or not (see~\cite[Question~3.8]{subword}: note that Dehornoy works with the opposite monoids of $\mathcal{H}_3^+$ and $\Sigma_3$) and conjectured that this is the case. More precisely he conjectured that $\mathcal{H}_3^+\cong \Sigma_3$. We conjecture the following more general statement, which would also imply that $\Sigma_n$ admits a finite presentation (answering the first part of Question~\ref{q1}).

\begin{conjecture}\label{conj}
Let $n\geq 3$. Then 
\begin{enumerate}
\item The monoid~$\mathcal{H}_n^+$ is cancellative,
\item The monoid~$\mathcal{H}_n^+$ is isomorphic to $\Sigma_n$ via $\rho_i \mapsto \sigma_1 \sigma_2\cdots \sigma_i$. In particular, it embeds into $\mathcal{B}_{n+1}$, which is therefore isomorphic to its group of fractions. 
\end{enumerate} 
\end{conjecture}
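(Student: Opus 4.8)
The plan is to note first that, as already observed in the introduction, assertion~(2) is a consequence of~(1). Indeed, if $\mathcal{H}_n^+$ is cancellative, then by Lemma~\ref{lem:ore} any two of its elements admit a common left-multiple, so $\mathcal{H}_n^+$ is an Ore monoid (Theorem~\ref{thm:ore}) and embeds into its group of fractions $G(\mathcal{H}_n^+)$; by the last sentence of Theorem~\ref{thm:ore} together with Proposition~\ref{pres_bn}, the group $G(\mathcal{H}_n^+)$ is isomorphic to $\mathcal{B}_{n+1}$ in such a way that $\rho_i$ corresponds to $\sigma_1\sigma_2\cdots\sigma_i$, whence the image of the embedding $\mathcal{H}_n^+\hookrightarrow\mathcal{B}_{n+1}$ is exactly the submonoid $\Sigma_n$, and $\mathcal{H}_n^+\cong\Sigma_n$. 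As~(2) trivially implies~(1), the whole conjecture reduces to showing that $\mathcal{H}_n^+$ is cancellative, equivalently that the canonical surjection $\pi\colon\mathcal{H}_n^+\twoheadrightarrow\Sigma_n$ is injective, that is, that the relations~\eqref{eq:hn} generate all relations holding among $\sigma_1,\sigma_1\sigma_2,\dots,\sigma_1\sigma_2\cdots\sigma_n$ in $\mathcal{B}_{n+1}$. Since each generator $\rho_i$ maps to the positive braid word $\sigma_1\sigma_2\cdots\sigma_i$, one in fact has $\Sigma_n\subseteq\mathcal{B}_{n+1}^+$, so it would even suffice to prove that the morphism $\mathcal{H}_n^+\to\mathcal{B}_{n+1}^+$, $\rho_i\mapsto\sigma_1\sigma_2\cdots\sigma_i$, into the classical (hence well-understood) positive braid monoid is injective.

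\textbf{Attack through a convergent rewriting system.} To establish cancellativity I would try to build a terminating and confluent rewriting system presenting $\mathcal{H}_n^+$. One orients each defining relation as $\rho_1\rho_j\rho_i\longrightarrow\rho_{i+1}\rho_j$ for $1\le i<j\le n$; this rewriting terminates, since it preserves the $\lambda$-degree (the length function $\lambda(\rho_i)=i$ of Lemma~\ref{noeth} descends to $\mathcal{H}_n^+$, the new relations being still $\lambda$-homogeneous) while strictly decreasing the word length. One then runs Knuth--Bendix completion. The overlaps between left-hand sides occur exactly when the final letter $\rho_i$ of one instance is the initial letter $\rho_1$ of the next, i.e.\ when $i=1$, yielding words $\rho_1\rho_j\rho_1\rho_{j'}\rho_{i'}$; the left reduction gives $\rho_2\rho_j\rho_{j'}\rho_{i'}$, while the right one gives $\rho_1\rho_j\rho_{i'+1}\rho_{j'}$, hence, in subcases according to the sign of $i'+1-j$, either $\rho_{i'+2}\rho_j\rho_{j'}$ or an irreducible word, which are in general no longer joinable. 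One would add the corresponding new rules, conjecture that they organise into a recognisable (finitely parametrised) family closed under completion, and check confluence of the completed system. Its irreducible words then form a normal-form transversal for $\mathcal{H}_n^+$; cancellativity, and with it the whole conjecture, would follow. Alternatively, once such a normal form is available one could either check directly that $\pi$ is injective on normal words, or match the Hilbert series $\#(\mathcal{H}_n^+)_d=\#(\Sigma_n)_d$ (numbers of elements of $\lambda$-degree $d$) using the Garside monoid $M_n$, which surjects onto both $\mathcal{H}_n^+$ and $\Sigma_n$, together with the embedding $\Sigma_n\subseteq\mathcal{B}_{n+1}^+$.

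\textbf{An alternative, and the main obstacle.} A more classical route---the one followed by Dehornoy in~\cite{subword} in the case $n=3$---is subword reversing: as $\mathcal{H}_n^+$ has no lcm's (Remark~\ref{rmq_dehornoy}) right reversing cannot be complete, but one may still try to extract left-cancellativity from a partial reversing analysis, right-cancellativity symmetrically from the opposite monoid, and then combine this with the normal form above. In both strategies the difficulty is the same, and it is the main obstacle: the presentation~\eqref{eq:hn} is \emph{not} complete as it stands---the very first critical pair already forces a new relation---and it is unclear whether the completion process stabilises into a usable convergent system, or whether the emerging family of new rules stays under control. This is precisely why the statement is only conjectural, and why even the case $n=3$ explicitly asked about by Dehornoy in~\cite[Question~3.8]{subword} is still open.
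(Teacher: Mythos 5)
The statement you were asked to prove is labelled as a conjecture in the paper, and the paper does not prove it: the only thing it establishes is the equivalence of items (1) and (2), in the Remark immediately following Conjecture~\ref{conj}. Your opening reduction reproduces that remark faithfully and correctly --- $(2)\Rightarrow(1)$ because $\Sigma_n$ is a submonoid of a group, and $(1)\Rightarrow(2)$ via Lemma~\ref{lem:ore}, Ore's Theorem~\ref{thm:ore}, and Proposition~\ref{pres_bn} --- so on the part of the statement that is actually provable with the paper's tools, you and the author agree exactly. The remainder of your text (Knuth--Bendix completion of the oriented relations $\rho_1\rho_j\rho_i\to\rho_{i+1}\rho_j$, or a partial subword-reversing analysis) is a research programme rather than a proof, and you say so explicitly; your analysis of the critical pairs is sound as far as it goes (the only overlaps are indeed at $i=1$, and the two reducts $\rho_2\rho_j\rho_{j'}\rho_{i'}$ and $\rho_{i'+2}\rho_j\rho_{j'}$ are $\lambda$-homogeneous of the same degree but not joinable in general, so completion genuinely adds new rules). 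Since the conjecture remains open in the paper as well --- the author only records Dehornoy's Question~3.8 of~\cite{subword} for $n=3$ and offers no argument for cancellativity --- there is no gap in your submission \emph{relative to the paper}; but be aware that what you have written is a correct reduction plus an honest plan of attack, not a proof of the conjecture, and it should not be presented as more than that.
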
 

\begin{rmq}
Both items of the above conjecture are actually equivalent: clearly $(2)\Rightarrow(1)$ as $\Sigma_n$ is cancellative. Conversely, assume that $\mathcal{H}_{n}^+$ is cancellative. Then by Lemma~\ref{lem:ore} it is an Ore monoid, embedding into its group of fractions~$G(\mathcal{H}_n^+)$, and by Ore's Theorem~\ref{thm:ore}, we get that $G(\mathcal{H}_n^+)$ is isomorphic to the group with presentation~\ref{eq:hn}, which by Proposition~\ref{pres_bn} is isomorphic to $\mathcal{B}_{n+1}.$ The submonoid $\mathcal{H}_n^+$ of $\mathcal{B}_{n+1}$ then precisely corresponds under this isomorphism to the submonoid of $\mathcal{B}_{n+1}$ generated by $\sigma_1, \sigma_1\sigma_2, \dots, \sigma_1\sigma_2\cdots\sigma_n$, \emph{i.e.}, to $\Sigma_n$. 
\end{rmq}
 

\section{Related Garside structures on dihedral Artin--Tits groups of odd type}\label{sec:dihedral}

While the exotic Garside structure on $\mathcal{B}_3$ given by $\Sigma_2$ (see Example~\ref{ex_2}), which was generalized in the previous sections to the groups~$(G(M_n))_{n\geq 1}$, does not seem to generalize to Artin--Tits groups of type~$A_n$ for $n\geq 2$ (see the previous section), it is natural to wonder which Artin--Tits groups of spherical type (or more generally braid groups of complex reflection groups) admit a Garside structure analogous to the one introduced for $G(M_n)$. 

The case of dihedral Artin--Tits groups appears to us as the first family to consider, as they are the Artin--Tits groups with the most elementary structure, and $\mathcal{B}_3$ is an Artin--Tits group of dihedral type. The aim of this section is to show that dihedral Artin--Tits groups of odd type admit a Garside structure similar to the one obtained for $G(M_n)$. These Garside structures are presumably new. 

Let $m\geq 3$ be odd. Recall that the dihedral group $I_2(m)$ is generated by two simple reflections $s, t$ subject to the relations $s^2=1=t^2$ and the braid relation $\underbrace{st \cdots}_{m~\text{factors}} = \underbrace{ts \cdots}_{m~\text{factors}}$. The corresponding Artin--Tits group~$\mathcal{B}(I_2(m))$ is generated by $\sigma, \tau$, only subject to the braid relation of $I_2(m)$. Note that $\mathcal{B}(I_2(m))\cong G(2,m)$, the $(2,m)$-torus knot group. 

For $m$ an integer as above, we denote by $M(m)$ the monoid generated by two elements $\rho_1, \rho_2$, and subject to the relation~$\rho_1 \rho_2^{(m-1)/2} \rho_1 = \rho_2^{(m+1)/2}$. We denote by $B(m)$ the group defined by the same presentation. Note that $M(3)=M_2$. 

\begin{lemma}
The group $B(m)$ is isomorphic to the dihedral Artin--Tits group~$\mathcal{B}(I_2(m))$. 
\end{lemma}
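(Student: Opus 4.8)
The plan is to exhibit an explicit isomorphism $B(m) \cong \mathcal{B}(I_2(m))$ by writing down mutually inverse group homomorphisms, exactly in the spirit of Proposition~\ref{prop_isom_torus} and Proposition~\ref{pres_bn}. Write $k = (m-1)/2$, so the defining relation of $B(m)$ is $\rho_1 \rho_2^{k} \rho_1 = \rho_2^{k+1}$. On the side of $\mathcal{B}(I_2(m)) = \langle \sigma, \tau \mid \underbrace{\sigma\tau\cdots}_{m} = \underbrace{\tau\sigma\cdots}_{m}\rangle$, the natural guess — suggested by the case $m=3$, where $\rho_1 = \sigma_1$ and $\rho_2 = \sigma_1\sigma_2$ — is to send $\rho_1 \mapsto \sigma$ and $\rho_2 \mapsto \sigma\tau$. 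Conversely one sends $\sigma \mapsto \rho_1$ and $\tau \mapsto \rho_1^{-1}\rho_2$. These two assignments are visibly inverse to each other on generators, so the whole content is to check that each respects the defining relations.

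First I would check that $f(\rho_1) = \sigma$, $f(\rho_2) = \sigma\tau$ satisfy the relation of $B(m)$, i.e. that $\sigma(\sigma\tau)^{k}\sigma = (\sigma\tau)^{k+1}$ holds in $\mathcal{B}(I_2(m))$. Cancelling one factor of $\sigma\tau$ on the right (legitimate in a group), this is equivalent to $\sigma(\sigma\tau)^{k}\sigma = \sigma\tau(\sigma\tau)^{k}$, i.e. to $\sigma(\tau\sigma)^{k} = \tau(\sigma\tau)^{k}$ after cancelling $\sigma$ on the left and rearranging; and $\sigma(\tau\sigma)^{k} = \underbrace{\sigma\tau\sigma\cdots}_{2k+1} = \underbrace{\sigma\tau\sigma\cdots}_{m}$ while $\tau\sigma\tau\cdots$ ($m$ factors) $= \tau(\sigma\tau)^{k}$, so this is precisely the braid relation of $I_2(m)$. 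Hence $f$ extends to a homomorphism. Second, I would check that $g(\sigma) = \rho_1$, $g(\tau) = \rho_1^{-1}\rho_2$ satisfy the braid relation $\underbrace{g(\sigma)g(\tau)\cdots}_{m} = \underbrace{g(\tau)g(\sigma)\cdots}_{m}$: here $g(\sigma)g(\tau) = \rho_1\rho_1^{-1}\rho_2 = \rho_2$ and $g(\tau)g(\sigma) = \rho_1^{-1}\rho_2\rho_1$, so the left-hand word (starting with $\sigma$, length $m = 2k+1$) telescopes to $\rho_2^{k}\rho_1$, and the right-hand word (starting with $\tau$, length $2k+1$) telescopes to $\rho_1^{-1}\rho_2^{k+1}$; the required equality $\rho_2^{k}\rho_1 = \rho_1^{-1}\rho_2^{k+1}$ is just the defining relation $\rho_1\rho_2^{k}\rho_1 = \rho_2^{k+1}$ of $B(m)$ after multiplying by $\rho_1$ on the left. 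Thus $g$ extends to a homomorphism, $f$ and $g$ are mutually inverse, and $B(m) \cong \mathcal{B}(I_2(m)) \cong G(2,m)$.

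There is no real obstacle here: the statement is an elementary presentation manipulation, and the only thing to be careful about is the telescoping/parity bookkeeping in the length-$m$ alternating words (handling the fact that $m$ is odd so that both alternating words of length $m$ begin and end with the "outer" generators in a compatible way). The one point worth a sentence of justification is that it suffices to verify the relations on generators because we are working with group presentations, so a relation-preserving assignment on generators automatically extends to a homomorphism. I would present the two computations as short displayed equalities and conclude.
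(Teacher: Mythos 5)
Your proof is correct and uses exactly the same isomorphism $\rho_1\mapsto\sigma$, $\rho_2\mapsto\sigma\tau$ that the paper states (the paper simply declares the verification ``straightforward''); your telescoping computations for both directions check out, including the parity bookkeeping for odd $m$. No issues.
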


\begin{proof}
It is straightforward to check that an isomorphism is given by $\rho_1 \mapsto \sigma$, $\rho_2 \mapsto \sigma \tau$. 
\end{proof}

Note that $M(m)$ is cancellative, as divisibility is Noetherian (since the defining relation is homogeneous with $\lambda(\rho_1)=1$ and $\lambda(\rho_2)=2$) and $M(m)$ is generated by two elements $\rho_1$, $\rho_2$ with a single relation of the form $\rho_1 \cdots = \rho_2 \cdots$, hence the defining presentation is right-complemented and the $\theta$-cube condition (Definition~\ref{def_cube}) is vacuously true for triples of distinct generators.  

Setting $\Delta:=\rho_2^{m}$, the following Lemma is the analogue for $M(m)$ of Proposition~\ref{garside_divisors} established in the case of $M_n$:

\begin{lemma}\label{garside_dihedral}
The following holds in $M(m)$: 
\begin{enumerate}
\item We have $(\rho_1 \rho_2^{(m-1)/2})^2=(\rho_2^{(m-1)/2} \rho_1)^2=\Delta$.
\item Let $a_1:= \rho_2^{(m-1)/2} \rho_1 \rho_2^{(m-1)/2}$. Then $\rho_1 a_1=a_1 \rho_1=\Delta$. In particular, both generators $\rho_1$ and $\rho_2$ are are left- and right-divisors of $\Delta$ (and the left- and right-complements of a given generator coincide). 
\item Let $a,b\in M(m)$ such that $ab=\Delta$. Then $ba=\Delta$. 
\end{enumerate}
\end{lemma}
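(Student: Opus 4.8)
The plan is to mimic, in the simpler dihedral setting, the proof of Proposition~\ref{garside_divisors}. Everything reduces to finding a single ``commutation-type'' identity playing the role of~\eqref{eq_move}, namely a way to push a $\rho_1$ past a block of $\rho_2$'s at the cost of turning the block into a shorter one. Concretely, the defining relation $\rho_1 \rho_2^{(m-1)/2}\rho_1=\rho_2^{(m+1)/2}$ can be read as $\rho_1\rho_2^{(m-1)/2}\rho_1=\rho_2\cdot\rho_2^{(m-1)/2}$, i.e.\ $\rho_1(\rho_2^{(m-1)/2}\rho_1)=\rho_2(\rho_2^{(m-1)/2})$; this is the analogue of $\rho_1\rho_n\rho_1=\rho_2\rho_n$ and will be the only tool needed.

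For part~(1), set $k:=(m-1)/2$ so that $m=2k+1$. First I would show $\rho_1(\rho_2^k\rho_1)=\rho_2^{k+1}$ (the defining relation) and, symmetrically, $(\rho_1\rho_2^k)\rho_1=\rho_2^{k+1}$ — these are literally the same relation. Then compute
\[(\rho_1\rho_2^k)^2=\rho_1\rho_2^k\rho_1\rho_2^k=\rho_2^{k+1}\rho_2^k=\rho_2^{2k+1}=\rho_2^m=\Delta,\]
and likewise $(\rho_2^k\rho_1)^2=\rho_2^k(\rho_1\rho_2^k\rho_1)=\rho_2^k\rho_2^{k+1}=\rho_2^m=\Delta$. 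This is immediate and requires no induction, unlike the $M_n$ case. For part~(2), with $a_1:=\rho_2^k\rho_1\rho_2^k$ one has $\rho_1 a_1=(\rho_1\rho_2^k\rho_1)\rho_2^k=\rho_2^{k+1}\rho_2^k=\Delta$ and $a_1\rho_1=\rho_2^k(\rho_1\rho_2^k\rho_1)=\rho_2^k\rho_2^{k+1}=\Delta$; and of course $\rho_2\cdot\rho_2^{m-1}=\rho_2^{m-1}\cdot\rho_2=\Delta$ trivially. Hence each generator is both a left- and a right-divisor of $\Delta$ with the two complements agreeing, which also shows $\Delta$ is central since it commutes with a generating set of $M(m)$ (here one uses that $M(m)$ is cancellative, already noted before the lemma).

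For part~(3), the argument is verbatim the one from Proposition~\ref{garside_divisors}(3): cancellativity of $M(m)$ together with the fact, just proved in~(2), that the ``$ab=\Delta\Rightarrow ba=\Delta$'' property holds for the generating set $\{\rho_1,\rho_2\}$, propagates to all of $M(m)$ by an easy induction on word length. Indeed, if $ab=\Delta$ with $a=a's$ for a generator $s$, then $s$ right-divides $\Delta$, so $\Delta=(\Delta s^{-1})s$ in the group of fractions and by~(2) $\Delta=s(\Delta s^{-1})$, so $s(\Delta s^{-1})=\Delta=a'sb$, giving $\Delta s^{-1}=a' s b$ wait — more cleanly: from $a's b=\Delta=s c$ with $c:=\Delta s^{-1}\in M(m)$ one would bootstrap; I would instead just follow the exact phrasing of the earlier proof, which says the property ``holds for the set $\mathcal S$ which generates $M_n$'' and concludes ``by cancellativity''. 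There is no genuine obstacle here: the odd dihedral case is strictly easier than $M_n$ because the cube condition is vacuous and the basic commutation identity is the defining relation itself, so the only thing to be careful about is bookkeeping the exponents $k=(m-1)/2$ and $k+1=(m+1)/2$ and summing to $2k+1=m$.
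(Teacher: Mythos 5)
Your proof is correct and follows essentially the same route as the paper's (which is much terser): parts (1) and (2) are direct computations from the single defining relation $\rho_1\rho_2^{(m-1)/2}\rho_1=\rho_2^{(m+1)/2}$, and part (3) is the same cancellativity-plus-generators argument as in Proposition~\ref{garside_divisors}(3). The brief false start in your part (3) is harmless, since you correctly fall back on that argument, which does propagate from the generating set by induction on word length using left-cancellativity.
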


\begin{proof}
The first claim is an immediate consequence of the defining relation of $M(m)$. The second claim follows immediately from the first one. The last claim is a consequence of the cancellativity of $M(m)$ and the second claim, as the claimed property holds for $\rho_1$ and $\rho_2$ (recall that $\Delta$ is a power of $\rho_2$), which generate $M(m)$. 
\end{proof}

\begin{prop}
The pair $(M(m), \Delta)$ is a Garside monoid. The corresponding Garside group is $B(m)$. 
\end{prop}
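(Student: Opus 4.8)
The plan is to verify the five axioms of Definition~\ref{def_garside} for the pair $(M(m),\Delta)$, following exactly the template used for $M_n$ in the proof of Theorem~\ref{garside_gn}, and then to identify the group of fractions. Most of the work has in fact already been done: the discussion preceding Lemma~\ref{garside_dihedral} records that $M(m)$ is cancellative (via Noetherian divisibility and the vacuous $\theta$-cube condition, since the right-complemented defining presentation has only two generators and one relation, so there are no triples of pairwise distinct generators to check), and Proposition~\ref{cancellative_criterion} simultaneously gives that $M(m)$ admits conditional right-lcms; the symmetric argument applied to $M(m)^{\mathrm{op}}$ (whose defining presentation is again right-complemented with two generators and one relation) gives right-cancellativity and conditional left-lcms. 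Noetherianity of divisibility is explicit from the homogeneous length function $\lambda(\rho_1)=1$, $\lambda(\rho_2)=2$.

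Next I would invoke Lemma~\ref{garside_dihedral}: parts $(1)$ and $(2)$ show that both generators $\rho_1,\rho_2$ are left- and right-divisors of $\Delta=\rho_2^m$, so the divisors of $\Delta$ generate $M(m)$; part $(3)$ shows that the sets of left- and right-divisors of $\Delta$ coincide. Finiteness of $\Div(\Delta)$ is immediate from Noetherianity together with the fact that $\Delta$ has finite length and $M(m)$ is finitely generated (every divisor of $\Delta$ has length at most $\lambda(\Delta)=m$, and there are only finitely many elements of bounded length since $\lambda$ is additive and each generator has positive length). Thus $\Delta$ is a Garside element in the sense following Lemma~\ref{cond_delta_lcm}, and applying that lemma upgrades the conditional lcms to genuine left- and right-lcms; combined with cancellativity and Lemma~\ref{lemma_gcd} this yields left- and right-gcds, so all five conditions of Definition~\ref{def_garside} hold.

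Finally, for the identification of the Garside group: by Ore's Theorem~\ref{thm:ore}, $G(M(m))$ has the same presentation as $M(m)$, namely $\langle \rho_1,\rho_2 \mid \rho_1 \rho_2^{(m-1)/2}\rho_1=\rho_2^{(m+1)/2}\rangle$, which by the Lemma just above (the one showing $B(m)\cong\mathcal{B}(I_2(m))$) is precisely $B(m)\cong\mathcal{B}(I_2(m))$.

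I do not expect any genuine obstacle here: the argument is entirely parallel to the $M_n$ case, and in fact \emph{simpler}, because with only two atoms the cube condition is vacuous, so no case analysis analogous to Lemmata~\ref{cube_right} and~\ref{cube_right_bis} is needed. The only point requiring minor care is making sure the opposite presentation is still right-complemented (it is, being $\langle \rho_1,\rho_2 \mid \rho_1 \rho_2^{(m-1)/2}\rho_1 = \rho_2^{(m+1)/2}\rangle$ read backwards, i.e. $\rho_1 \rho_2^{(m-1)/2}\rho_1 = \rho_2^{(m+1)/2}$ again up to relabelling — a single relation of the form $\rho_1\cdots = \rho_2\cdots$), and confirming the finiteness of $\Div(\Delta)$, both of which are routine.
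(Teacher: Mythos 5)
Your proposal is correct and follows essentially the same route as the paper, which simply says the proof is "exactly the same as for $G(M_n)$": Noetherianity plus the vacuously true cube condition give cancellativity and conditional lcms, Lemma~\ref{garside_dihedral} supplies the Garside element, Lemma~\ref{cond_delta_lcm} upgrades to genuine lcms, and Ore's Theorem identifies the group of fractions with $B(m)$. Your extra observations (the opposite presentation being right-complemented since the left-hand word is a palindrome, and the finiteness of $\Div(\Delta)$ via the length function) are exactly the routine checks the paper leaves implicit.
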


\begin{proof}
The proof is exactly the same as for $G(M_n)$ (Theorem~\ref{garside_gn}): as noted above, the divisibility in $M(m)$ is Noetherian and the $\theta$-cube condition is vacuously true, hence we have cancellativity and the existence of conditional lcm's in $M(m)$. By Lemma~\ref{garside_dihedral} above, the element~$\Delta$ is a Garside element in $M(m)$, and we then conclude the proof by applying the same arguments as for $G(M_n)$. 
\end{proof}

Of course, adding the relation~$\rho_1^2=1$ to the presentation of $B(m)$ yields a presentation of the dihedral group~$I_2(m)$, as there is only one conjugacy class of reflections in $I_2(m)$.  

\begin{rmq}
The dihedral Artin--Tits groups of even type do not seem to admit a similar description. Indeed, let $B=\mathcal{B}(I_2(4))=\mathcal{B}(B_2)$ be the Artin--Tits group of type~$B_2$, with standard generators~$\sigma_1, \sigma_2$ and braid relation~$\sigma_1 \sigma_2 \sigma_1 \sigma_2=\sigma_2 \sigma_1 \sigma_2 \sigma_1$. Then setting $\rho_1=\sigma_1$, $\rho_2=\sigma_1 \sigma_2$, we get a presentation for $B$ by taking this new set of generators and the relation $\rho_1 \rho_2^2=\rho_2^2 \rho_1$. This appears to us as the natural analogue of the presentations considered in the odd case but in the present case, the monoid generated by $\rho_1$ and $\rho_2$ and subject to the above relation is \textit{not} a Garside monoid: indeed, if it was, then the Garside element $\Delta$ would have a power which is central. Since the center of $B$ is infinite cyclic generated by $(\sigma_1 \sigma_2)^2=\rho_2^2$, it is clear from the above defining relation that $\Delta$ itself would have to be a power of $\rho_2$ as $\rho_1$'s cannot be eliminated using the unique defining relation, say $\Delta=\rho_2^m$. But then $\rho_1$ could not divide $\Delta$ as no relation can be applied to the word $\rho_2^m$, a contradiction. 
\end{rmq}

As a concluding remark, let us note the following. We introduced several monoids in this paper, which either are Garside monoids (like $M_n$ and $M(m)$), or closely related to a Garside monoid (like $\mathcal{H}_n^+$). All of them are defined by the same kind of presentations. The corresponding groups of fractions are braid groups of real or complex reflection groups in several cases, and presentations for these reflection groups can be naturally derived from those of the corresponding monoids (as done in Corollaries~\ref{cor_g12}, \ref{prop_sym_pres} and Remark~\ref{rmq_complex_interval}). This covers the following cases: $G_4$, $G_{8}$, $G_{16}$, $G_{12}$, $\mathfrak{S}_n$ for all~$n$, and $I_2(m)$ for odd~$m$. All these groups have a single conjugacy class of reflections, while the dihedral groups of even type like~$I_2(4)$, for which the above remark shows that there does not seem to exist a Garside monoid similar to the ones introduced in this paper, have two conjugacy classes of reflections. While we do not have any general statement at the moment, it would be interesting to investigate whether reflection groups with a single conjugacy class of reflections, and their braid groups, admit presentations and monoids similar to those introduced in this work.


\begin{thebibliography}{10}

\bibitem{Artin} E.~Artin, \textsl{Theorie der Z\"opfe},
Abh. Math. Sem. Univ. Hamburg {\bf 4} (1925), no. 1, 47-72. 

\bibitem{Dual} D.~Bessis, \textsl{The dual braid monoid}, Ann. Sci. \'{E}cole Norm. Sup. {\bf 36} (2003), 647-683.

\bibitem{Bessis_free} D.~Bessis, \textsl{A dual braid monoid for the free group}, J. Algebra {\bf 302} (2006), 275-309.

\bibitem{Dualcplx} D.~Bessis, \textsl{Finite complex reflection arrangements are $K(\pi,1)$}, Annals of Math. {\bf 181} (2015), Issue 1, 55-69.

\bibitem{BDM} D.~Bessis, F.~Digne, and J.~Michel, \textsl{Springer theory
in braid groups and the Birman--Ko--Lee monoid}, \textsl{Pacific J.\ Math.\  } {\bf 205} (2002),
287--309.     

\bibitem{BB} J.~Birman and T.~Brendle, \textsl{Braids: a survey}, Handbook of Knot Theory, 19-103, Elsevier B.V., Amsterdam, 2005. 

\bibitem{BKL} J.~Birman, K.H.~Ko, and S.J.~Lee, \textsl{A New Approach to
the Word and Conjugacy Problems in the Braid Groups},  Adv.\ in Math.\ 
{\bf 139} (1998), 322--353.

\bibitem{BT} C.~Brav and H.~Thomas, \textsl{Braid groups and Kleinian singularities}, Math. Ann. {\bf 351} (2011), no. 4, 1005-1017.

\bibitem{BS} E.~Brieskorn and K.~Saito, \textsl{Artin-Gruppen und Coxeter-Gruppen}, Invent. Math. {\bf 17} (1972), 245-271. 

\bibitem{BMR} M.~Broué, G.~Malle, and R.~Rouquier, \textsl{Complex reflection groups, braid groups, Hecke algebras}, J. Reine Angew. Math. {\bf 500} (1998), 127-190.

\bibitem{CP} A.H.~Clifford and G.B.~Preston, \textsl{The algebraic theory of semigroups}, Vol. II, Mathematical Surveys, No. 7, American Mathematical Society, Providence, R.I., 1967.  

\bibitem{CLL} R.~Corran, E.-K.~Lee, and S.-J.~Lee, \textsl{Braid groups of imprimitive complex reflection groups}, J. Algebra {\bf 427} (2015), 387-425.

\bibitem{Coxeter} H.S.M.~Coxeter, \textsl{Factor groups of the braid group}, Proc. 4th Canad. Math. Cong. (1959) 95-122.

\bibitem{dehornoy_garside} P.~Dehornoy, \textsl{Groupes de Garside}, Ann. Sci. \'{E}cole Norm. Sup. (4) {\bf 35} (2002), no. 2, 267-306.

\bibitem{subword} P.~Dehornoy, \textsl{The subword reversing method}, Internat. J. Algebra Comput. {\bf 21} (2011), no. 1-2, 71-118.

\bibitem{dehornoy_monoids} P.~Dehornoy, \textsl{A cancellativity criterion for presented monoids}, Semigroup Forum {\bf 99} (2019), no. 2, 368-390. 

\bibitem{Garside}
P.~Dehornoy, F.~Digne, D.~Krammer, E.~Godelle, and J.~Michel.
\textsl{Foundations of Garside theory}, Tracts in Mathematics {\bf 22}, Europ.\ Math.\ Soc.\
(2015).

\bibitem{DP}  P.~Dehornoy and L.~Paris, \textsl{Gaussian groups and Garside groups, two generalisations of Artin groups}, Proc. London Math. Soc. (3) {\bf 7}9 (1999), no. 3, 569-604.

\bibitem{Del} P.~Deligne, \textsl{Les immeubles des groupes de tresses g\'en\'eralis\'es}, Invent. Math. {\bf 17} (1972), 273-302.

\bibitem{Dig} F.~Digne, \textsl{Pr\'esentations duales des groupes de tresses de type affine $\tilde{A}$}, Comment. Math. Helv. {\bf 81} (2006), no. 1, 23-47.

\bibitem{Dig1} F.~Digne, \textsl{A Garside presentation for Artin groups of type $\tilde{C}_n$}, Ann. Inst. Fourier {\bf 62} (2012), no. 2, 641-666.

\bibitem{garside_69} F.A.~Garside, \textsl{The braid group and other groups}, 
Quart. J. Math. Oxford Ser. {\bf 20} (1969), no. 2, 235–254. 

\bibitem{Jensen} L.T.~Jensen, \textsl{The 2-braid group and Garside normal form}, Math. Z. {\bf 286} (2017), no. 1-2, 491-520.

\bibitem{KT} C.~Kassel and V.~Turaev, \textsl{Braid groups}, Graduate Texts in Mathematics, 247. Springer, New York, 2008.

\bibitem{Krammer} D.~Krammer, \textsl{Braid groups are linear}, Ann. of Math. (2) {\bf 155} (2002), no. 1, 131-156.

\bibitem{LQ}  T.~Licata and H.~Queffelec, \textsl{Braid groups of type ADE, Garside structures, and the categorified root lattice}, preprint (2017), \url{https://arxiv.org/pdf/1703.06011.pdf}.

\bibitem{Picantin} M.~Picantin, \textsl{Petits groupes gaussiens}, PhD Thesis, Universit\'e de Caen, 2000.

\bibitem{Picantin_torus} M.~Picantin, \textsl{Automatic structures for torus link groups}, J. Knot Theory Ramifications {\bf 12} (2003), no. 6, 833-866.

\bibitem{Rolfsen} D.~Rolfsen, \textsl{Knots and links}, Mathematics Lecture Series, No. 7. Publish or Perish, Inc., Berkeley, Calif., 1976.  439 pp.

\end{thebibliography}
\end{document}